\date{}
\begin{document}

\author{Chloé Perin and Rizos Sklinos}
\title{Homogeneity in the free group}
\maketitle

\begin{abstract} We show that any non abelian free group $\F$ is strongly $\aleph_0$-homogeneous, i.e. that finite tuples of elements which satisfy the same first-order properties are in the same orbit under $\Aut(\F)$. We give a characterization of elements in finitely generated groups which have the same first-order properties as a primitive element of the free group. We deduce as a consequence that most hyperbolic surface groups are not $\aleph_0$-homogeneous.
\end{abstract}

\section{Introduction}

Since the works of Sela \cite{Sel1}-\cite{Sel6} and Kharlampovich-Myasnikov \cite{KharlampovichMyasnikov} on Tarski's problem, which show that finitely generated free groups of rank at least $2$ all have the same first-order theory, there has been renewed interest in model-theoretic questions about free groups. With Sela's work, it has become clear that techniques of geometric group theory provide extremely effective ways to tackle these questions. This can be seen in subsequent results such as \cite{SelaStability}, where Sela shows that the theory of the free group is stable, or in \cite{PerinElementary}, where elementary subgroups of a free group of finite rank are shown to be exactly its free factors. Moreover, the geometric nature of the tools often allows these results to be generalized to the class of torsion-free hyperbolic groups (see \cite{Sel7}). In this paper, we apply these techniques to study types of elements and homogeneity in torsion-free hyperbolic groups, and more particularly in free and surface groups. 

The type of a tuple $\bar{a}$ in a structure is the set of all first-order formulas it realizes. More formally, let $\mathcal{L}$ be a first-order language, $\mathcal{M}$ an $\mathcal{L}$-structure, and $B$ a subset of $\mathcal{M}$ (for basic definitions of model theory, the reader is referred to \cite{ChangKiesler, MarkerModelTheory}, or to the short survey given in \cite{Chatzidakis}). A $k$-type over $B$ is a consistent set of first-order formulas $\phi(\bar{x})= \phi(x_1, \ldots, x_k)$ with $k$ free variables and parameters from $B$. We say that a $k$-type $p(\bar{x})$ over $B$ is complete if for every formula $\phi(\bar{x})$ over $B$, either $\phi(\bar{x})$ or $\lnot\phi(\bar{x})$ is in $p(\bar{x})$. Now if $\bar{a}$ is a $k$-tuple of elements of $\mathcal{M}$, the set $tp^{\mathcal{M}}(\bar{a}/B)$ of all formulas with $k$ free variables and parameters in $B$ that $\bar{a}$ realizes in $\mathcal{M}$ is a complete $k$-type $p(\bar{x})$. However, a complete type need not be of this form (we then say the type is not realized in $\mathcal{M}$). From now on by a type we mean a complete type. 

An $\mathcal{L}$-structure $\mathcal{M}$ is strongly $\aleph_0$-homogeneous if for any pair of finite tuples $\bar{a}, \bar{a}'$ with $\tp^{\mathcal{M}}(\bar{a})=\tp^{\mathcal{M}}(\bar{a}')$, there is an automorphism of $\mathcal{M}$ sending $\bar{a}$ to $\bar{a}'$. 
For more general definitions, see Section \ref{BasicsHomogeneitySec}. In the rest of this paper, the structures we will consider are groups and $\mathcal{L}=\{\cdot,^{-1},1\}$ is the language of groups.

Nies proved in \cite{NiesF2} that the free group in two generators $\F_2$ is strongly $\aleph_0$-homogeneous, but the question remained opened for free groups of higher rank. As for many reasons $\F_2$ is a special case, it was clear that the techniques used in \cite{NiesF2} do not apply to higher rank free groups.
The main result of this paper is 
\begin{thm} \label{MainResultIntro} Non abelian free groups are strongly $\aleph_0$-homogeneous.
\end{thm}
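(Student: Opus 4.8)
The plan is to reduce strong $\aleph_0$-homogeneity of a free group $\F$ of finite rank to a statement about when two tuples having the same type are related by an automorphism, and to attack that via Sela's theory of test sequences, the shortening argument, and the analysis of the JSJ decomposition of $\F$ relative to a tuple. The starting observation is that since types are preserved by $\Aut(\F)$, and since in a stable theory (the theory of $\F$ is stable by Sela) types have a rich structure, one should not expect two arbitrary tuples with the same type to be automorphic for free — one needs to genuinely produce the automorphism. For infinite-rank free groups the statement will follow formally from the finite-rank case by a standard back-and-forth/compactness argument (any finite tuple lies in a finitely generated free factor, which is elementary by \cite{PerinElementary}), so the real content is: if $\bar a, \bar a'$ are finite tuples in $\F_n$ with $\tp^{\F_n}(\bar a) = \tp^{\F_n}(\bar a')$, then some $\sigma \in \Aut(\F_n)$ sends $\bar a$ to $\bar a'$.

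First I would set up the algebraic geometry: associate to $\bar a$ its "type-defining" data, i.e. the maximal quotient of $\F_n$ in which $\bar a$ still realizes the same type of first-order formulas, encoded through systems of equations and inequations over $\F$. The key geometric tool is the analysis, following Sela (or Kharlampovich--Myasnikov), of the set of solutions to the system of equations satisfied by $\bar a$ — organized via Makanin--Razborov diagrams and the strict resolutions / NTQ (or "$\omega$-residually free tower") structure. Having the same type should force $\bar a$ and $\bar a'$ to have, in a suitable sense, the same Makanin--Razborov diagram and in particular to lie on the same "strict" resolution. Then the core of the argument is the \emph{shortening argument}: using the action of $\F_n$ on the Bass--Serre tree of the cyclic JSJ decomposition of $\F_n$ relative to (the subgroup generated by) $\bar a$, together with modular automorphisms, one shows that any two tuples that are "generic points" of the same strict resolution can be moved to one another by a composition of modular automorphisms of the relevant rigid/QH vertex groups, i.e. by an element of $\Aut(\F_n)$. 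One typically packages this by taking a suitable sequence of automorphisms, extracting a limiting action on an $\R$-tree, and appealing to Rips theory to decompose it, iterating down the resolution.

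The main obstacle, I expect, is precisely showing that equality of first-order types is strong enough to guarantee that $\bar a$ and $\bar a'$ share all this geometric structure — in particular that the type "sees" the JSJ decomposition relative to $\bar a$ and the modular group, so that the shortening/modular moves one performs on the $\bar a$-side can be matched on the $\bar a'$-side. The delicate point is that modular automorphisms are not inner, so one must certify that the formulas encoding "there exists a solution with such-and-such combinatorial/geometric position relative to the JSJ" are genuinely first-order and lie in the common type; this is where one needs the definability results coming out of Sela's work (e.g. that the relevant subsets of the solution variety are definable, that one can express membership in a given resolution), and where stability — via, e.g., the finiteness of the number of relevant resolutions and a careful compactness argument — is used to convert the (a priori infinite) combinatorial data into something expressible by a single formula in the type. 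Once this "the type determines the relative JSJ and the resolution" step is in hand, the construction of $\sigma$ is a (substantial but) by-now-standard application of the machinery.

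Finally, I would remark that the same circle of ideas yields the characterization of elements with the same type as a primitive element: an element $u$ of a finitely generated group $G$ has the same first-order theory (in $G$) as a primitive element of $\F$ essentially iff $u$ is primitive in a free factor of $G$ of the appropriate form — again detected through the resolution / tower structure — and from this one reads off that for most hyperbolic surface groups $S$ the canonical generators are not in the same $\Aut(S)$-orbit as elements with the same type, so $S$ fails $\aleph_0$-homogeneity. But for Theorem~\ref{MainResultIntro} itself the heart is the three-step scheme above: (1) reduce to finite rank, (2) show equal types $\Rightarrow$ same strict resolution / relative JSJ, (3) run the shortening argument with modular automorphisms to build $\sigma \in \Aut(\F_n)$ with $\sigma(\bar a) = \bar a'$.
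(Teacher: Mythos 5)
Your reduction to finite rank is fine (the paper does essentially that, via unions of elementary chains and the fact that a countable $\aleph_0$-homogeneous structure is strongly $\aleph_0$-homogeneous), but the finite-rank core of your sketch has a genuine gap, and it is located exactly at your step (2): the claim that equality of types forces $\bar a$ and $\bar a'$ to have ``the same strict resolution / the same relative JSJ'' is neither justified nor justifiable by the means you invoke, and the paper never proves (nor needs) any statement of that kind. The type of a tuple does not directly ``see'' the JSJ of $\F_n$ relative to the tuple or a Makanin--Razborov resolution, and no definability result in Sela's work gives you this; moreover the shortening argument does not, by itself, produce an automorphism carrying $\bar a$ to $\bar a'$ — it produces finiteness statements (factor sets, finitely many short injective maps) which must then be converted into first-order sentences. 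A decisive sanity check that something essential is missing: nothing in your steps (2)--(3) uses anything specific to free groups. Surface groups are torsion-free hyperbolic, elementarily equivalent to $\F$, stable, and have cyclic JSJ decompositions, modular groups and the full shortening machinery; so your argument, as sketched, would equally ``prove'' that hyperbolic surface groups are strongly $\aleph_0$-homogeneous — which is false, and is in fact disproved in this very paper.

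What the paper actually does is a dichotomy rather than a matching of resolutions. Given $\tp^{\F_n}(\bar u)=\tp^{\F_n}(\bar v)$, one works with the smallest free factors $U$, $V$ containing the tuples. Either there are embeddings $U\hookrightarrow \F_n$ sending $\bar u\mapsto\bar v$ and $V\hookrightarrow \F_n$ sending $\bar v\mapsto\bar u$; then the composition is injective and fixes $\bar u$, so the relative co-Hopf property (Theorem \ref{RelativeCoHopf}) makes it an automorphism of $U$, and Grushko extends the resulting isomorphism $U\to V$ to an automorphism of $\F_n$. Or no such embedding exists in one direction; then the factor-set theorem (Theorem \ref{FactorSet}) says every morphism sending $\bar u\mapsto\bar v$ factors, after precomposing by a modular automorphism, through one of finitely many proper quotients, and — this is the key trick, Proposition \ref{MainResultBis} — this statement is expressible by a first-order sentence using the formula $\Rel$ encoding $\Lambda$-relatedness. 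Transporting that sentence through the common type and evaluating it at the identity embedding yields a non-injective preretraction $U\to\F_n$, hence (Propositions \ref{RETRACTION} and \ref{RETRACTIONPLUS}) a hyperbolic floor structure of $\F_n$ over a proper subgroup. The proof then closes by the one fact that is special to free groups and that your sketch never uses: free groups admit no hyperbolic floor structure over a proper subgroup (Proposition \ref{NoHypFloorInFreeGroup}). It is precisely this last step that fails for surface groups, which is why they are not homogeneous while $\F_n$ is; any correct proof must isolate such an ingredient, and your proposal currently has no place where it could enter.
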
 

In fact, for free groups of finite rank we prove a stronger result: we show that if two tuples have the same type over a set of parameters $B$, there is an automorphism which sends one to the other and fixes $B$ pointwise.

Note that in \cite{SklinosGenericType}, it was shown that free groups of uncountable rank are not 
$\aleph_1$-homogeneous, so our result completes the picture of the homogeneity spectrum of the free groups.

In a stable group, there is a distinguished kind of types called the generic types (see Section \ref{ModelTheorySec}). 
Following Sela's result on the stability of the theory of finitely generated free groups, 
Pillay showed in \cite{PillayForking} that the theory of the free group admits a unique generic type $p_0$, 
and in \cite{PillayGenericity} that the elements realizing it are exactly the primitive elements 
(i.e. elements which belong to a basis). The type of primitive elements is thus of particular interest to model theorists, and much work has been done towards understanding it (see \cite{PillayForking}, \cite{PillayGenericity}, \cite{SklinosGenericType}). 

We generalize Pillay's result by giving a characterization of elements of a finitely generated group $G$ whose type is $p_0$ in $G$ (note that if such elements exist $G$ must be elementarily equivalent to the free group).

\begin{prop}\label{ElementsOfPrimitiveTypeIntro} Let $G$ be a finitely generated group. Let $u$ be an element of $G$. The type of $u$ in $G$ is $p_0$ if and only if $G$ admits a structure of hyperbolic tower over $\langle u \rangle$. 
\end{prop}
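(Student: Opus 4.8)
The plan is to prove the two implications separately; in both, the key device is to pass to a free-product extension by $\Z$ so as to replace the cyclic base $\langle u\rangle$ by a non-cyclic one, which lets us invoke the structure theory of hyperbolic towers (Sela's theorem that a hyperbolic tower over a non-cyclic subgroup is an elementary extension of it), Perin's characterization of elementary subgroups of torsion-free hyperbolic groups, and Pillay's identification of $p_0$ with the type of primitive elements.

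Suppose first that $G$ carries a hyperbolic tower structure over $\langle u\rangle$; the base being cyclic, this tower has positive height, and in particular $G$ is non-abelian. Attaching a free factor $\langle t\rangle\cong\Z$ both at the bottom of the tower and at the top, one sees that $G\ast\langle t\rangle$ is a hyperbolic tower over the non-cyclic free group $\langle u\rangle\ast\langle t\rangle\cong\F_2$, and --- through a single free-factor floor --- also a hyperbolic tower over the (non-cyclic) group $G$. Sela's theorem then yields $\langle u,t\rangle\preceq G\ast\langle t\rangle$ and $G\preceq G\ast\langle t\rangle$. As $u$ is a primitive element of $\F_2=\langle u,t\rangle$, it realizes $p_0$ there by Pillay's theorem (the generic type being common to all the $\F_n$), so $\tp^{G\ast\langle t\rangle}(u)=p_0$ by the first elementary extension and $\tp^{G}(u)=p_0$ by the second.

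Conversely, assume $\tp^{G}(u)=p_0$. Then the finitely generated group $G$ is elementarily equivalent to $\F$, hence already carries \emph{some} hyperbolic tower structure over a free group, and the problem is to produce one whose base is exactly $\langle u\rangle$. I would pass to $\hat G = G\ast\langle t\rangle$, where $u$ still realizes $p_0$ since $G\preceq\hat G$, and aim to find $t'\in\hat G$ such that $\langle u,t'\rangle$ is a free group of rank two which is elementarily embedded in $\hat G$. This is where the geometric analysis of the generic type is needed: one wants a companion realization $t'$ of $p_0$ forking-independent from $u$, together with the fact that an independent pair of realizations of the generic type generates a free factor; upgrading this to an honest free splitting of $\hat G$ admitting a retraction onto $\langle u,t'\rangle$ calls on Sela's machinery (limit groups, the JSJ of $\hat G$ relative to $\langle u\rangle$, the shortening argument). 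Once $\langle u,t'\rangle\cong\F_2$ is seen to be an elementary subgroup, Perin's theorem makes $\hat G$ a hyperbolic tower over it; aligning the free factor $\langle t\rangle$ of $\hat G=G\ast\langle t\rangle$ with that tower decomposition (via Grushko and accessibility) and peeling it off then leaves $G$ as a hyperbolic tower over $\langle u\rangle$.

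The delicate point is this converse, and within it the manufacture of the retraction onto $\langle u\rangle$ from the hypothesis $\tp^{G}(u)=p_0$ alone: the type constrains $u$ only in isolation, so extracting the companion element $t'$ and a tower structure compatible with it requires the full strength of the geometric theory of the generic type, after which the final descent from $\hat G$ back to $G$ must be organized so that the removed $\Z$ free factor is compatible with the floors of the tower.
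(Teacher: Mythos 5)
Your ``if'' direction is essentially sound: adjoining a free factor $\langle t\rangle$, checking that each floor decomposition of $G^k$ can be modified (amalgamating $t$ into the non-surface vertex containing $u$) so that $G*\langle t\rangle$ becomes a hyperbolic tower both over $\langle u,t\rangle\cong\F_2$ and over $G$, and then applying Theorem \ref{ConverseSela} twice together with Pillay's identification of $p_0$ with the type of a primitive, does give $\tp^G(u)=p_0$. This is a legitimate variant of the paper's argument, and it even sidesteps the paper's analysis of the bottom floor (ruling out closed surface factors, the explicit retraction for the four-projective-planes group) by importing the extra $\Z$ from outside rather than locating one inside $G^m$.

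The converse, however, is where the content lies, and there your proposal has a genuine gap rather than a proof. The pivotal claim --- that in $\hat G=G*\langle t\rangle$ one can find $t'$ with $\langle u,t'\rangle$ free of rank $2$ and elementarily embedded in $\hat G$ --- is asserted, not established. Nothing in the known theory of $p_0$ (Pillay's results concern $\F_n$ itself) produces, inside the particular model $\hat G$, an ``independent'' realization $t'$ of the generic type, let alone guarantees that the pair $(u,t')$ generates an elementary free subgroup; indeed, once the proposition is proved one can take $t'=t$, so this claim is essentially equivalent to what is being proved and cannot serve as its engine. Moreover, even granting it, the final ``peeling off'' step fails as described: the tower structure of $\hat G$ over $\langle u,t'\rangle$ furnished by the characterization of elementary subgroups has no a priori compatibility with the splitting $G*\langle t\rangle$ (and $t'$ need not be conjugate into $\langle t\rangle$), and since $\langle u\rangle$ is abelian you cannot invoke Theorem \ref{ConverseSela} over the base to transfer anything back to $G$; converting a tower of $G*\langle t\rangle$ over $\langle u,t'\rangle$ into a tower of $G$ over $\langle u\rangle$ requires a genuine geometric argument that is missing. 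The paper proceeds quite differently: it takes a \emph{minimal} hyperbolic subtower $U$ of $G$ containing $\langle u\rangle$ (which exists by the descending chain condition on $G$-limit groups), observes that if $U\neq\langle u\rangle$ there is no embedding $U\hookrightarrow\F_n$ sending $u$ to $a_1$, and then applies the homogeneity machinery of Proposition \ref{MainResult} (factor sets from the shortening argument plus the first-order expressibility of $\Lambda$-relatedness, with target $\F_n$) to extract a non-injective preretraction $U\to G$; Propositions \ref{RetractionPlus} and \ref{Retraction} then yield a proper hyperbolic floor of $U$ over a subgroup containing $\langle u\rangle$, contradicting minimality, so $U=\langle u\rangle$ and $G$ is a tower over $\langle u\rangle$. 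Some argument of this kind --- exploiting the hypothesis $\tp^G(u)=\tp^{\F_n}(a_1)$ through a first-order sentence and the shortening argument --- is what your sketch is missing.
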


Hyperbolic towers are structures defined by Sela, they are fundamental groups of complexes obtained by successive gluing of hyperbolic surfaces along their boundary to a ground floor complex, in such a way that the surfaces retract on the lower levels. For a formal definition, see Section \ref{HypTowersAndPreretractionsSec}. 

An immediate consequence of this characterization is that $p_0$ is not realized in the fundamental group of the connected sum of four projective planes. In other surface groups, we show using this characterization that the type $p_0$ is realized both by elements which represent simple closed curves on the surface and elements which don't. An immediate consequence is the following:

\begin{prop} Let $\Sigma$ be a closed hyperbolic surface which is not the connected sum of three or four projective planes. Then the set of elements of $\pi_1(\Sigma)$ representing simple closed curves on $\Sigma$ is not definable over the empty set.
\end{prop}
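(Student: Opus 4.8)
The plan is to reduce the statement to Proposition~\ref{ElementsOfPrimitiveTypeIntro}. Suppose toward a contradiction that the set $S$ of elements of $\pi_1(\Sigma)$ representing a simple closed curve on $\Sigma$ is defined by a formula $\phi(x)$ without parameters. The crux is to exhibit two elements $u,v\in\pi_1(\Sigma)$ with the \emph{same} type, namely $\tp^{\pi_1(\Sigma)}(u)=\tp^{\pi_1(\Sigma)}(v)=p_0$, such that $u$ represents a simple closed curve of $\Sigma$ while $v$ does not. Granting this, $\pi_1(\Sigma)\models\phi(u)$ and $\pi_1(\Sigma)\models\neg\phi(v)$, so $\phi(x)\in\tp(u)=\tp(v)$ and $\neg\phi(x)\in\tp(v)$, whence $\pi_1(\Sigma)\models\phi(v)\wedge\neg\phi(v)$, which is absurd. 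Hence no such $\phi$ exists.

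By Proposition~\ref{ElementsOfPrimitiveTypeIntro}, producing such $u$ and $v$ amounts to finding a simple closed curve $c$ and a non-simple element $v$ such that $\pi_1(\Sigma)$ is a hyperbolic tower both over $\langle c\rangle$ and over $\langle v\rangle$. For $u=c$ this is essentially classical: when $\Sigma$ has sufficiently negative Euler characteristic one writes $\Sigma=S_1\cup_\partial S_2$ along a separating simple closed curve, with $S_1$ a one-holed torus and $\pi_1(S_1)=\langle c,d\rangle$ free of rank $2$, where $c$ is carried by a non-separating curve of $\Sigma$; one then checks that a suitable retraction $\pi_1(\Sigma)\to\langle c,d\rangle$ (fixing $\pi_1(S_1)$ and folding $\pi_1(S_2)$ onto it) has non-abelian image on $\pi_1(S_2)$, so that $\pi_1(\Sigma)$ is a hyperbolic floor over $\pi_1(S_1)=\langle c,d\rangle$. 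Since $\langle c,d\rangle=\langle c\rangle\ast\langle d\rangle$ is a hyperbolic tower over $\langle c\rangle$, composing floors shows $\pi_1(\Sigma)$ is a tower over $\langle c\rangle$, so $\tp(c)=p_0$.

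The construction of $v$ is more delicate, and this is where the hypothesis on $\Sigma$ will be used. I would realize $\Sigma$ as $P\cup_\partial\Sigma_0$, with $P$ an embedded pair of pants and $\Sigma_0=\Sigma\setminus P$ its (connected) complement, so that $\pi_1(\Sigma_0)$ is free, and check that collapsing $P$ exhibits $\pi_1(\Sigma)$ as a hyperbolic floor over $\pi_1(\Sigma_0)$. I would then take $v\in\pi_1(\Sigma_0)$ to be a primitive element of the free group $\pi_1(\Sigma_0)$ which is \emph{not} freely homotopic to a simple closed curve of $\Sigma_0$; such elements exist because $\pi_1(\Sigma_0)$ has infinitely many primitive conjugacy classes while $\Sigma_0$ carries very few essential simple closed curves (a pair of pants, for instance, carries only its three boundary curves). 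Since $\Sigma_0$ has incompressible boundary in $\Sigma$, an element freely homotopic into $\Sigma_0$ is carried by a simple closed curve of $\Sigma$ only if it is carried by one of $\Sigma_0$; hence $v\notin S$. On the other hand $\pi_1(\Sigma_0)=\langle v\rangle\ast F'$ is a hyperbolic tower over $\langle v\rangle$, and composing with the floor structure shows $\pi_1(\Sigma)$ is a tower over $\langle v\rangle$, so $\tp(v)=p_0$.

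The main obstacle is precisely this last step: verifying that the various curve-collapsing maps genuinely yield hyperbolic \emph{floors} (not merely retractions) and carrying the argument out uniformly, in particular for the surfaces of smallest complexity, where $\Sigma_0$ must be chosen with some care so as still to admit a non-geometric primitive. This is also where the two excluded surfaces are genuinely exceptional, for different reasons: in the connected sum of four projective planes $p_0$ is not realized at all (so there is no such $u$, let alone both $u$ and $v$), while the fundamental group of the connected sum of three projective planes is not even elementarily equivalent to a free group --- it contains a nontrivial element whose square is a commutator, which no free group does --- so $p_0$ is not a type of its theory.
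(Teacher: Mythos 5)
Your overall reduction is the same as the paper's: exhibit two elements of the same type $p_0$, one in ${\cal C}(\Sigma)$ and one not, and conclude non-definability; and your treatment of $u$ (a non-separating simple closed curve contained in a small subsurface on whose fundamental group $\pi_1(\Sigma)$ retracts as a floor) is essentially the paper's argument. The genuine gap is in the construction of $v$. The decomposition $\Sigma = P \cup_{\partial} \Sigma_0$ with $P$ a pair of pants cannot exhibit $\pi_1(\Sigma)$ as a hyperbolic floor over $\pi_1(\Sigma_0)$: in Definition \ref{HypFloor} the surface pieces must be punctured tori or of Euler characteristic at most $-2$, and a pair of pants is neither. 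Worse, $\pi_1(\Sigma_0)$ is in general too big to be the base of any tower: by the characterization recalled in Section \ref{SurfaceCaseSec} (Theorem 5.20 of \cite{PerinElementary}), a tower base must be a free factor of the fundamental group of a \emph{minor} subsurface, hence of rank at most $1-\chi(\Sigma)/2$, while $\pi_1(\Sigma_0)$ has rank $-\chi(\Sigma)$; for $\chi(\Sigma)\leq -3$ this is impossible, and for $\chi(\Sigma)=-2$ the complement of $P$ is again a pair of pants and the proposed floor still violates Definition \ref{HypFloor}. So $\tp^{S}(v)=p_0$ is not established. In addition, the existence of a primitive element of $\pi_1(\Sigma_0)$ not represented by a simple closed curve of $\Sigma_0$ is under-justified: your $\Sigma_0$ is not a pair of pants in general and carries infinitely many isotopy classes of essential simple closed curves, and if one retreats to the small subsurfaces that actually carry tower structures, the punctured torus is a trap — there every primitive element \emph{is} represented by a simple closed curve.

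For comparison, the paper takes $\Sigma_0$ to be a minor subsurface, so that every primitive element of $\pi_1(\Sigma_0)$ realizes $p_0$ (via the tower characterization together with Theorem \ref{ConverseSela} and Theorem \ref{ElementsOfPrimitiveTypeIFF}), and then produces a primitive element outside ${\cal C}(\Sigma)$ by a growth comparison: primitives of $\pi_1(\Sigma_0)$ grow exponentially, whereas ${\cal C}(\Sigma)$ grows at most polynomially by Theorem \ref{GrowthGeodesicSCC} (Birman--Series, Rivin, Mirzakhani). This counting step is the real content of Proposition \ref{SCCNotAType} and is what your sketch is missing; your incompressibility observation (not simple in $\Sigma_0$ implies not simple in $\Sigma$, via geodesic representatives) is correct but only handles the easy half. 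To repair your argument you would either need to import such a counting result, or build a genuine tower structure over $\langle v\rangle$ for an explicitly chosen non-simple $v$ inside a minor (not complementary) subsurface, checking the floor conditions of Definition \ref{HypFloor} there.
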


We also get: 
\begin{prop} Let $\Sigma$ be a closed hyperbolic surface which is not the connected sum of three or four projective planes. Then $\pi_1(\Sigma)$ is not strongly $\aleph_0$-homogeneous.
\end{prop}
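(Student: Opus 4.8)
The plan is to combine the two preceding results—the characterization of $p_0$-type elements via hyperbolic towers (Proposition~\ref{ElementsOfPrimitiveTypeIntro}) and the fact that $p_0$ is realized in $\pi_1(\Sigma)$ both by elements representing simple closed curves and by elements that do not—with the failure of definability of the simple-closed-curve set. First I would fix a closed hyperbolic surface $\Sigma$ which is not the connected sum of three or four projective planes. By the penultimate proposition, $\pi_1(\Sigma)$ realizes $p_0$, and in fact there exist elements $u, v \in \pi_1(\Sigma)$, both of type $p_0$, such that $u$ represents a simple closed curve on $\Sigma$ and $v$ does not. The key point is that having the same type—here, both types equal to $p_0$—is a purely first-order condition that is invariant under automorphisms.

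Next I would argue that $u$ and $v$ cannot lie in the same $\Aut(\pi_1(\Sigma))$-orbit. For this I would invoke the well-known fact (due to Dehn–Nielsen–Baer, together with the fact that for $\Sigma$ closed every automorphism of $\pi_1(\Sigma)$ is induced by a homeomorphism of $\Sigma$, up to conjugation and orientation-reversal) that the property of representing a simple closed curve is preserved by automorphisms of $\pi_1(\Sigma)$: a homeomorphism carries simple closed curves to simple closed curves, and conjugation does not change whether a conjugacy class is represented by an embedded curve. Hence no automorphism of $\pi_1(\Sigma)$ can send $u$ (which represents a simple closed curve) to $v$ (which does not). Concretely, since the set of elements representing simple closed curves is a union of conjugacy classes invariant under $\Aut(\pi_1(\Sigma))$, and $u$ is in it while $v$ is not, $u$ and $v$ are in distinct orbits.

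Putting these together: $\tp^{\pi_1(\Sigma)}(u) = p_0 = \tp^{\pi_1(\Sigma)}(v)$, yet there is no automorphism of $\pi_1(\Sigma)$ taking $u$ to $v$. This is precisely a witness to the failure of strong $\aleph_0$-homogeneity, so $\pi_1(\Sigma)$ is not strongly $\aleph_0$-homogeneous, which is the claim.

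The main obstacle is not in this final deduction—which is short—but in the supporting inputs, namely establishing that $p_0$ is genuinely realized in $\pi_1(\Sigma)$ by elements on both sides of the simple/non-simple divide. That requires exhibiting explicit hyperbolic tower structures of $\pi_1(\Sigma)$ over $\langle u \rangle$ for suitable $u$, and this is exactly where the hypothesis excluding the connected sum of three or four projective planes enters (for the connected sum of four projective planes, $p_0$ is not realized at all, and the connected sum of three has its own degenerate behavior). Since those facts are supplied by the earlier propositions in the excerpt, the remaining work here is just the orbit argument via Dehn–Nielsen–Baer, which is routine.
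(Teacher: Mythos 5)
Your argument is correct and is essentially the paper's own: it derives the statement as an immediate consequence of the fact that $p_0$ is realized both by elements representing (non-separating) simple closed curves and by elements outside ${\cal C}(\Sigma)$, together with the $\Aut(\pi_1(\Sigma))$-invariance of ${\cal C}(\Sigma)$ (which the paper simply asserts and you justify via Dehn--Nielsen--Baer). The only difference is cosmetic: the paper packages the existence of a non-simple realization of $p_0$ as a separate proposition proved by the polynomial-versus-exponential growth comparison, which you correctly identify as the real content and take as an input.
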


The paper is structured as follows: we start by giving in Section \ref{ModelTheorySec} basic definitions and examples about homogeneity, and some overview of stable groups and of the model theoretic properties of the free groups. We devote Section \ref{ModularGroupAndJSJSec} to the description of tools which are essential in proving strong $\aleph_0$-homogeneity of the finitely generated free groups: the modular group, and JSJ decompositions. In Section \ref{FactorSetSec}, we recall some results about morphisms between torsion-free hyperbolic groups, obtained by the shortening argument of Rips and Sela. We are then able in Section \ref{SpecialCaseSec} to show Theorem \ref{MainResultIntro} in a special case, under some additional assumption on the tuples $\bar{a}$ and $\bar{a}'$. This example is given as a toy case to help understand how the proof works in general, it is not needed in the proof of the main result. The following section describes the structure of hyperbolic tower, and states results which enable us to claim that a group admits such a structure. In Section \ref{MainResultSec}, we prove Theorem \ref{MainResultIntro}. The goal of Section \ref{ElementsOfprimitiveTypeSec} is to prove the characterization of elements of type $p_0$ in a finitely generated group $G$ given by Proposition \ref{ElementsOfPrimitiveTypeIntro}. Finally, in Section \ref{SurfaceCaseSec}, we use this characterization to deduce the non homogeneity of surface groups. 

We wish to thank Anand Pillay for suggesting this problem and for useful advice, and Zlil Sela for many helpful conversations. We are also grateful to Vincent Guirardel for the proof of Lemma \ref{ModGroupStabilizes}, and to Gilbert Levitt for his comments on the preliminary versions of the paper.

\section{Some model theory} \label{ModelTheorySec}

The aim of this section is give the definitions of the various notions of homogeneity, as well as a brief overview of some notions which are of interest to place the results of this paper in their model-theoretic context. 

\subsection{Homogeneity: definitions and examples} \label{BasicsHomogeneitySec}

We now want to give a formal account of the different notions of homogeneity, as well as some examples. Again we refer the reader to \cite{ChangKiesler, MarkerModelTheory}, or to the short survey given in \cite{Chatzidakis} for basic model theory definitions.

\begin{defi} Let $\kappa$ be an infinite cardinal. An $\mathcal{L}$-structure $\mathcal{M}$ is $\kappa-$homogeneous if whenever $\beta<\kappa$ and $\bar{a},\bar{b}$ are $\beta$-tuples of elements of $\mathcal{M}$ with $tp^{\mathcal{M}}(\bar{a})=tp^{\mathcal{M}}(\bar{b})$, and $c$ is an element of $\mathcal{M}$, then there is $d$ in $\mathcal{M}$
such that $tp^{\mathcal{M}}(\bar{a}c)=tp^{\mathcal{M}}(\bar{b}d)$.
\end{defi}

A slightly stronger notion is that of strong $\kappa$-homogeneity. 

\begin{defi} Let $\kappa$ be an infinite cardinal. An $\mathcal{L}$-structure $\mathcal{M}$ is strongly $\kappa$-homogeneous 
if whenever $\beta<\kappa$ and $\bar{a},\bar{b}$ are $\beta$-tuples from $\mathcal{M}$ with $tp^{\mathcal{M}}(\bar{a})=tp^{\mathcal{M}}(\bar{b})$ 
then there is an automorphism of the $\mathcal{L}$-structure $\mathcal{M}$ which sends $\bar{a}$ to $\bar{b}$. 
\end{defi}

In both cases an $\mathcal{L}$-structure $\mathcal{M}$ is called homogeneous (respectively strongly homogeneous) if it is $|\mathcal{M}|$-homogeneous (respectively strongly $|\mathcal{M}|$-homogeneous).
 
One can easily see that if a structure is strongly $\kappa$-homogeneous then it is $\kappa$-homogeneous. A less trivial 
observation is the following
\begin{prop}
A structure is homogeneous if and only if it is strongly homogeneous. 
\end{prop}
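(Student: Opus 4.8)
**The plan is to prove that a structure is homogeneous if and only if it is strongly homogeneous, where "homogeneous" means $|\mathcal{M}|$-homogeneous and "strongly homogeneous" means strongly $|\mathcal{M}|$-homogeneous.** One direction is immediate from the observation already noted in the text: strong $\kappa$-homogeneity implies $\kappa$-homogeneity, so in particular strong homogeneity implies homogeneity. The content is in the converse: assuming $\mathcal{M}$ is $|\mathcal{M}|$-homogeneous, I want to produce, for any two tuples $\bar{a},\bar{b}$ of length $\beta < |\mathcal{M}|$ with $\tp^{\mathcal{M}}(\bar{a}) = \tp^{\mathcal{M}}(\bar{b})$, an automorphism of $\mathcal{M}$ carrying $\bar{a}$ to $\bar{b}$.

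\medskip

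\textbf{Main construction: a back-and-forth argument.} Let $\lambda = |\mathcal{M}|$ and fix an enumeration $\mathcal{M} = \{m_\xi : \xi < \lambda\}$. Starting from the type-equality $\tp^{\mathcal{M}}(\bar{a}) = \tp^{\mathcal{M}}(\bar{b})$, I will build by transfinite recursion an increasing chain of partial maps $f_\xi$, each a bijection between subsets $A_\xi, B_\xi$ of $\mathcal{M}$ containing the entries of $\bar{a}$ (resp. $\bar{b}$), such that each $f_\xi$ is \emph{elementary}, i.e. for every tuple $\bar{c}$ from its domain, $\tp^{\mathcal{M}}(\bar{c}) = \tp^{\mathcal{M}}(f_\xi(\bar{c}))$. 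At stage $\xi$, on the "forth" step I adjoin the least element $m_\eta$ of $\mathcal{M}$ not yet in $A_\xi$ to the domain: writing the current domain as the range of a tuple $\bar{c}$, the $\aleph_0$-homogeneity hypothesis applied to $\bar{c}$ and $f_\xi(\bar{c})$ — which have the same type since $f_\xi$ is elementary — together with the element $m_\eta$, yields an element $d$ with $\tp^{\mathcal{M}}(\bar{c}\,m_\eta) = \tp^{\mathcal{M}}(f_\xi(\bar{c})\,d)$; set $f_{\xi+1}(m_\eta) = d$. On the "back" step I symmetrically adjoin the least element of $\mathcal{M}$ not yet in the range. At limit stages take unions. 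After $\lambda$ steps (interleaving forth and back appropriately) the union $f = \bigcup_{\xi<\lambda} f_\xi$ is an elementary bijection $\mathcal{M} \to \mathcal{M}$ extending $\bar{a} \mapsto \bar{b}$; being elementary in the language of structures (in particular preserving all atomic and negated-atomic formulas, hence the graphs of the function and relation symbols), $f$ is an automorphism of $\mathcal{M}$.

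\medskip

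\textbf{The subtle point to check.} The one place where care is needed is that each homogeneity step can only be applied to tuples of length $< \lambda$, so I must ensure the domain $A_\xi$ never reaches size $\lambda$ before the recursion is complete — but this is automatic, since at stage $\xi$ the domain has cardinality at most $|\beta| + |\xi| + \aleph_0 < \lambda$ (using $\beta < \lambda$, $\xi < \lambda$, and $\lambda$ infinite), so the hypothesis of $\lambda$-homogeneity genuinely applies. One also has to verify that the property "$f_\xi$ is elementary" is preserved at successor stages — this follows because adding a single pair $(m_\eta, d)$ with $\tp^{\mathcal{M}}(\bar{c}\,m_\eta) = \tp^{\mathcal{M}}(f_\xi(\bar{c})\,d)$ preserves the type of \emph{every} subtuple, as types of subtuples are determined by the type of the whole tuple — and at limit stages because a formula involves only finitely many variables, so any witness to non-elementarity of the union would already appear at some stage $\xi < \lambda$. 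The only genuine obstacle is bookkeeping: making sure the interleaving of forth and back steps exhausts \emph{both} copies of $\mathcal{M}$, which is handled by the standard device of, at even stages, extending the domain by the least missing element, and at odd stages, extending the range by the least missing element.
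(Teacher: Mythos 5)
Your proof is correct and takes exactly the route the paper has in mind: the paper gives no details beyond saying the proof is a classical back-and-forth construction, which is precisely what you carry out. Two cosmetic slips only: in the "forth" step you should invoke $\lambda$-homogeneity (not $\aleph_0$-homogeneity), and the cardinality bound on the domain should be $|\beta|+|\xi|<\lambda$ rather than $|\beta|+|\xi|+\aleph_0<\lambda$, since the latter inequality fails when $\lambda=\aleph_0$ (where the domain is in fact finite at every stage, so the argument still goes through).
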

The proof uses a classical back and forth construction.     

There are a few results in model theory connecting homogeneity with other model theoretic properties. 
For example, a $\kappa$-saturated model is $\kappa$-homog\-e\-n\-e\-o\-us. 
Also, an atomic model (that is a model $\mathcal{A}$ for which $tp^{\mathcal{A}}(\bar{a})$ 
is isolated for all $\bar{a}\in\mathcal{A}^n$) is $\aleph_0$-homogeneous. 
Along the same line, one can see that all models of an $\aleph_0$-categorical 
theory are $\aleph_0$-homogeneous. 

The following example is more natural, and also points out the importance of the language considered:  
indeed, as shown in \cite{NadelStavi}, any model of $\mathcal{T}h(\mathbb{Z},1)$ is $\aleph_0$-homogeneous. 
But not all models of $\mathcal{T}h(\mathbb{Z})$ are $\aleph_0$-homogeneous, so it is essential that 
$1$ be named. More generally, if $G$ is an infinite abelian group of bounded exponent, then $\mathcal{T}h(G)$ is $\aleph_0$-categorical (see \cite[Corollary 4.4.4]{MarkerModelTheory}). Hence by the previous remarks, all models of $\mathcal{T}h(G)$ are $\aleph_0$-homogeneous. 

We finally give an example demonstrating that the notions of strong $\kappa$-homogen\-e\-i\-t\-y 
and $\kappa$-homogeneity do not coincide in general. 
\begin{ex}
Let $\mathcal{L}=\{E\}$ be the language consisting of 
a single binary relation, and let $\mathcal{M}$ be an $\mathcal{L}$-structure 
where $E$ is interpreted as an equivalence relation with two equivalence classes, one of 
cardinality $\aleph_0$ and the other of cardinality $\aleph_1$. Then  
$\mathcal{T}h(\mathcal{M})$ is $\aleph_0$-categorical, thus $\mathcal{M}$ is $\aleph_0$-homogeneous. But if we take $a,b\in\mathcal{M}$ with $\lnot E^{\mathcal{M}}(a,b)$, then 
$tp^{\mathcal{M}}(a)=tp^{\mathcal{M}}(b)$, and it is easy to check that there is no 
automorphism of $\mathcal{M}$ taking $a$ to $b$. 
\end{ex}

\subsection{Stable groups}
Stability theory is one of the main recent developments of model theory. It was introduced by Shelah for the purpose of distinguishing wild structures from tame ones from the point of view of first-order logic. The stability of a first-order theory can be given a number of equivalent definitions: we give one which states that it is impossible to "encode" an infinite linear order in a model of the theory.
\begin{defi} A complete theory $T$ is stable if there do not exist a formula $\delta(x, y)$, a model $\mathcal{M}$ of $T$, and sequences $(a_i)_{i <\omega}$ and $(b_i)_{i \in \omega}$ of elements of $\mathcal{M}$ such that $\mathcal{M} \models \delta(a_i, b_j)$ if and only if $i < j$. 
\end{defi}

Structures whose first-order theory is stable are in some sense well-behaved, whereas it is often very difficult to say anything about unstable theories. For an introduction to stability theory, the reader is referred to \cite{PillayStability}.

Within this framework, stable groups hold a special position. There is an independent theory 
developed for them including concepts such as generic types and connected components (see \cite{PoizatStableGroups} or turn to the introduction of \cite{PillayForking}). For the benefit of the reader we will now explain a few model theoretic notions about groups, with some emphasis on stable groups.

In model theory, a group $(G,\cdot)$ is a structure 
equipped with a group operation, but possibly also with some additional
relations and functions. Even when we do not have explicitly any additional
relations or functions, all the sets $X \subseteq G^n$ which are definable over the group under
consideration will be part of our structure. 

We define a stable group to be a group definable in a stable theory. By this we
mean that $(G,\cdot)$ is definable in a model $\mathcal{M}$ of the stable theory $T$, and it may be
equipped with some or all of the structure induced from $\mathcal{M}$. A typical example is 
when $G$ is an algebraic group over an algebraically closed field, and we think 
of $G$ as equipped with predicates for all Zariski closed subsets of $G^n$. 

The simplest case is of course when the group coincides with the ambient structure, and indeed this is the case for non abelian free groups.

\begin{defi}
Let $G$ be a 
group. Let $X$ be a definable subset of $G$.
We say that $X$ is left-generic (right-generic) if finitely many left 
(right) translates of $X$ by elements of $G$ cover $G$.
\end{defi}

Note that for a definable subset $X$ of a stable group $G$, it can be shown that 
$X$ is left generic iff $X$ is right generic, so for stable groups we will simply say generic. 
We will also say that a formula $\phi(x)$ with one free variable is generic in $G$ if the set 
$X =\{ g \in G \mid G \models \phi(g)\}$ it defines is generic. 

\begin{defi}
Let $G$ be a group. We say that $G$ is connected if it has no definable
proper subgroup of finite index.
\end{defi}

\begin{defi}
Let $G$ be a stable group. Let $g$ be an element of $G$, and let $A$ be a set of parameters
from $G$. We say that $tp^G(g/A)$ is a generic type if every formula in $tp^G(g/A)$ is generic.
\end{defi}

We have the following useful fact:
\begin{fact} \label{ConnectedIffUniqueGenericType}
Let $G$ be a stable group. Then $G$ is connected if and only if there is over any set of parameters a unique generic type.
\end{fact}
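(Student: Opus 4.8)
The plan is to deduce the statement from the standard structure theory of stable groups, organised around the \emph{connected component} $G^0$, defined as the intersection of all definable subgroups of $G$ of finite index; by construction $G$ is connected exactly when $G=G^0$. One works inside a sufficiently saturated model and invokes the following classical facts about stable groups (see \cite{PoizatStableGroups}): generic types exist over every set of parameters; the restriction of a generic type to a subset is again generic, and every generic type over a set $A$ extends to a generic type over any $A'\supseteq A$; left translation by elements of $G$ acts on the set of generic types over a model and permutes it; and, for a generic type $p$ over a model $M$, the stabilizer $\Stab(p)=\{g\in G: g\cdot p=p\}$ equals $G^0$. The essential input --- and the place where all the real work sits --- is that $G^0$ carries a \emph{unique} generic type, which is moreover stationary, so that the generic types of $G$ over a model are precisely its translates and are therefore in bijection with $G/G^0$.

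Granting this package, I would treat the two implications as follows. If $G$ is connected, then $G=G^0$, so over any model $M$ there is a single generic type; since two distinct generic types over an arbitrary parameter set $A$ would extend to two distinct generic types over some model, uniqueness descends to every $A$, which gives one direction. For the converse I argue contrapositively: assume $G$ is not connected and fix a definable finite-index subgroup $H\leq G$ with $[G:H]=n\geq 2$. Enlarge the parameter set $A$ so that $H$ is $A$-definable and $A$ contains a representative of each coset of $H$. Each coset $gH$ is then an $A$-definable generic subset of $G$ (finitely many translates of $H$ cover $G$), hence is contained in some generic type over $A$; as distinct cosets are disjoint, these generic types are pairwise distinct. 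So there is a parameter set over which the generic type is not unique, contradicting the hypothesis.

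I expect the main obstacle to be exactly the structure-theoretic package quoted in the first paragraph: the existence of generic types, the identification $\Stab(p)=G^0$ for $p$ generic, and above all the stationarity of the generic type of $G^0$ (equivalently, transitivity of the translation action on generic types over a model). These rest on the behaviour of forking in stable theories together with the chain conditions on definable subgroups of a stable group, and a self-contained proof would essentially consist of developing that machinery. Two minor points also need a word of justification and are routine: that connectedness in the sense used here (no \emph{definable} proper subgroup of finite index) is the same as $G=G^0$, even though $G^0$ is in general only type-definable; and the transfer, used in both directions above, between generic types over arbitrary parameter sets and generic types over models.
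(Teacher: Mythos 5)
Your outline is correct: the contrapositive direction via the cosets of a definable finite-index subgroup (each coset is a generic set, hence lies in some generic type over a parameter set naming the subgroup and coset representatives, and disjoint cosets give distinct generics) is self-contained, and the forward direction correctly reduces to the standard package for stable groups --- existence of generic types, their preservation under nonforking extension/restriction and translation, and $\Stab(p)=G^0$ with stationarity of the generic of $G^0$, so that generics over a model correspond to $G/G^0$. The paper itself gives no proof of this Fact, quoting it from the standard theory of stable groups (Poizat, and the introduction of Pillay's paper), and your argument is exactly the one found in those references, so there is nothing further to reconcile.
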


\subsection{Free groups}

Let $\kappa, \lambda$ be cardinals, and let $\F_{\kappa}$ denote the free group of rank $\kappa$. If $\kappa \leq \lambda$, any injection of $\kappa$ into $\lambda$ induces an embedding of $\F_{\kappa}$ into $\F_{\lambda}$ as a free factor. 

We say that an element of $\F_{\kappa}$ is primitive if it is part of some basis of $\F_{\kappa}$, analogously, a set of elements of $\F_{\kappa}$ is primitive if it can be extended to some basis of $\F_{\kappa}$. As mentioned above, Sela proved in \cite{Sel6} that finitely generated free groups of rank at least $2$ have the same first-order theory. In fact he showed the following 
\begin{thm}\label{FmElementaryInFn} Let $m, n \in \N$ with $2\leq m \leq n$. Any embedding of $\F_m$ into $\F_n$ as a free factor is elementary.
\end{thm}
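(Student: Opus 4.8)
The statement to prove is that any embedding of $\F_m$ into $\F_n$ as a free factor (with $2 \le m \le n$) is elementary. I would reduce this to the case $n = m+1$ by an induction argument: if $\F_m$ is a free factor of $\F_n$, then we have a chain $\F_m \le \F_{m+1} \le \cdots \le \F_n$ of free factor inclusions, each increasing the rank by one, and since composition of elementary embeddings is elementary, it suffices to treat a single ``rank one'' step. So fix a basis $x_1,\ldots,x_m$ of $\F_m$, extend it to a basis $x_1,\ldots,x_m,x_{m+1}$ of $\F_{m+1}$, and show the inclusion $\langle x_1,\ldots,x_m\rangle \hookrightarrow \langle x_1,\ldots,x_{m+1}\rangle$ is elementary; that is, for every first-order formula $\phi(\bar y)$ and every tuple $\bar a$ over $\F_m$, one has $\F_m \models \phi(\bar a)$ iff $\F_{m+1} \models \phi(\bar a)$.

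The main tool is the construction of a \emph{retraction} exhibiting $\F_{m+1}$ as a hyperbolic tower over $\F_m$: indeed $\F_{m+1} = \F_m * \langle x_{m+1}\rangle$ is, in the simplest way, a hyperbolic tower (with no surface floors, just a free rank-one extension) over its subgroup $\F_m$, and there is the obvious retraction $r\colon \F_{m+1} \to \F_m$ killing $x_{m+1}$. The heart of the matter is Sela's theorem (the culmination of \cite{Sel1}--\cite{Sel6}) that if $G$ is a hyperbolic tower over a subgroup $H$, then $H \preceq G$; equivalently, that having a tower structure over $H$ is what controls elementarity. Given the machinery the paper sets up later (hyperbolic towers, preretractions, the shortening argument), the cleanest route is: observe $\F_{m+1}$ is a hyperbolic tower over $\F_m$, then invoke the tower-implies-elementary direction. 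One then transfers elementarity along arbitrary free-factor inclusions $\F_m \le \F_n$ by the chain argument above, noting a free factor inclusion composed with another is still a free factor inclusion.

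The genuinely hard input — which I would cite rather than reprove — is the implication ``$G$ a hyperbolic tower over $H$ $\implies$ $H \preceq G$.'' This rests on Sela's solution to Tarski's problem: one uses the shortening argument to analyze $\Hom(G, \F)$-type questions, shows that formulas true in $H$ pull back to $G$ via the retraction, and that formulas true in $G$ push down to $H$ using that the tower retraction, together with the structure of the JSJ and the modular group, lets one ``shorten'' any witness in $G$ to one lying in (a conjugate of) $H$. Within the scope of this paper this theorem is black-boxed; the only thing left to check by hand is the elementary (pun aside) algebraic fact that $\F_m * \langle t \rangle$ literally fits the definition of a hyperbolic tower over $\F_m$ given in Section \ref{HypTowersAndPreretractionsSec} — a degenerate tower with trivial surface part and the evident retraction — and that iterating free-factor inclusions stays within the class of free-factor inclusions so the induction closes.
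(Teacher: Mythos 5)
Your deduction is logically fine, but it is worth being clear that the paper does not prove Theorem \ref{FmElementaryInFn} at all: it is quoted as Sela's theorem from \cite{Sel6}, and the rest of the paper takes it as an input. Your route instead derives it from the implication that a hyperbolic tower structure over a non abelian subgroup $H$ forces $H$ to be elementarily embedded; in this paper that is Theorem \ref{ConverseSela}, and granting that black box the remaining check is indeed trivial --- in fact your reduction to rank-one steps and the induction are unnecessary, since $\F_n = \F_m * \F_{n-m}$ is already a zero-floor tower over $\F_m$ according to Definition \ref{HypTower} (which allows $G^m = H * F$ with $F$ free of any finite rank), and an embedding onto a free factor is elementary exactly when its image is an elementary subgroup. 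The paper itself performs precisely this deduction later, in the proof of Theorem \ref{ElementsOfPrimitiveTypeIFF} (``by Theorem \ref{ConverseSela} again, this free factor is elementary in $\F_n$''), so your argument is consistent with the paper's logic; the hypothesis $m \geq 2$ is what supplies the non abelian assumption needed there, and you should say so explicitly. Two caveats on what this buys: the general tower statement for an arbitrary non abelian base is not ``the culmination of \cite{Sel1}--\cite{Sel6}'' as you suggest, but the unpublished Theorem \ref{ConverseSela} (\cite{SelaPrivate}, following the proof of Theorem 7.6 of \cite{Sel7}), so your proof rests on a citation whose published status is weaker than that of the theorem being proved, which does have a published proof in \cite{Sel6}; and since the proof of that tower theorem uses the same Tarski-problem machinery of which Theorem \ref{FmElementaryInFn} is historically a part, what you obtain is a repackaging of Sela's argument rather than an independent or more elementary proof.
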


A sequence of $\mathcal{L}$-structures $(\mathcal{M}_i)_{i \in I}$ is an elementary chain if for any $i, j$ with $i \leq j$, the structure $\mathcal{M}_i$ is an elementary substructure of $\mathcal{M}_j$. It is easy to see that the union $\mathcal{M}_{I} = \bigcup_{i \in I} \mathcal{M}_i$ can be seen as an $\mathcal{L}$-structure, and it is a classical result of model theory that $\mathcal{M}_i$ is an elementary substructure of $\mathcal{M}_{I}$ for each $i \in I$ (see \cite{MarkerModelTheory}). 

Theorem \ref{FmElementaryInFn} shows that $(\F_n)_{n \in \omega}$ with the canonical embedding of $\F_m$ in $\F_n$ as a free factor is an elementary chain. Now clearly the union of $(\F_n)_{n \in \omega}$ is isomorphic to $\F_{\aleph_0}$: we see that for $n \geq 2$, any embedding of $\F_n$ as a free factor in $\F_{\aleph_0}$ is elementary. In particular, $\F_{\aleph_0}$ is elementary equivalent to $\F_n$.

It is straightforward to see that Theorem \ref{FmElementaryInFn} also holds for free groups of infinite rank, using for example the Tarski-Vaught test (see \cite{MarkerModelTheory}):
\begin{thm}[Tarski-Vaught Test]
Suppose that $\mathcal{M}$ is a substructure of $\mathcal{N}$. Then $\mathcal{M}$ 
is an elementary substructure of $\mathcal{N}$ iff for any formula $\phi(x,\bar{y})$ and 
$\bar{a}\in\mathcal{M}$, if there is $b\in \mathcal{N}$ 
such that $\mathcal{N}\models\phi(b,\bar{a})$ then there is $c\in\mathcal{M}$ 
such that $\mathcal{N}\models\phi(c,\bar{a})$.
\end{thm}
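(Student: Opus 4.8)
The plan is to prove the two implications separately. The forward direction is immediate from the definition of elementary substructure; the backward direction is the substantive one and proceeds by induction on the complexity of formulas.

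For the forward direction, suppose $\mathcal{M}$ is an elementary substructure of $\mathcal{N}$. Given a formula $\phi(x,\bar{y})$ and a tuple $\bar{a}\in\mathcal{M}$, if there is $b\in\mathcal{N}$ with $\mathcal{N}\models\phi(b,\bar{a})$, then $\mathcal{N}\models\exists x\,\phi(x,\bar{a})$; since $\bar{a}$ lies in $\mathcal{M}$ and $\mathcal{M}$ is elementary in $\mathcal{N}$, also $\mathcal{M}\models\exists x\,\phi(x,\bar{a})$, so there is a witness $c\in\mathcal{M}$ with $\mathcal{M}\models\phi(c,\bar{a})$, and applying elementarity once more gives $\mathcal{N}\models\phi(c,\bar{a})$, as required.

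For the backward direction, assume the stated condition holds. Fixing a presentation of first-order logic that uses only the connectives $\lnot$ and $\wedge$ and the quantifier $\exists$, I would show by induction on the structure of a formula $\psi(\bar{y})$ that for every tuple $\bar{a}\in\mathcal{M}$ one has $\mathcal{M}\models\psi(\bar{a})$ if and only if $\mathcal{N}\models\psi(\bar{a})$; this is exactly the statement that $\mathcal{M}$ is an elementary substructure of $\mathcal{N}$. The atomic case is precisely the hypothesis that $\mathcal{M}$ is a substructure of $\mathcal{N}$, so that the interpretations of the function and relation symbols agree on tuples from $\mathcal{M}$; the cases of $\lnot$ and $\wedge$ follow formally from the induction hypothesis. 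The only interesting case is $\psi(\bar{y})=\exists x\,\phi(x,\bar{y})$. If $\mathcal{M}\models\psi(\bar{a})$, choose $c\in\mathcal{M}$ with $\mathcal{M}\models\phi(c,\bar{a})$; the induction hypothesis gives $\mathcal{N}\models\phi(c,\bar{a})$, hence $\mathcal{N}\models\psi(\bar{a})$. Conversely, if $\mathcal{N}\models\psi(\bar{a})$, pick $b\in\mathcal{N}$ with $\mathcal{N}\models\phi(b,\bar{a})$, and now invoke the Tarski-Vaught condition to obtain some $c\in\mathcal{M}$ with $\mathcal{N}\models\phi(c,\bar{a})$; the induction hypothesis applied to $\phi$ then yields $\mathcal{M}\models\phi(c,\bar{a})$, so $\mathcal{M}\models\psi(\bar{a})$.

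The one point requiring care --- and the closest thing to an obstacle in an otherwise routine argument --- is setting up the induction so that when we reach the quantifier step the subformula $\phi$ has strictly smaller complexity, and hence already satisfies the absoluteness conclusion supplied by the induction hypothesis. This is also why it is convenient to take $\exists$ rather than $\forall$ as primitive: the Tarski-Vaught hypothesis is phrased in terms of existential witnesses, so it applies directly at exactly that step, with universal statements handled via $\lnot\exists\lnot$. Everything else is bookkeeping.
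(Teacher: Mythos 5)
Your proof is correct and is exactly the standard Tarski--Vaught argument: the paper itself does not prove this statement but cites it as a classical result (referring to \cite{MarkerModelTheory}), and your induction on formula complexity with $\lnot,\wedge,\exists$ as primitives, using the hypothesis precisely at the existential step, is the proof found there. Nothing further is needed.
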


\begin{lemma}\label{FKappaElementaryInFLambda}
Let $\kappa,\lambda$ be infinite cardinals, with $\kappa\leq \lambda$. Then any embedding of $\F_{\kappa}$ in $\F_{\lambda}$ as a free factor is elementary.
\end{lemma}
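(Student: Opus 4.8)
The plan is to verify the Tarski--Vaught test for the inclusion $\F_\kappa \subseteq \F_\lambda$, producing the required witnesses inside $\F_\kappa$ by pushing witnesses from $\F_\lambda$ back in via a suitably chosen automorphism of $\F_\lambda$. Given a free factor embedding, identify $\F_\kappa$ with its image, so that $\F_\lambda = \F_\kappa * H$; fix a basis $Y$ of $\F_\kappa$ and a basis $Z$ of $H$, so that $X = Y \sqcup Z$ is a basis of $\F_\lambda$, and observe that $|Y| = \kappa$ is infinite (the case $H = 1$ being trivial).

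So suppose $\phi(x, \bar y)$ is a formula, $\bar a$ a tuple from $\F_\kappa$, and $b \in \F_\lambda$ with $\F_\lambda \models \phi(b, \bar a)$. Since $\bar a$ and $b$ are finitely many words in the basis $X$, there are finite subsets $Y_0 \subseteq Y_1 \subseteq Y$ and $Z_1 \subseteq Z$ with $\bar a$ contained in $\langle Y_0 \rangle$ and $b$ contained in $\langle Y_1 \cup Z_1 \rangle$. This is where the infinite rank of $\F_\kappa$ enters: since $Y$ is infinite and $Y_1$ is finite, one may choose an injection $z \mapsto y_z$ of $Z_1$ into $Y \setminus Y_1$. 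The permutation of $X$ swapping each $z \in Z_1$ with the corresponding $y_z$ and fixing all other basis elements extends to an automorphism $\theta$ of $\F_\lambda$. By construction $\theta$ restricts to the identity on $\langle Y_1 \rangle$, hence fixes $\bar a$, while $\theta(b) \in \langle Y_1 \cup \{\, y_z : z \in Z_1 \,\}\rangle \subseteq \langle Y \rangle = \F_\kappa$. As $\theta$ is an automorphism, $\F_\lambda \models \phi(\theta(b), \theta(\bar a))$, i.e. $\F_\lambda \models \phi(c, \bar a)$ with $c := \theta(b) \in \F_\kappa$. The Tarski--Vaught test then yields $\F_\kappa \preceq \F_\lambda$, and since the embedding is an isomorphism onto this elementary subgroup, it is elementary.

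The only real step is the construction of $\theta$: one needs enough unused basis elements in $Y$ to absorb the finitely many letters of $b$ coming from $H$, which is exactly what the hypothesis that $\kappa$ is infinite guarantees. Note that, unlike the finite-rank Theorem \ref{FmElementaryInFn}, this argument is elementary and does not appeal to it; correspondingly it breaks down precisely when $\F_\kappa$ has finite rank, since then $Y \setminus Y_1$ can be empty. (If one prefers, one can alternatively first reduce, by an elementary-chain argument along a basis of $H$, to the case where $H$ is infinite cyclic, and then run the automorphism trick there; but the direct version above is cleanest.)
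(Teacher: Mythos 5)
Your proof is correct and is essentially the paper's argument: both verify the Tarski--Vaught test by producing an automorphism of $\F_{\lambda}$ that fixes the finitely many relevant elements of $\F_{\kappa}$ (a finite primitive set containing the parameters) and moves the witness $b$ into $\F_{\kappa}$. The only difference is that you spell out explicitly, via the basis-swap permutation, the construction that the paper leaves as an unproved ``easy claim,'' and you rightly note that no appeal to Theorem \ref{FmElementaryInFn} is needed.
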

\begin{proof}
Let $\phi(x,\bar{y})$ be an $\mathcal{L}$-formula, let $\bar{a}\in \F_{\kappa}$, 
and suppose there exists $b \in \F_{\lambda}$ such that $\F_{\lambda}\models\phi(b,\bar{a})$.  
By the Tarski-Vaught test, we only need to show that there is $c\in \F_{\kappa}$ 
such that $\F_{\lambda}\models\phi(c,\bar{a})$. This will be an immediate consequence of the following easy claim.
\begin{claim} For any finite primitive set $P$ of $\F_{\kappa}$ and any element $b$ of $\F_{\lambda}$, 
there is an automorphism $f$ of $\F_{\lambda}$ fixing $P$ pointwise such that $f(b)\in \F_{\kappa}$.
\end{claim}
Now, the elements of the tuple $\bar{a}$ are words over some finite primitive set $P$ of $\F_{\kappa}$ since 
by hypothesis $\bar{a}\in \F_{\kappa}$. By the claim above, there is an automorphism of $\F_{\lambda}$ which fixes $P$ pointwise and such that $f(b)\in \F_{\kappa}$. 
Thus $\F_{\lambda}\models \phi(b,\bar{a})$ iff $\F_{\lambda}\models \phi(f(b),\bar{a})$, and this finishes the proof. 
\end{proof}

From now on we denote by $T_{fg}$ the common theory of the non abelian free groups. The following 
astonishing result has been proved by Sela in \cite{SelaStability}.

\begin{thm}
$T_{fg}$ is stable.
\end{thm}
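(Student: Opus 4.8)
The plan is to use Shelah's combinatorial characterization of stability: a complete theory is stable if and only if no formula has the \emph{order property}. So $T_{fg}$ is stable precisely when there is no formula $\phi(\bar x,\bar y)$, no model $\mathcal{M}\models T_{fg}$, and no sequences $(\bar a_i)_{i<\omega}$, $(\bar b_i)_{i<\omega}$ in $\mathcal{M}$ with $\mathcal{M}\models\phi(\bar a_i,\bar b_j)$ if and only if $i<j$. Since $T_{fg}$ is complete and $\F$ is one of its models, it suffices to show: for no $\phi(\bar x,\bar y)$ does $\F$ contain, for every $n$, tuples $\bar a_1,\dots,\bar a_n$ and $\bar b_1,\dots,\bar b_n$ realizing the ``ladder'' pattern $\F\models\phi(\bar a_i,\bar b_j)\iff i<j$. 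I would argue by contradiction, fixing such a $\phi$ and, for each $n$, such a configuration inside $\F$.

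Next I would invoke the structure theory of definable sets over free groups underlying Sela's solution to Tarski's problem (\cite{Sel1}-\cite{Sel6}) together with the Rips--Sela shortening machinery recalled in Section \ref{FactorSetSec}. The system of equations and inequations defining $\phi(\bar x,\bar b)$ is organized by a Makanin--Razborov diagram, so that the fibres $\phi(\F,\bar b)$ are controlled, uniformly in $\bar b$, by finitely many limit groups; to keep track of the $\bar x$-side and the $\bar b$-side simultaneously one passes to the \emph{duo limit groups} of \cite{SelaStability}. The technical core is a boundedness statement: finitely many duo limit groups, together with finitely many lower-dimensional ``exceptional'' families, suffice to describe, uniformly in $\bar b$, which $\bar a$ satisfy $\phi(\bar a,\bar b)$. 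Conceptually this is a collapse of the relevant fragment of the theory onto its $\forall\exists$ part, expressed geometrically, and it reflects the fact that Diophantine (equationally definable) sets in $\F$ form a Noetherian family.

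Then I would close the argument. Letting $n\to\infty$ and passing to a subsequence, the ladder configurations yield a sequence of homomorphisms from a fixed finitely generated group which, by compactness of the space of actions on $\R$-trees, converges to a non-trivial action on an $\R$-tree; the shortening argument lets us precompose with modular automorphisms so that the shortened configurations eventually factor through a proper quotient. Because only finitely many duo limit groups are available to encode the pairs $(\bar a_i,\bar b_j)$, this collapse forces the truth value of $\phi(\bar a_i,\bar b_j)$ to stabilize along the subsequence, contradicting the requirement that it match the relation $i<j$. Hence no formula has the order property and $T_{fg}$ is stable.

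The main obstacle, by a wide margin, is the boundedness of the duo limit groups attached to a definable family: this is the heart of \cite{SelaStability} and draws on essentially all of the geometric apparatus --- Makanin--Razborov diagrams, JSJ decompositions, test sequences, and the shortening argument. The model-theoretic wrapper (reduction via the order property, extraction of the limiting tree action, reading off the contradiction) is comparatively soft once that input is available. For this reason --- as the authors do here --- I would in practice cite Sela's theorem rather than reconstruct its proof.
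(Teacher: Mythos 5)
Your proposal ultimately does exactly what the paper does: the paper offers no proof of this theorem but simply cites Sela's stability paper \cite{SelaStability}, and you likewise conclude that the correct move is to invoke Sela's result, your sketch of its internals (order property, Makanin--Razborov diagrams, duo limit groups) being an accurate, if necessarily loose, summary of what that citation contains. So the approach is essentially the same as the paper's.
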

      
The above theorem clearly introduces new tools in the studying of free groups. In \cite{PillayForking}, Pillay observed that the free group $\F_{\aleph_0}$ is connected. As connectedness is a first order property, this shows that any model 
of $T_{fg}$ is a connected stable group. 

By Fact \ref{ConnectedIffUniqueGenericType}, connectedness of a stable group implies that there is 
a unique generic type over any set of parameters. We denote by $p_0$ the 
unique generic type of $T_{fg}$ over $\emptyset$.  

Towards the understanding of $p_0$, Pillay proved in \cite{PillayGenericity} the following:
\begin{thm}
Let $a$ be an element of $\F_n$ with $n\geq 2$. If $a$ realizes $p_0$ in $\F_n$, then $a$ is primitive. 
\end{thm}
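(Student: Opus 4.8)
The plan is to prove the equivalent algebraic statement that $\tp^{\F_n}(a)=p_0$ forces $\langle a\rangle$ to be a free factor of $\F_n$; since $n\geq 2$ this is exactly the assertion that $a$ is primitive, provided we know $a\neq 1$. That $a\neq 1$ is immediate: in an infinite connected stable group the generic type is non-algebraic — a generic definable set is infinite, since finitely many translates of a finite set cannot cover the infinite group $\F_n$ — so no algebraic formula belongs to $p_0$, in particular $x=1$ does not, and hence $x\neq 1$ lies in the complete type $p_0$.

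It then suffices to produce a retraction $r\colon\F_n\to\langle a\rangle$, that is, a homomorphism onto $\langle a\rangle$ fixing $a$: a retract of a free group is a free factor, so such an $r$ yields a free decomposition $\F_n=\langle a\rangle\ast\ker r$, and adjoining $a$ to a basis of $\ker r$ produces a basis of $\F_n$ containing $a$. To build $r$ I would follow Pillay's route through Sela's analysis of the first-order theory of $\F_n$: genericity of $\tp^{\F_n}(a)$ expresses that $a$ satisfies only the weakest possible first-order conditions, and the aim is to turn this into a nontrivial splitting of $\F_n$ in which $\langle a\rangle$ is elliptic, from which a retraction can be extracted. Concretely, one attaches to the generic type a test sequence of homomorphisms $h_i\colon\F_n\to\F$ into a free group, passes to the limit group acting on an associated $\R$-tree, and applies the shortening argument of Rips and Sela: the impossibility of shortening the $h_i$ relative to $\langle a\rangle$ produces a splitting of $\F_n$ relative to $\langle a\rangle$, and — using that $\F_n$ is itself free — a preretraction onto $\langle a\rangle$ which upgrades to the desired retraction $r$.

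The main obstacle is exactly this passage from the purely model-theoretic hypothesis ``$a$ realizes $p_0$'' to the geometric conclusion that $\F_n$ retracts onto $\langle a\rangle$; the rest of the argument is soft. This is the free-group instance of Proposition \ref{ElementsOfPrimitiveTypeIntro}: a free group that carries a hyperbolic tower structure over $\langle a\rangle$ retracts onto $\langle a\rangle$, hence has $\langle a\rangle$ as a free factor, so $a$ is primitive. Thus once Proposition \ref{ElementsOfPrimitiveTypeIntro} is available one may simply specialize it to $G=\F_n$; but in the present, earlier context the point is to carry out this step by hand, with only the shortening argument in play, and that is where the real work lies.
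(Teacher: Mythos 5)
Your reduction rests on a false lemma: it is not true that a retract of a free group is a free factor, and the cyclic case, which is exactly the one you need, is where this fails most visibly. In $\F_2=\langle a,b\rangle$ take $w=a^2b^3$ and define $r$ by $r(a)=w^{-1}$, $r(b)=w$; then $r(w)=w^{-2}w^{3}=w$, so $r$ is a retraction of $\F_2$ onto $\langle w\rangle$, yet $w$ is not primitive (the quotient of $\F_2$ by the normal closure of $w$ surjects onto $\Z/2\ast\Z/3$, hence is non-abelian, whereas killing a primitive element yields $\Z$). In fact $\langle w\rangle$ is a retract of $\F_n$ as soon as the exponent sums of $w$ are coprime, a condition far weaker than primitivity. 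The auxiliary claim that a retraction gives $\F_n=\langle a\rangle\ast\ker r$ is also wrong: even when $\langle a\rangle$ genuinely is a free factor, $\ker r$ is the normal closure of a complement, an infinite-rank normal subgroup, and no free decomposition of this form exists. So even if your shortening-argument sketch did produce a retraction $\F_n\to\langle a\rangle$, primitivity would not follow; and the same flawed step reappears at the end, where you specialize Proposition \ref{ElementsOfPrimitiveType} via ``tower $\Rightarrow$ retraction $\Rightarrow$ free factor''.

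What the geometric machinery must deliver is strictly stronger than a retraction, namely that $\F_n$ carries a hyperbolic tower structure over $\langle a\rangle$ (Proposition \ref{ElementsOfPrimitiveType}); primitivity then follows not from the retraction it provides, but from Proposition \ref{NoHypFloorInFreeGroup}, which says a free group admits no hyperbolic floor structure, so the tower is trivial and $\F_n=\langle a\rangle\ast F\ast S_1\ast\dots\ast S_p$; since closed surface groups are not free, $p=0$ and $\langle a\rangle$ is a free factor. Note also that this statement is not proved in the paper at all: it is quoted from \cite{PillayGenericity}, and Pillay's argument goes through the characterization of elementary subgroups of finitely generated free groups as free factors from \cite{PerinElementary}, rather than through a direct limit-tree/shortening construction attached to the generic type. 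Your opening observation that $x\neq 1$ lies in $p_0$ is correct but peripheral; the essential passage from ``$a$ realizes $p_0$'' to the tower structure over $\langle a\rangle$ is only gestured at, and the reduction you propose in its place does not close the argument.
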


So the primitives of $\F_n$ are exactly the elements which realize $p_0$ in $\F_n$. The proof uses essentially 
the characterization given in \cite{PerinElementary} of elementary embeddings in finitely generated free groups.  
Some more results concerning $p_0$ are proved in \cite{PillayForking, PillayGenericity, SklinosGenericType}.

\section{Modular group and JSJ decomposition} \label{ModularGroupAndJSJSec}
Let $G$ be a finitely generated group. Let $\Lambda$ be a one-edge cyclic splitting of $G$, that is, a decomposition of $G$ as an amalgamated product $A*_C B$ or as an HNN extension $A *_C$ over an infinite cyclic subgroup $C$ of $G$. Recall that a subgroup $H$ of $G$ is said to be elliptic in $\Lambda$ if it is contained in a conjugate of $A$ or $B$ (respectively in a conjugate of $A$ for the case of an HNN extension).

\begin{defi} The Dehn twist of $\Lambda$ by an element $\gamma$ of the center of $C$ is the automorphism which restricts to the identity on $A$ and to conjugation by $\gamma$ on $B$ (respectively the automorphism of $G$ which restricts to the identity on $A$ and sends the stable letter $t$ to $t \gamma$). 
\end{defi}

In this section we will often assume the group $G$ to be torsion-free hyperbolic, though the definitions and results we give extend to more general cases. However, we will only use them in this specific setting which simplifies both the definitions and the results.   

\begin{defi} Let $G$ be a freely indecomposable torsion-free hyperbolic group.
The (cyclic) modular group $\Mod(G)$ of $G$ is the subgroup of $\Aut(G)$ generated by Dehn twists of one-edge cyclic splittings of $G$. 

Let $G$ be a torsion-free hyperbolic group which is freely indecomposable with respect to some subgroup $H$.
The (cyclic) modular group $\Mod_H(G)$ of $G$ relative to $H$ is the subgroup of $\Aut_H(G)$ generated by the Dehn twists fixing $H$ of the one-edge cyclic splittings of $G$ in which the subgroup $H$ is elliptic.
\end{defi}

It is a results of Rips and Sela (see \cite{RipsSelaHypI}) that in the torsion-free hyperbolic case, the modular group has finite index in the group of automorphisms.

The idea of the JSJ decomposition is to encode all possible splittings of $G$ over a given class ${\cal A}$ of subgroups in a single graph of group $\Gamma$ (for definitions and results about graphs of groups see \cite{SerreTrees}). Various results proving the existence of a JSJ decomposition and describing its properties for different hypotheses on $G$ and ${\cal A}$ are obtained in \cite{RipsSelaJSJ, FujiwaraPapasoglu, DunwoodySageev, Bowditch}. We use the unifying framework of Guirardel and Levitt, developed in \cite{GuirardelLevittJSJI, GuirardelLevittJSJII}. We now summarize briefly the definitions and results we will use. 

Given a class ${\cal A}$ of subgroups of $G$ which is stable under conjugation and taking subgroups, we consider the class of all ${\cal A}$-trees, namely all the simplicial trees endowed with an action of $G$ whose edge stabilizers are in the class ${\cal A}$. One might also be interested in the JSJ relative to some subgroup $H$ of $G$: in this case, one considers only the ${\cal A}$-trees in which $H$ is elliptic (that is, fixes a vertex). We will call such trees $({\cal A}, H)$-trees.

Given ${\cal A}$-trees $T$ and $T'$, we say that $T$ \textbf{dominates} $T'$ if there exists a $G$-equivariant continuous map $T \to T'$, and that $T$ \textbf{refines} $T'$ (or \textbf{collapses to} $T'$) if this map consists in collapsing some of the edges of $T$ to vertices. The \textbf{deformation space} of an ${\cal A}$-tree $T$ is the set of all ${\cal A}$-trees $T'$ such that $T$ dominates $T'$ and $T'$ dominates $T$. An ${\cal A}$-tree is \textbf{universally elliptic} if its edge stabilizers are elliptic in every ${\cal A}$-tree. If $T$ is a universally elliptic ${\cal A}$-tree, and $T'$ is any ${\cal A}$-tree, it is easy to see that there is a tree $\hat{T}$ which refines $T$ and dominates $T'$ (see \cite[Lemma 3.2]{GuirardelLevittJSJI}).

A \textbf{JSJ tree} is a universally elliptic ${\cal A}$-tree which dominates any other universally elliptic tree. All JSJ-trees belong to a same deformation space, that we denote ${\cal D}_{JSJ}$. Guirardel and Levitt show that if $G$ is finitely presented, the JSJ deformation space always exists, without any restrictions on ${\cal A}$ (see \cite[Theorem 4.2]{GuirardelLevittJSJI}). This extends to the relative case (replacing 'every ${\cal A}$-tree' by  'every $({\cal A}, H)$-tree' where appropriate in the definitions), provided $H$ is also finitely generated.

The real heart of the theory of JSJ decompositions is to describe the properties of the JSJ trees, and sometimes to find a canonical tree in ${\cal D}_{JSJ}$. A vertex stabilizer in a (relative) JSJ tree is said to be \textbf{rigid} if it is elliptic in any ${\cal A}$-tree (respectively any $({\cal A}, H)$-tree), and \textbf{flexible} if not. Interpreting results of \cite{RipsSelaJSJ, DunwoodySageev, FujiwaraPapasoglu} in this framework gives a description of flexible vertices under some conditions on ${\cal A}$ (see \cite[Theorem 7.7]{GuirardelLevittJSJI} and \cite[Theorem 7.38]{GuirardelLevittJSJI} for the relative case). We will give this description in the special case where $G$ is torsion-free hyperbolic and freely indecomposable (respectively freely indecomposable with respect to the finitely generated subgroup $H$). We first give

\begin{defi} \label{SurfaceTypeVertex} Fix a group $G$. Let $T$ be a tree endowed with an action of $G$ (respectively an action of $G$ in which $H$ is elliptic). We say that a vertex $v$ of $T$ is of surface type if 
\begin{itemize}
    \item the stabilizer of $v$ is the fundamental group $S$ of a hyperbolic surface with boundary $\Sigma$;
    \item each incident edge stabilizer is contained in a boundary subgroup of $S$;
    \item each maximal boundary subgroup of $S$ contains as a subgroup of finite index the stabilizer of an incident edge (or a conjugate of $H$ in the relative case).
\end{itemize}
If $\Lambda$ is the graph of group corresponding to $T$, we also say that the  vertex of $\Lambda$ corresponding to $v$ is of surface type.
\end{defi}

We now have:
\begin{thm} \label{JSJ} Let $G$ be torsion-free hyperbolic and freely indecomposable (respectively freely indecomposable with respect to a non trivial finitely generated subgroup $H$). Let $T$ be a (relative) JSJ tree over the class ${\cal A}$ of cyclic subgroups. Then the flexible vertices of $T$ are of surface type. 
\end{thm}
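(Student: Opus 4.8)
The statement to prove is Theorem \ref{JSJ}: for a torsion-free hyperbolic group $G$ that is freely indecomposable (possibly relative to a finitely generated subgroup $H$), and $T$ a (relative) JSJ tree over the class of cyclic subgroups, the flexible vertices of $T$ are of surface type.

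The plan is to derive this as a specialization of the general Guirardel–Levitt description of flexible vertices, namely \cite[Theorem 7.7]{GuirardelLevittJSJI} (and \cite[Theorem 7.38]{GuirardelLevittJSJI} in the relative case), which was explicitly cited in the excerpt. That general theorem says that, under suitable acylindricity/smallness hypotheses on the class $\mathcal{A}$ and on $G$, a flexible vertex of a JSJ tree over $\mathcal{A}$ is "quadratically hanging" (QH): its stabilizer is the fundamental group of a compact surface (possibly with boundary and possibly non-orientable), incident edge groups are contained in boundary subgroups, and the extended boundary subgroups are precisely the images of the incident edge groups (plus the conjugates of $H$ in the relative case).

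First I would check that our setting satisfies the hypotheses of the general theorem. The class $\mathcal{A}$ of cyclic (i.e. infinite cyclic, together with the trivial group) subgroups is stable under conjugation and under taking subgroups, and it is a "small" class in the sense required; moreover in a torsion-free hyperbolic group, cyclic subgroups are malnormal up to finite index and the group is acylindrical over $\mathcal{A}$, so the relevant finiteness and accessibility hypotheses (Theorem \ref{FmElementaryInFn}-type conditions on $G$, here hyperbolicity plus finite presentation) hold. In the relative case one additionally needs $H$ finitely generated, which is assumed, and $G$ freely indecomposable relative to $H$, which guarantees there is no $\mathbb{Z}$-splitting forced by a free product; this is what is needed to ensure the deformation space $\mathcal{D}_{JSJ}$ exists and the flexible vertices are controlled. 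Second I would invoke the general theorem to conclude that each flexible vertex $v$ is QH, so its stabilizer $S$ is a surface group with boundary, with each incident edge stabilizer contained in a boundary subgroup. Third, I would verify the third bullet of Definition \ref{SurfaceTypeVertex}: that each maximal boundary subgroup contains, with finite index, an incident edge stabilizer (or a conjugate of $H$). This is exactly the "used boundary" condition in the Guirardel–Levitt QH description — a QH vertex in a JSJ tree has no "unused" boundary component, because an unused boundary curve would give a further cyclic splitting refining $T$ and showing $v$ is not actually a single JSJ vertex; I would cite the corresponding clause of \cite[Theorem 7.7]{GuirardelLevittJSJI} or deduce it from universal ellipticity and maximality of the JSJ deformation space. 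Finally, since $G$ is torsion-free, the surface $\Sigma$ carries no cone points and $S$ is genuinely a surface group, so $v$ is of surface type in the precise sense of Definition \ref{SurfaceTypeVertex}.

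The main obstacle I expect is bookkeeping rather than a deep new idea: translating the somewhat elaborate hypotheses of the general Guirardel–Levitt theorems into the present hyperbolic, cyclic-edge-group setting, and in particular matching their notion of QH vertex (which allows Fuchsian groups with torsion and various boundary conventions) with the clean Definition \ref{SurfaceTypeVertex} used here. The relative case requires extra care: one must check that the hypothesis "freely indecomposable relative to $H$" plus "$H$ finitely generated" is exactly what makes \cite[Theorem 7.38]{GuirardelLevittJSJI} applicable, and that the third bullet is stated correctly with "or a conjugate of $H$" accounting for the boundary component along which $H$ may be attached. Since the excerpt permits me to quote these earlier/cited results wholesale, the proof is essentially a careful citation-and-specialization argument, and I would keep it short: state that the hypotheses are met, invoke the theorem, and check the three bullets of the definition.
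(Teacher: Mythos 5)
Your proposal matches the paper's own treatment: the paper does not prove this statement independently, but presents it exactly as the specialization of Guirardel--Levitt's description of flexible (QH) vertices (\cite[Theorems 7.7 and 7.38]{GuirardelLevittJSJI}, interpreting \cite{RipsSelaJSJ, DunwoodySageev, FujiwaraPapasoglu}) to the torsion-free hyperbolic, cyclic-edge-group setting, which is precisely your citation-and-specialization argument, including the check that no unused boundary components occur. (Your parenthetical appeal to Theorem \ref{FmElementaryInFn} is a slip --- that theorem concerns elementary embeddings of free factors --- but it plays no role in the argument.)
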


The first property of JSJ trees we will use is that its vertex groups are preserved in some sense by modular automorphisms.
\begin{lemma} \label{ModAndJSJ} Let $G$ be torsion-free hyperbolic and freely indecomposable (respectively freely indecomposable with respect to a non trivial finitely generated subgroup $H$). Let $T$ be a (relative) JSJ tree over the class ${\cal A}$ of cyclic subgroups. An element of $\Mod(G)$ (respectively $\Mod_H(G)$) restricts to conjugation on each rigid vertex stabilizer of $T$, and sends flexible vertex stabilizers isomorphically on conjugates of themselves.
\end{lemma}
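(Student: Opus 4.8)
The plan is to reduce to the case of a single Dehn twist, since $\Mod(G)$ (respectively $\Mod_H(G)$) is generated by such twists and the conclusion — "restricts to a conjugation on each rigid vertex stabilizer, and sends each flexible vertex stabilizer to a conjugate of itself" — is stable under composition (a composite of maps, each restricting to conjugation on a given subgroup up to conjugacy, again does so, because conjugations form a group and the image of a conjugate of a surface group under a conjugation is again a conjugate of that surface group). So fix a one-edge cyclic splitting $\Lambda = A *_C B$ or $A*_C$ of $G$ (with $H$ elliptic, in the relative case), let $\gamma$ be the corresponding twisting element, and let $\tau = \tau_{\Lambda,\gamma}$ be the associated Dehn twist. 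Let $v$ be a vertex of the JSJ tree $T$ with stabilizer $G_v$.

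First I would use that the edge group $C$ of $\Lambda$ is cyclic, hence $\Lambda$ is an ${\cal A}$-splitting (respectively, since $H$ is elliptic, an $({\cal A},H)$-splitting). Because $T$ is a JSJ tree, its edge stabilizers are universally elliptic, so in particular the edge stabilizer of any edge incident to $v$ — call it a generator $c'$ of an edge group $C'$ — is elliptic in the Bass–Serre tree $S$ of $\Lambda$. Now consider the rigid case: $G_v$ is elliptic in every ${\cal A}$-tree (respectively $({\cal A},H)$-tree), in particular in $S$, so $G_v$ fixes a vertex of $S$, i.e. $G_v$ is contained in a conjugate $gAg^{-1}$ or $gBg^{-1}$ of one of the factors. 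The Dehn twist $\tau$ restricts to the identity on $A$ and to conjugation by $\gamma$ on $B$ (and is defined so that it restricts to an inner automorphism on each conjugate of a factor: on $gAg^{-1}$ it is conjugation by $g\gamma^{\pm}g^{-1}$-type elements — more precisely one checks that $\tau$ agrees with conjugation by some fixed element $\delta_g \in G$ on the whole of $gAg^{-1}$, and similarly on $gBg^{-1}$, because $\tau$ is inner on $A$, inner on $B$, and these patch up via the standard computation for the action of a Dehn twist on the Bass–Serre tree). Hence $\tau$ restricts to conjugation by a single element on $G_v$, which is the first assertion.

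For the flexible case, by Theorem \ref{JSJ} the flexible vertex $v$ has stabilizer $G_v$ equal to the fundamental group $\Sigma$ of a hyperbolic surface with boundary, with the incidence conditions of Definition \ref{SurfaceTypeVertex}. The point is that $\tau$ need not be inner on $G_v$ — but it does send $G_v$ into a conjugate of a vertex group of $T$. To see this, one uses that $\tau$ is an automorphism of $G$ induced by an automorphism of the graph-of-groups $\Lambda$ fixing the underlying graph, so it maps the ${\cal A}$-tree $S$ to an equivariantly isomorphic tree; composing, $\tau(G_v)$ is the stabilizer of $\tau$ applied to a point, and one shows $\tau(G_v)$ fixes a point of $T$ (because $\tau^{-1}(T)$ is again a JSJ tree, by the description of $\Mod$ preserving the JSJ deformation space — alternatively, because the image of a one-ended subgroup relatively quasiconvex/incident to the JSJ is again vertex-contained). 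Then $\tau(G_v)$ is contained in a vertex group $G_w$ of $T$; but $\tau(G_v) \cong G_v$ is a surface group which is not a free factor and is freely indecomposable, so by the structure of JSJ vertex groups it must be conjugate to a flexible vertex group, and a rank/Euler-characteristic count forces $\tau(G_v)$ to be conjugate to $G_v$ itself (not properly contained in a larger surface group, since edge groups are contained in boundary subgroups by Definition \ref{SurfaceTypeVertex}).

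The main obstacle I expect is the flexible case — specifically, verifying cleanly that a Dehn twist from an ${\cal A}$-splitting sends a flexible (surface-type) vertex group of the JSJ onto a conjugate of itself rather than merely into some vertex group. This is where one really needs the canonicity properties of the JSJ deformation space under $\Mod(G)$ (that modular automorphisms act on ${\cal D}_{JSJ}$, preserving it) together with the precise incidence conditions of Definition \ref{SurfaceTypeVertex}; the rigid case, by contrast, is essentially the elementary computation that a Dehn twist is inner on any subgroup elliptic in its defining splitting. In writing this up I would first state and prove the rigid case in full, then handle flexibility either by invoking the $\Mod$-invariance of ${\cal D}_{JSJ}$ or, if one wants to stay self-contained, by the direct Bass–Serre-theoretic argument sketched above.
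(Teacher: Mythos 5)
Your reduction to a single Dehn twist and your rigid case are fine, and they match the paper: if the vertex stabilizer $V$ is elliptic in the Bass--Serre tree $T_0$ of the defining one-edge splitting (which holds for every rigid vertex, since rigid stabilizers are elliptic in every ${\cal A}$-tree, respectively $({\cal A},H)$-tree), then the twist restricts to a conjugation on $V$. Note that this observation already disposes of any flexible vertex that happens to be elliptic in $T_0$ as well, so the only case needing work is a flexible $V$ that is \emph{not} elliptic in $T_0$ --- and this is exactly where your argument has a genuine gap. Invoking $\Aut$-invariance of the JSJ deformation space only gives you that $\tau(V)$ is elliptic in $T$, i.e.\ contained in some vertex stabilizer $G_w$, and at best (using that flexible stabilizers are a deformation-space invariant up to conjugacy) that $\tau$ \emph{permutes} the conjugacy classes of flexible vertex stabilizers. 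The statement you need --- and the statement the paper actually uses, in Remark \ref{RelToiIsPreretraction}, where the non-abelian image condition must be transferred for the \emph{same} flexible vertex group --- is that $\tau(V)$ is a conjugate of $V$ itself. Your ``rank/Euler-characteristic count'' does not deliver this: it cannot distinguish two flexible vertex groups carrying homeomorphic surfaces, and it does not rule out $\tau(V)$ being properly contained in a rigid vertex stabilizer (a surface group embeds in many one-ended groups, and Definition \ref{SurfaceTypeVertex} constrains the edge groups of $T$, not the position of an abstractly embedded surface subgroup). The parenthetical alternative (``relatively quasiconvex/incident subgroups stay vertex-contained'') is not an argument.

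The paper closes this case quite differently, by exploiting the specific form of the Dehn twist rather than canonicity of ${\cal D}_{JSJ}$: since $T$ is universally elliptic, the boundary subgroups of the surface-type group $V$ are elliptic in $T_0$, so by Morgan--Shalen (Proposition III.2.6 of \cite{MorganShalen}) the splitting induced on $V$ by its action on $T_0$ is dual to essential simple closed curves on $\Sigma$; hence, up to conjugacy, the twisting element $\gamma$ is a root of an element $c$ of $V$ representing such a curve. A short Bass--Serre argument ($\langle\gamma\rangle$ has finite orbit on $T$, so fixes a vertex $w$; if $w\neq v$ then $c$ fixes the path from $v$ to $w$, making $c$ a boundary element and contradicting essentiality) shows $\gamma$ actually lies in $V$, so the Dehn twist restricts to an automorphism of $V$ (a surface Dehn twist), and therefore sends $V$ isomorphically onto a conjugate of itself. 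If you want to keep your deformation-space route instead, you would still have to supply an argument that the induced permutation of conjugacy classes of flexible stabilizers is trivial for a Dehn twist, which in practice brings you back to an analysis of the action of $V$ on $T_0$ of the above kind.
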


\begin{proof} Let $T_0$ be the ${\cal A}$-tree corresponding to a one-edge cyclic splitting of $G$ along a cyclic subgroup $C$, and let $\tau$ be the Dehn twist corresponding to an element $\gamma$ of $C$. Let $v$ be a vertex of $T$ with vertex group $V$. If $v$ is elliptic in $T_0$ (in particular if $v$ is rigid), $\tau|_V$ is just a conjugation. 

Suppose now that $V$ is not elliptic in $T_0$: in particular, $v$ must be flexible, so it is of surface type. We will show that (some conjugate of) $\gamma$ represents a simple closed curve on the corresponding surface $\Sigma$. The boundary subgroups of $V$ are elliptic in any ${\cal A}$-tree since $T$ is universally elliptic. Thus by Proposition III.2.6 of \cite{MorganShalen}, $V$ inherits from its action on $T_0$ a splitting which is dual to a set of essential simple closed curves on the surface corresponding to $V$. Thus, up to conjugation, $\gamma$ is the root of an element $c$ of $V$ which represents an essential simple closed curve on $\Sigma$. Now the orbit of the vertex $v$ under $\langle \gamma \rangle$ is finite, so $\gamma$ must fix a vertex $w$ of $T$. If $v \neq w$, the path between $v$ and $w$ is fixed by $c$, so $c$ is a boundary element of $V$. But this contradicts essentiality of the simple closed curve it represents. Hence $\gamma$ fixes $v$, so it lies in $V$ and represents a simple closed curve on the corresponding surface. The Dehn twist $\tau$ by $\gamma$ is thus an automorphism of $V$, so $\tau$ sends $V$ isomorphically to a conjugate of itself.  
\end{proof}

We will pick among the trees in ${\cal D}_{JSJ}$ a particular JSJ tree with specific properties: the tree of cylinders. It turns out to be precisely the JSJ decomposition given by Bowditch in \cite{Bowditch}. In our setting, the cylinders of an ${\cal A}$-tree $T$ (respectively of an $({\cal A}, H)$-tree) are the equivalence classes of edges under the relation given by commensurability of stabilizers. Cylinders are subtrees of $T$, and the tree of cylinders $T_C$ of $T$ (which can be shown to depend only on the deformation space of $T$) is built as follows: its set of vertices is the union of the set $C(T)$ of cylinders of $T$ and the set $\partial C(T)$ of vertices of $T$ which lie in two distinct cylinders, and there is an edge between a vertex $w$ of $C(T)$ and a vertex $v$ of $\partial C(T)$ if $v$ lies in the cylinder corresponding to $w$. The following properties of $T_C$ are given in \cite[Theorem 2]{GuirardelLevittTreeOfCylinders}.

\begin{thm} \label{CylinderJSJ} Let $G$ be torsion-free hyperbolic and freely indecomposable (respectively freely indecomposable with respect to a non trivial finitely generated subgroup $H$). Let ${\cal A}$ be the class of cyclic subgroups of $G$. Let $T_C$ be the tree of cylinders of an ${\cal A}$-tree $T$. Then $T_C$ is itself an ${\cal A}$-tree which belongs to the same deformation space as $T$, in particular if $T$ is a JSJ tree, so is $T_C$. Moreover, $T_C$ is $2$-acylindrical.
\end{thm}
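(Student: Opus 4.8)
The plan is to cite the general machinery of Guirardel--Levitt on trees of cylinders and then check that our hypotheses place us in a situation where their results apply. The key point is that the construction of the tree of cylinders depends on an \emph{admissible} equivalence relation on the class ${\cal A}$; here ${\cal A}$ is the class of cyclic (equivalently, virtually cyclic, since $G$ is torsion-free hyperbolic) subgroups, and the relation is commensurability. So the first step is to verify that commensurability is admissible on the class of infinite cyclic subgroups of a torsion-free hyperbolic group: it is clearly reflexive, symmetric, transitive, and conjugation-invariant, and the nontrivial conditions — that two commensurable subgroups have the same commensurator-type behaviour, and that the relevant subgroups of ${\cal A}$ are exactly those which are commensurable to an edge stabilizer — follow from the fact that in a torsion-free hyperbolic group the centralizer of any nontrivial element is infinite cyclic, so that commensurability classes of cyclic subgroups are detected by their maximal cyclic overgroups.

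Once admissibility is established, Theorem 2 of \cite{GuirardelLevittTreeOfCylinders} directly gives that $T_C$ is again an ${\cal A}$-tree (edge stabilizers of $T_C$ are commensurators of edge stabilizers of $T$, hence still cyclic in our setting by the centralizer remark above), that $T_C$ lies in the same deformation space ${\cal D}_{JSJ}$ as $T$ — hence is itself a JSJ tree when $T$ is — and that $T_C$ depends only on that deformation space. In the relative case one invokes the relative version of the same construction, noting that since $H$ is elliptic in $T$ it remains elliptic in any collapse/blow-up within the deformation space and in particular in $T_C$. The acylindricity statement is the last part of their Theorem 2 together with the discussion of acylindricity of trees of cylinders: for the commensurability relation on virtually cyclic subgroups of a hyperbolic group, the tree of cylinders is $2$-acylindrical, essentially because a path of length $\geq 3$ in $T_C$ fixed by an element $g$ would force $g$ to lie in two non-commensurable maximal cyclic subgroups, which is impossible in a torsion-free hyperbolic group.

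The main obstacle — really the only thing requiring genuine verification rather than citation — is confirming that the general admissibility hypotheses of Guirardel--Levitt are met and, relatedly, pinning down the acylindricity constant as exactly $2$ in our setting; both reduce to the structure of centralizers and commensurators in torsion-free hyperbolic groups, namely that every nontrivial abelian (in particular cyclic) subgroup is contained in a unique maximal cyclic subgroup, which is malnormal-like in the sense that distinct maximal cyclic subgroups intersect trivially. Granting these standard facts, the theorem follows formally from the cited results, so the proof will consist of a short paragraph recording the admissibility check and then quoting \cite[Theorem 2]{GuirardelLevittTreeOfCylinders} and the corresponding relative statement.
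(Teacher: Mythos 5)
Your proposal is correct and matches the paper's treatment: the paper does not prove this statement but quotes it directly from \cite[Theorem 2]{GuirardelLevittTreeOfCylinders}, exactly as you propose, with the admissibility of the commensurability relation on cyclic subgroups of a torsion-free hyperbolic group being the only (routine) verification. So your plan of a short admissibility check followed by a citation of Guirardel--Levitt is essentially the same approach.
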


From now on, we will call any decomposition of a group $G$ corresponding to a (relative) JSJ tree \textbf{a} (relative) JSJ decomposition of $G$, but \textbf{the} (relative) JSJ decomposition of a group $G$ will refer to the decomposition corresponding to the tree of cylinders of the (relative) JSJ deformation space.

\begin{rmk} It is straightforward to see by construction of the tree of cylinders that in the tree of cylinders of the JSJ deformation space, all the stabilizers of edges incident to a flexible vertex have index $1$ in the corresponding boundary subgroup, and two incident edges whose stabilizers correspond to conjugate boundary subgroups are in the same orbit. 
\end{rmk}

Finally, we will need the following result.
\begin{prop} \label{ModGroupStabilizes} Let $G$ be a torsion-free hyperbolic group. Let $H$ be a subgroup of $G$ with respect to which $G$ is freely indecomposable. There exists a finitely generated subgroup $H_0$ of $H$ such that $G$ is freely indecomposable with respect to $H_0$, and $H$ is elliptic in the JSJ decomposition of $G$ relative to $H_0$. Moreover, we have $\Mod_{H_0}(G) = \Mod_H(G)$.
\end{prop}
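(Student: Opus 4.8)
The plan is to produce $H_0$ as a sufficiently large finitely generated "approximation" of $H$, obtained by iterating a Noetherian-type argument on the relevant JSJ decompositions. First I would observe that since $G$ is finitely generated and torsion-free hyperbolic, it is finitely presented, and one can exhaust $H$ by an increasing chain of finitely generated subgroups $H_1 \leq H_2 \leq \cdots$ with $\bigcup_n H_n = H$. Free indecomposability with respect to $H$ means that $G$ does not split freely relative to $H$; I would argue that this persists for $H_n$ once $n$ is large enough. Indeed, if $G$ split freely relative to $H_n$ for every $n$, one could (using accessibility of $G$, which is finitely presented, or using the fact that a hyperbolic group has finitely many conjugacy classes of finite-order vertex-group Grushko factors) extract a single free splitting of $G$ in which all $H_n$, and hence $H$, are elliptic — contradicting free indecomposability of $G$ relative to $H$. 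So fix $n_1$ with $G$ freely indecomposable relative to $H_{n_1}$.

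Next I would invoke the existence (Theorem \ref{JSJ} and the tree-of-cylinders construction, Theorem \ref{CylinderJSJ}) of the JSJ decomposition of $G$ relative to any such $H_n$, $n \geq n_1$, and compare the decompositions for successive $n$. The key point is a monotonicity/stabilization statement: enlarging the peripheral subgroup from $H_n$ to $H_{n+1}$ can only make the JSJ "simpler" (fewer cyclic splittings are allowed, since more subgroups must be elliptic), so the relative JSJ deformation spaces form a descending sequence under domination. A complexity argument — bounding, say, the number of edges/vertices or the Euler-characteristic contribution of surface-type vertices in the tree of cylinders, which is controlled because $G$ is a fixed hyperbolic group — shows this sequence stabilizes: there is $n_0 \geq n_1$ such that for all $n \geq n_0$ the relative JSJ of $G$ over $H_n$ is the same (lies in the same deformation space, with the same tree of cylinders). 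Set $H_0 = H_{n_0}$.

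It remains to check the three conclusions. Free indecomposability of $G$ relative to $H_0$ holds by the choice of $n_1 \leq n_0$. For ellipticity of $H$ in the JSJ of $G$ relative to $H_0$: each $H_n$ with $n \geq n_0$ is elliptic in the relative JSJ tree $T_0$ of $G$ over $H_0$ by construction (it is an $({\cal A}, H_n)$-tree, since that JSJ coincides with the one relative to $H_n$), i.e. $H_n$ fixes a vertex; since the $H_n$ are nested and $T_0$ has finitely many orbits of vertices with a uniform bound coming from acylindricity (Theorem \ref{CylinderJSJ}), the fixed-point sets $\mathrm{Fix}(H_n)$ form a nested descending family of nonempty subtrees whose intersection $\mathrm{Fix}(H) = \bigcap_n \mathrm{Fix}(H_n)$ is nonempty — here $2$-acylindricity prevents the fixed sets from "escaping to infinity". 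Finally, for the equality of modular groups: $\Mod_{H_0}(G)$ is generated by Dehn twists along one-edge cyclic splittings of $G$ in which $H_0$ is elliptic; by the stabilization, any such splitting also has $H_n$ elliptic for all $n \geq n_0$ — this is where one uses that the JSJ relative to $H_0$ and relative to $H$ agree, so that every cyclic one-edge splitting in which $H_0$ is elliptic refines to (or is dominated by) the common JSJ and hence has $H$ elliptic too. Conversely every cyclic splitting with $H$ elliptic has $H_0$ elliptic trivially, and the Dehn twists fixing $H$ are exactly those fixing $H_0$ by the same reasoning, giving $\Mod_H(G) = \Mod_{H_0}(G)$.

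The main obstacle I anticipate is the stabilization step: making precise the claim that the relative JSJ decompositions over the $H_n$ stabilize, and in particular controlling surface-type (flexible) vertices — a priori enlarging the peripheral structure could turn a rigid vertex flexible or vice versa, or split a surface vertex. The right tool is a complexity function on JSJ decompositions of the fixed hyperbolic group $G$ (number of vertices plus total "surface complexity"), which is bounded and can only decrease along the chain, forcing eventual constancy; combined with the uniqueness of the tree of cylinders in its deformation space, this pins down a single $H_0$ that works for all three requirements simultaneously.
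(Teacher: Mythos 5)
The central step of your argument --- that enlarging the peripheral subgroup from $H_n$ to $H_{n+1}$ makes the relative JSJ deformation spaces into a descending chain under domination, after which a bounded-complexity count forces stabilization --- is precisely the point that fails, and it is the actual difficulty of this proposition. The JSJ tree $T_n$ of $G$ relative to $H_n$ need not dominate $T_{n+1}$: a flexible (surface type) vertex group of $T_n$ is by definition non-elliptic in some $({\cal A},H_n)$-tree, and $T_{n+1}$ is such a tree, so that vertex group may well fail to be elliptic in $T_{n+1}$. Nor does $T_{n+1}$ dominate $T_n$ in general, since $T_n$ need not even be an $({\cal A},H_{n+1})$-tree ($H_{n+1}$ may be non-elliptic in it), so universal ellipticity of the vertex groups of $T_{n+1}$ says nothing about $T_n$. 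The paper's proof acknowledges exactly this ("$T_n$ does not necessarily dominate $T_m$ for $m\geq n$") and gets around it by constructing, through a double induction using universal ellipticity together with Dunwoody accessibility, refinements $\hat{T}_n$ of the $T_n$ such that $\hat{T}_n$ dominates $\hat{T}_{n+1}$; only once such a compatible sequence exists does a stabilization argument make sense, and even then it is not a bare complexity count: one passes to the trees of cylinders $S_n$ of the $\hat{T}_n$, uses functoriality of the tree of cylinders to obtain $G$-equivariant cellular maps $S_n \to S_{n+1}$, observes that the number of edge orbits is non-increasing, and that any genuine fold strictly increases the index of an edge stabilizer in its maximal cyclic subgroup, which can happen only finitely often. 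Your complexity bound, in the absence of domination maps between successive JSJs, would at best yield equality of numerical invariants; it does not give what the conclusion needs, namely that the $H_m$ for large $m$ (and hence $H$) are elliptic in the JSJ relative to $H_0$.

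Two smaller points. Your argument for $\Mod_{H_0}(G)=\Mod_H(G)$ invokes "the JSJ relative to $H$", but $H$ may be infinitely generated and no such JSJ is available here (the existence results used require a finitely generated peripheral subgroup); the paper argues instead that, once $H$ is known to be elliptic in $T_{n_0}$, the vertex group of $T_{n_0}$ containing $H_{n_0}$ is elliptic in every one-edge cyclic splitting of $G$ in which $H_{n_0}$ is elliptic, so $H$ is elliptic in every such splitting and $\Mod_{H_0}(G)\subseteq\Mod_H(G)$, the reverse inclusion being immediate. On the other hand, your opening reduction (free indecomposability of $G$ relative to $H_n$ for $n$ large, which the paper quotes from an existing lemma) and your nested-fixed-sets use of $2$-acylindricity for the ellipticity of $H$ are reasonable and consistent with the intended argument.
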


\begin{proof} Consider an exhausting chain of finitely generated subgroups $H_n$ of $H$. By Lemma 4.23 in \cite{PerinElementary} (see also Lemma 6.6 of \cite{GuirardelLevittJSJII}), we may assume that $G$ is freely indecomposable with respect to all the subgroups $H_n$.

%

For each $n$, we let $T_n$ be a tree in the JSJ deformation space of $G$ relative to $H_n$. The tree $T_n$ does not necessarily dominate $T_m$ for $m \geq n$, but we claim that there exists for each $n$ a refinement $\hat{T}_n$ of $T_n$ which dominates $\hat{T}_m$ for all $m \geq n$. 

Suppose we do have such a sequence $\hat{T}_n$. Let $S_n$ be the tree of cylinders of $\hat{T}_n$: it is in the same deformation space as $\hat{T}_n$. By functoriality of the trees of cylinders (see \cite[Proposition 4.11]{GuirardelLevittTreeOfCylinders}), there is a $G$-equivariant cell map $f_n: S_n \to S_{n+1}$. This shows that the number of orbits of edges of $S_n$ is non increasing, thus we may assume it is constant. If $f_n$ is not an isomorphism, there must be some foldings, hence some edges whose stabilizer is not maximal cyclic in the adjacent vertex stabilizers. Each folding increases strictly the index of an edge stabilizer in its maximal cyclic subgroup (recall we are in a torsion-free hyperbolic group): this can only happen finitely many times. The sequence $S_n$ eventually stabilizes, so there exists $n_0$ such that $H_n$ is elliptic in $S_{n_{_0}}$, and thus in $T_{n_{_0}}$, for all $n$ large enough.

Finally, the vertex group containing $H_{n_{_0}}$ of the JSJ tree $T_{n_{_0}}$ is elliptic in any one edge cyclic splitting of $G$ in which $H_{n_{_0}}$ is elliptic. This means that $H$ is elliptic in any such splitting, which implies that $\Mod_{H_{n_{_0}}}(G) \subseteq \Mod_H(G)$. The other inclusion is immediate.  

There remains only to show that we can build the sequence $\hat{T}_n$. The idea is the following: we start by refining $T_1(1) = T_1$ to a tree $T_1(2)$ which dominates $T_2$. Then, we refine $T_2$ to a tree $T_2(3)$ which dominates $T_3$. We now want to see that we can refine $T_1(2)$ to a tree $T_1(3)$ which dominates also $T_2(3)$. 

%
\begin{center}
\xymatrix{
&\vdots&\vdots                    & \vdots                  & \vdots                     &  \\
&T_k(k)&T_k(k+1)\ar@{->>}[l]\ar[d]& \ldots\ar@{->>}[l]\ar[d]&T_k(n)\ar@{->>}[l]\ar[d]    & \ar@{-->>}[l] \hspace{0.2cm}{\mathbf ?}\hspace{0.2cm} \ar@{-->}[d]  \\
&      &T_{k+1}(k+1)              & \ldots\ar@{->>}[l]\ar[d]&T_{k+1}(n)\ar@{->>}[l]\ar[d]&T_{k+1}(n+1) \ar@{->>}[l]\ar[d]\\
&      &                          &\vdots     \ar[d]        &\vdots      \ar[d]          &\vdots \ar[d]    \\
&      &                          &T_{n-1}(n-1)             &T_{n-1}(n)\ar@{->>}[l]\ar[d]&T_{n-1}(n+1) \ar@{->>}[l]\ar[d] \\
&      &                          &                         &T_{n}(n)                    &T_{n}(n+1)\ar@{->>}[l]\ar[d] \\
&      &                          &                         &                            &T_{n+1}(n+1)  \\
}
\end{center}

More formally, we want to build $({\cal A}, H_k)$-trees $T_k(n)$ for all $n \geq k$ such that 
\begin{enumerate}
\item \label{UnivEllCon} $T_k(n)$ is $({\cal A}, H_k)$-universally elliptic;
\item $T_k(n)$ dominates $T_{k+1}(n)$;
\item $T_k(n)$ is a refinement of $T_k(n-1)$;
\end{enumerate}
(see the commutative diagram, where double arrows represent collapse).

We proceed by (upwards) induction on $n$. We let $T_1(1) = 1$. Suppose we have built the trees $T_k(j)$ for $k \leq j \leq n$. We now want to build the trees $T_k(n+1)$ for $k \leq n+1$. We start by letting $T_{n+1}(n+1) = T_{n+1}$: it satisfies \ref{UnivEllCon}. Now, we proceed by (downwards) induction on $k$: suppose we have built the trees $T_j(n+1)$ for $n +1 \geq j \geq k+1$. The tree $T_k(n)$ is $({\cal A}, H_k)$-universally elliptic by induction hypothesis, and $T_{k+1}(n+1)$ is a $({\cal A}, H_k)$-tree, so we can refine $T_k(n)$ to a tree $T_k(n+1)$ dominating $T_{k+1}(n+1)$. The edge groups of $T_k(n+1)$ which are not already edge groups of $T_k(n)$ are contained in edge groups of $T_{k+1}(n+1)$, which by induction on $k$ are elliptic in any $({\cal A}, H_{k+1})$-tree. Thus $T_k(n+1)$ satisfies condition \ref{UnivEllCon}.

For each $k$, the sequence of refinements $(T_k(j))_{j \in \N}$ eventually stabilizes (by Dunwoody accessibility, see for example Proposition 4.3 of \cite{GuirardelLevittJSJI}). Thus we get a sequence $\hat{T}_n$ of refinements of the JSJ trees $T_n$ for which $\hat{T}_n$ dominates $\hat{T}_{n+1}$.  
\end{proof}

\section{Factor sets and test elements} \label{FactorSetSec}
The following results, whose proofs are all based on the shortening argument, describe various properties of homomorphisms $G \to \Gamma$ between torsion-free hyperbolic groups, and will therefore be crucial in understanding homogeneity properties of such groups. They are all variations on results proved in \cite{Sel1} for the free case and in \cite{Sel7} for the general torsion-free hyperbolic case. Note that most of these results are valid if we only assume $G$ to be finitely generated, but this would require a more general definition of the modular group, and some subtle arguments are required to deal with the axial components in the shortening. 


The first result deals with embeddings $G \hookrightarrow \Gamma$.
\begin{thm} \label{FiniteInjections} Let $G$ and $\Gamma$ be torsion-free hyperbolic groups. Let $H$ be a subgroup of $G$ with respect to which $G$ is freely indecomposable, and let $f:H \to \Gamma$ be an embedding of $H$ into $\Gamma$.

Then there exists a finite set $\{i_j: G \to \Gamma\}_{1 \leq j \leq s}$ of embeddings of $G$ into $\Gamma$ such that for any embedding $\theta: G \to \Gamma$ restricting to $f$ on $H$, there exists an element $\sigma$ of $\Mod_{H}(G)$ and an element $\gamma$ in the centralizer $Z(f(H))$ of $f(H)$ such that $\theta = \Conj(\gamma) \circ i_j \circ \sigma$ for some $j$. 
\end{thm}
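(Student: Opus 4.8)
The plan is to argue by contradiction using the standard limit-group / Rips machine argument (the shortening argument), exactly in the style of \cite{Sel1} and \cite{Sel7}. Suppose no finite set of embeddings with the stated property exists. Then one can extract an infinite sequence $(\theta_n : G \to \Gamma)_{n \in \N}$ of embeddings, all restricting to $f$ on $H$, such that no $\theta_n$ is of the form $\Conj(\gamma) \circ i \circ \sigma$ with $i$ coming from any previously chosen finite list, $\sigma \in \Mod_H(G)$, $\gamma \in Z(f(H))$; in particular, after passing to a subsequence, the $\theta_n$ are pairwise ``non-equivalent'' in this sense, and one may assume that for each $n$ the homomorphism $\theta_n$ is shortest in its class $\{\Conj(\gamma)\circ\theta_n\circ\sigma : \sigma \in \Mod_H(G), \gamma \in Z(f(H))\}$ with respect to some fixed finite generating set of $G$ and the word metric on $\Gamma$ (using that $\Mod_H(G)$ has finite index in $\Aut_H(G)$ by Rips--Sela, so that shortest representatives exist). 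Rescaling the actions of $G$ on the Cayley graph of $\Gamma$ via $\theta_n$ and passing to a Gromov--Hausdorff limit, one obtains an action of $G$ on a real tree $Y$; since the $\theta_n$ are injective and $\Gamma$ is hyperbolic, the stable kernel is trivial (or one quotients by it and notes the quotient must be $G$ itself because there are infinitely many distinct embeddings factoring through it — here freely indecomposability with respect to $H$ is what forbids a free-product splitting of the limit), so $G$ acts on $Y$ with the usual acylindricity/stability properties inherited from hyperbolicity of $\Gamma$. Because $f$ is a fixed embedding, $H$ fixes a point of $Y$.

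Next I would apply the structure theory of stable actions on real trees (Rips' theorem, in the relative form keeping $H$ elliptic) to decompose $Y$ into its simplicial, surface (interval-exchange/IET), and axial pieces, and read off from it a graph-of-groups decomposition $\Lambda$ of $G$ over cyclic subgroups in which $H$ is elliptic. Here one uses that edge stabilizers of the limit action are cyclic — this follows from the acylindricity coming from $\Gamma$ hyperbolic and from $G$ being freely indecomposable rel $H$, which rules out trivial edge stabilizers. This $\Lambda$ feeds the shortening argument: the key proposition (again Rips--Sela, \cite{Sel1}, \cite{Sel7}) is that for $n$ large the homomorphism $\theta_n$ can be precomposed with a nontrivial element $\sigma_n \in \Mod_H(G)$ coming from Dehn twists / surface homeomorphisms supported on $\Lambda$ so as to strictly shorten it — unless the limit action is trivial, i.e. $Y$ is a point. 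The delicate point is handling the axial components: a priori shortening on an axial piece is not possible and one must argue that no axial pieces occur, which is where the comment in the excerpt about ``subtle arguments to deal with axial components'' applies; in our setting $Z(f(H))$ (the centralizer into which the conjugating element $\gamma$ is allowed to vary) is precisely what is needed to absorb the would-be axial behaviour, so one shows any axial part is a single line on which $G$ acts through an abelian quotient, contradicting free indecomposability rel $H$ unless $G$ itself is this abelian group, a case excluded (or handled trivially) by hyperbolicity and the hypothesis. The contradiction with minimality of the $\theta_n$ then forces $Y$ to be a point.

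Once $Y$ is a point, the actions of $G$ via $\theta_n$ on $\Gamma$ do not converge to an unbounded limit; by the standard ``no unbounded limit $\Rightarrow$ finitely many conjugacy classes'' dichotomy (local finiteness of $\Gamma$ together with the Bestvina--Paulin argument run in reverse, or directly: boundedness of the rescaled actions means the $\theta_n(g)$, for $g$ in the fixed generating set, lie in a bounded — hence finite — subset of $\Gamma$ up to conjugacy by an element, and up to the modular and centralizer moves), there are only finitely many possibilities for $\theta_n$ modulo $\Conj(Z(f(H))) \times \Mod_H(G)$. Choosing representatives $i_1, \dots, i_s$ of these finitely many classes (discarding the non-injective ones, which cannot occur since each $\theta_n$ is an embedding) gives the desired finite set, and we have reached the contradiction that completes the proof.

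I expect the main obstacle to be the axial-component analysis in the limit tree: establishing that, given the fixed restriction $f$ on $H$ and the freedom to adjust by elements of $Z(f(H))$, no genuine axial part survives (or that it contributes only a controlled, already-accounted-for ambiguity), so that the shortening argument applies cleanly. The simplicial and surface parts are routine given Theorem \ref{JSJ} and the structure of $\Mod_H(G)$; the extraction of the limit tree and the cyclicity of edge stabilizers are standard given that $\Gamma$ is torsion-free hyperbolic and $G$ is freely indecomposable rel $H$.
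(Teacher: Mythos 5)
Your overall strategy---pass to shortest representatives in each class under precomposition by $\Mod_H(G)$ and postcomposition by $\Conj(Z(f(H)))$, extract a rescaled limit tree \`a la Bestvina--Paulin, analyse it by Rips theory relative to $H$, and shorten---is exactly the route the paper takes: it packages the limit/shortening step as Proposition \ref{ShorteningQuotientsAreProper} and then deduces the theorem from the remark that injective morphisms have trivial stable kernel, so there are only finitely many \emph{short} injective morphisms restricting to $f$. However, your outline has a genuine gap at the degenerate case of the limit action, tied to a misreading of what $Z(f(H))$ is for. When the chosen representatives are pairwise distinct, the scaling constants $\mu_n=\max_{g\in S}|\theta_n(g)|_{\Sigma}$ tend to infinity, so the limit tree is never ``a point'': the basepoint is displaced by exactly $1$. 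Your fallback (``once $Y$ is a point, the rescaled actions are bounded, hence finitely many $\theta_n$ up to the moves'') therefore addresses a case that cannot occur, while the case that genuinely must be excluded is that the action, though nontrivial at the basepoint, has a global fixed point elsewhere (minimal subtree reduced to a point), in which case no shortening is available. This can happen a priori precisely when $H$ is trivial or cyclic, and ruling it out is exactly where the conjugation-by-$Z(f(H))$ half of the shortness condition is used: if $y$ were a global fixed point, then $H$ would fix the arc from the basepoint to $y$, hence be abelian, hence cyclic or trivial; approximating $y$ by vertices lying in $Z(f(H))$ (arbitrary vertices if $H$ is trivial, powers of $f(h)$ if $H=\langle h\rangle$) then yields a point of $Z(f(H))$ with strictly smaller displacement, contradicting shortness. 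Your proposal never performs this step, and instead assigns $Z(f(H))$ the job of ``absorbing axial behaviour,'' which is not its role.

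Concerning the axial components: since $G$ is torsion-free hyperbolic, its abelian subgroups are cyclic, and a cyclic group cannot act on a line with dense orbits, so axial pieces simply do not arise in this setting (arc stabilizers of the limit action are abelian and tripod stabilizers trivial). The caveat in the paper about ``subtle arguments for the axial components'' concerns the more general situation where $G$ is only assumed finitely generated; here you may dismiss them, but not via $Z(f(H))$. A minor point: shortest representatives in each class exist simply because word lengths are non-negative integers; the finite-index statement for the modular group in $\Aut_H(G)$ is not needed for this.
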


For monomorphisms, we get
\begin{thm} \label{RelativeCoHopf} Let $\Gamma$ be a torsion-free hyperbolic group which is freely indecomposable with respect to a non trivial subgroup $H$. Then any injective homomorphism $\Gamma \to \Gamma$ which restricts to the identity on $H$ is an isomorphism.
\end{thm}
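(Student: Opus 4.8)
The plan is to deduce Theorem~\ref{RelativeCoHopf} from the factor-set result Theorem~\ref{FiniteInjections} applied in the special case $G = \Gamma$ and $f = \id_H$. Let $\theta \colon \Gamma \to \Gamma$ be any injective homomorphism restricting to the identity on $H$. Since $\Gamma$ is freely indecomposable with respect to $H$ (and torsion-free hyperbolic), Theorem~\ref{FiniteInjections} provides a finite set $\{i_j \colon \Gamma \to \Gamma\}$ of embeddings such that $\theta = \Conj(\gamma) \circ i_j \circ \sigma$ for some index $j$, some $\sigma \in \Mod_H(\Gamma)$, and some $\gamma \in Z(f(H)) = Z(H)$. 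The point is that modular automorphisms are genuine automorphisms and conjugations are automorphisms, so $\theta$ is an automorphism if and only if $i_j$ is; hence it suffices to show each $i_j$ is surjective — but in fact it is cleaner to run the argument directly on $\theta$ itself.

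First I would observe that, after composing on the right with $\sigma^{-1} \in \Aut_H(\Gamma)$ and on the left with $\Conj(\gamma)^{-1}$, we may assume $\theta = i_j$, i.e. $\theta$ lies in the fixed finite set of embeddings; equivalently, it suffices to prove the statement under the extra assumption that $\theta$ belongs to a prescribed finite collection of embeddings restricting to $\id_H$. Now consider the iterates $\theta, \theta^2, \theta^3, \dots$ — each $\theta^n$ is again an injective endomorphism of $\Gamma$ restricting to $\id_H$, so each $\theta^n$ decomposes as $\Conj(\gamma_n) \circ i_{j_n} \circ \sigma_n$. The image subgroups $\theta^n(\Gamma)$ form a descending chain $\Gamma \supseteq \theta(\Gamma) \supseteq \theta^2(\Gamma) \supseteq \cdots$, all containing $H$. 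The key step is to extract, from the finiteness of the factor set and from a chain/accessibility-type argument, that this chain must stabilize: once $\theta^n(\Gamma) = \theta^{n+1}(\Gamma)$ for some $n$, injectivity of $\theta$ (hence of $\theta^n$) forces $\theta(\Gamma) = \Gamma$, and we are done.

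To make the stabilization work, I would use either a rank/Hopf-property argument or a direct Euler-characteristic / complexity argument. Since $\Gamma$ is torsion-free hyperbolic it is in particular finitely generated and Hopfian, but Hopficity alone only tells us that a surjective endomorphism is injective, which is the wrong direction; so instead I expect one must argue as follows. The subgroups $i_j(\Gamma)$, being images of $\Gamma$ under embeddings restricting to $\id_H$, are finitely generated subgroups of $\Gamma$ containing $H$; as $\Gamma$ is co-Hopfian relative to nothing in general, one needs the specific structure. The cleanest route: each $i_j \colon \Gamma \to i_j(\Gamma) \le \Gamma$ is an isomorphism onto its image, and $i_j(\Gamma)$ is freely indecomposable with respect to $H$ (being isomorphic to $\Gamma$ via a map fixing $H$). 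If $i_j(\Gamma) \ne \Gamma$, then by standard facts (Grushko / the structure of one-ended torsion-free hyperbolic groups, or directly by comparing JSJ complexities relative to $H$) $i_j(\Gamma)$ is a proper subgroup of strictly smaller "size", contradicting the existence of the isomorphism $i_j$ fixing $H$ when iterated — this is where the finiteness of the factor set is essential, since it bounds the complexity uniformly and lets the descending chain of images of the $\theta^n$ terminate.

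The main obstacle I anticipate is precisely this last point: showing that a proper embedding $\Gamma \hookrightarrow \Gamma$ fixing $H$ cannot exist requires a genuine "no infinite descent" argument, and the honest way to get it is to combine Theorem~\ref{FiniteInjections} with an accessibility statement — e.g. that along the tower $\Gamma \supseteq \theta(\Gamma) \supseteq \theta^2(\Gamma) \supseteq \cdots$ one cannot strictly decrease a suitable complexity (number of edges in a relative JSJ, or first Betti number, or Euler characteristic of a surface-tower structure) infinitely often. Identifying the right monovariant and checking it is strictly monotone unless $\theta$ is onto is the crux; once that is in place, the rest is bookkeeping with the decomposition $\theta^n = \Conj(\gamma_n)\circ i_{j_n}\circ\sigma_n$ and the observation that $\Conj$ and $\Mod_H$ consist of automorphisms.
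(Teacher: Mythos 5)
Your overall shape (iterate $\theta$, use the finiteness coming from the shortening machinery, rule out an infinite strict descent of the images $\theta^n(\Gamma)$) is the right one, and your observation that stabilization $\theta^n(\Gamma)=\theta^{n+1}(\Gamma)$ plus injectivity of $\theta^n$ forces surjectivity of $\theta$ is correct. But the proposal has a genuine gap, and you name it yourself: ``identifying the right monovariant \dots is the crux.'' That crux is exactly the content of the theorem, so as written you have reduced the statement to itself. Worse, the monovariants you suggest (rank, first Betti number, Euler characteristic, number of JSJ edges) cannot work in the form you propose: every term $\theta^n(\Gamma)$ of the descending chain is isomorphic to $\Gamma$ via the injective map $\theta^n$, so any isomorphism-invariant complexity is \emph{constant} along the chain, and no strict-monotonicity argument of that kind can terminate the descent. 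The pigeonhole variant (infinitely many $\theta^n(\Gamma)$ conjugate to a single $i_j(\Gamma)$, hence a subgroup conjugate to a proper subgroup of itself) is closer in spirit, but ruling that configuration out in a torsion-free hyperbolic group is itself a nontrivial statement (it is standard only for quasiconvex subgroups, and the images $i_j(\Gamma)$ are not known to be quasiconvex), so it does not close the argument either.

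The paper avoids any complexity count. It argues by contradiction: if $\theta$ is injective and not onto, shorten each power, i.e.\ choose $\tau_n\in\Mod_H(\Gamma)$ and $\gamma_n\in Z(H)$ so that $\hat\theta_n=\Conj(\gamma_n)\circ\theta^n\circ\tau_n$ is short with respect to $H$. The key bookkeeping point, which your proposal glosses over, is controlling the conjugator: since $H$ is non trivial and $\Gamma$ is torsion-free hyperbolic, $Z(H)$ is trivial or infinite cyclic containing $H$ with finite index, so (after replacing $\theta$ by a conjugate and extracting) one may take $\gamma_n\in H\subseteq\theta^n(\Gamma)$, whence $\hat\theta_n(\Gamma)=\theta^n(\Gamma)$. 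The strict descent of the images then shows the $\hat\theta_n$ are pairwise \emph{distinct} short injective morphisms restricting to $\id_H$, and Proposition \ref{ShorteningQuotientsAreProper} (a stable sequence of distinct short morphisms restricting to $f$ on $H$ has non trivial stable kernel) gives the contradiction with injectivity. In other words, the ``no infinite descent'' input is the finiteness of short injective morphisms extending $\id_H$ — the same shortening-argument content that underlies Theorem \ref{FiniteInjections} — applied to the shortened powers themselves, not a complexity monovariant on the chain of images; and making that applicable requires the $Z(H)$ argument to ensure the shortened maps really are pairwise distinct.
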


The result also holds when $H$ is trivial, provided $\Gamma$ is not $2$-ended (see Theorem 4.4 of \cite{SelaHypII}) but the proof is much easier in the relative case.

\begin{rmk} If $H$ is finitely generated, Theorem \ref{RelativeCoHopf} says exactly that a finite generating set for $H$ is a test tuple for monomorphisms (see Definition 2.3 of \cite{NiesF2}). To show homogeneity of $\F_2$, Nies uses the fact that for any tuple $\bar{g}$ in $\F_2$, either $\F_2$ is freely indecomposable with respect to the subgroup $\langle \bar{g} \rangle$ (so $\bar{g}$ is a test tuple), or the elements of $\bar{g}$ are all powers of a same primitive element. The second crucial factor in Nies' proof is that the injectivity of a morphism $\F_2 \to \F_2$ can be expressed in first-order (it is enough to say that the images of the generators do not commute). Both these properties do not hold in higher rank.
\end{rmk}

Finally, non injective morphisms are dealt with by 
\begin{thm} \label{FactorSet} Let $G$ and $\Gamma$ be torsion-free hyperbolic groups. Let $H$ be a subgroup of $G$ with respect to which $G$ is freely indecomposable, and let $f:H \to \Gamma$ be a homomorphism.

Then there exists a finite set $\{\eta_i: G \to M_i\}_{1 \leq i \leq s}$ of proper quotients of $G$ such that for any non injective homomorphism $\theta: G \to \Gamma$ which restricts to $f$ on $H$, there exists an element $\sigma$ of $\Mod_{H}(G)$ such that $\theta \circ \sigma$ factors through one of the maps $\eta_i$. 
\end{thm}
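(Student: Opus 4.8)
The plan is to run the standard Sela-style shortening/accessibility machinery on the space of non-injective homomorphisms $\theta \colon G \to \Gamma$ restricting to $f$ on $H$. First I would note that such a $\theta$ factors through the proper quotient $G / \Ker(\theta)$, so the real content is to bound, uniformly, the family of proper quotients that can arise, after precomposing by a modular automorphism fixing $H$. Assume for contradiction that no finite set $\{\eta_i \colon G \to M_i\}$ works; then there is an infinite sequence $(\theta_n)$ of non-injective homomorphisms $G \to \Gamma$, each restricting to $f$ on $H$, such that for every $n$ and every $\sigma \in \Mod_H(G)$ the map $\theta_n \circ \sigma$ does not factor through any quotient that has already appeared — in particular the kernels $\Ker(\theta_n)$, up to the action of $\Mod_H(G)$, are pairwise incomparable and infinitely many genuinely distinct quotients occur. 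To extract geometry from this, I would replace each $\theta_n$ by a shortest representative in its $\Mod_H(G)$-orbit (with respect to a word metric on $\Gamma$ coming from a fixed finite generating set, measured after conjugating to minimize displacement), which is legitimate because $\Mod_H(G)$ acts on the set of homomorphisms restricting to $f$ on $H$.

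Next I would pass to an asymptotic limit. Consider the actions of $G$ on the Cayley graph of $\Gamma$ (a $\delta$-hyperbolic space) via $\theta_n$, rescale the metric by the maximal displacement of a fixed generating set of $G$, and take a Gromov–Hausdorff / equivariant limit: since the $\theta_n$ are eventually non-injective and pairwise essentially distinct, the homomorphisms do not stabilize up to conjugacy, so the rescaled actions are genuinely unbounded and converge to a nontrivial isometric action of $G$ on a real tree $T$ with the limit action having the Sela property (tripod stabilizers trivial, arc stabilizers abelian, etc., as in the torsion-free hyperbolic setting). Crucially, $H$ fixes a point of $T$: since the $\theta_n$ all restrict to the fixed homomorphism $f$ on $H$, the displacement of the generators of $H$ stays bounded while the rescaling factor tends to infinity, so $H$ is elliptic in the limit tree. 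Applying the Rips machine (in the form used in \cite{Sel1}, \cite{Sel7}) to this action of $G$ relative to $H$ yields a graph-of-groups decomposition of $G$ relative to $H$ with cyclic edge groups and surface/abelian/rigid vertices, i.e. exactly the kind of decomposition whose Dehn twists generate $\Mod_H(G)$ — this uses that $G$ is freely indecomposable relative to $H$ so the limit tree is not a line and has no splitting as a free product rel $H$.

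Then I would apply the shortening argument: any homomorphism $\theta_n$ far enough along the sequence can be shortened by a modular automorphism $\sigma_n \in \Mod_H(G)$ built from the splitting just produced, contradicting the choice of $\theta_n$ as a shortest representative in its orbit — unless $\theta_n$ actually factors through the quotient of $G$ obtained by collapsing the noninjective part detected by the limit tree. Running this over all branches of a finite analysis (Dunwoody accessibility guarantees the process of taking limits, shortening, and passing to proper quotients terminates, since each step strictly decreases complexity) produces the desired finite list $\{\eta_i \colon G \to M_i\}$: each $M_i$ is a proper quotient of $G$ because the limiting action is nontrivial, so the kernel of the corresponding factor map is nontrivial. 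The main obstacle is the relative shortening argument itself: one must verify that the modular automorphisms produced from the limit tree genuinely lie in $\Mod_H(G)$ (the Dehn twists must fix $H$, which is why we needed $H$ elliptic in the limit tree and in the resulting splitting), and — as the paper flags in the remark preceding the theorem — one must control the axial (abelian) components appearing in the Rips decomposition so that the shortening really decreases length; this is the delicate point, and it is why the statement is phrased for $G$ torsion-free hyperbolic rather than merely finitely generated. Since this is a variation on \cite[Theorem]{Sel1} and \cite{Sel7}, I would cite those for the technical core and only indicate the modifications needed to keep track of $H$.
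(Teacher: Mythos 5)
Your general toolbox (relative shortening, limit trees, Rips machine, keeping $H$ elliptic, citing Sela for the technical core) is the right one and is the same as the paper's, but the step that actually produces the finite list has a genuine gap. First, it is not true that the shortening contradiction holds ``unless $\theta_n$ actually factors through the quotient of $G$ obtained by collapsing the noninjective part detected by the limit tree'': convergence of a sequence $(\theta_n)$ to a limit action only guarantees that each element of the stable kernel $\underleftarrow{\Ker}\;\theta_n$ is killed by $\theta_n$ for $n$ large enough; no individual $\theta_n$ need factor through $G/\underleftarrow{\Ker}\;\theta_n$. What the relative shortening argument actually yields (the paper's Proposition \ref{ShorteningQuotientsAreProper}) is only that a stable sequence of pairwise distinct short morphisms restricting to $f$ on $H$ has nontrivial stable kernel. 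Second, the termination you invoke is not Dunwoody accessibility, which concerns refinements of splittings of a finitely presented group, not chains of proper epimorphisms; the correct finiteness input is the descending chain condition for $\Gamma$-limit groups, i.e.\ the finiteness of maximal shortening quotients, which is Sela's Lemmas 5.4--5.5 and appears in the paper as Proposition \ref{FiniteMaxQuotient}, stated for families of stable sequences closed under extraction of subsequences and of diagonal subsequences.

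The paper closes exactly this gap as follows, and your sketch has no substitute for it: let ${\cal S}$ be the set of stable sequences of non-injective \emph{short} morphisms $G \to \Gamma$ restricting to $f$ on $H$ (shortness being up to precomposition by $\Mod_H(G)$ and conjugation by $Z(f(H))$, which preserves the restriction to $H$ --- your ``conjugating to minimize displacement'' must be restricted to the centralizer for this reason). Proposition \ref{FiniteMaxQuotient} gives finitely many quotients by stable kernels of sequences in ${\cal S}$; these are proper because a trivial stable kernel would allow the extraction of a subsequence of pairwise distinct non-injective short maps, contradicting Proposition \ref{ShorteningQuotientsAreProper}. Finally, an arbitrary non-injective $\theta$ restricting to $f$ on $H$ is handled by the constant-sequence trick: choose $\tau \in \Mod_H(G)$ and $\gamma \in Z(f(H))$ so that $\hat{\theta} = \Conj(\gamma) \circ \theta \circ \tau$ is short; the constant sequence $(\hat{\theta})$ lies in ${\cal S}$, hence $\hat{\theta}$, and therefore $\theta \circ \tau$, factors through one of the finitely many proper quotients. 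Without this compactness argument over the set of limit quotients, your ``run over all branches of a finite analysis'' does not produce the asserted finite factor set. You also skipped the small but necessary reduction that for $G$ cyclic every non-injective morphism to the torsion-free group $\Gamma$ is trivial, which is needed before the shortening machinery (stated for non-cyclic $G$) can be applied.
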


Before proving these three results, we will deduce from Theorem \ref{FiniteInjections} the following corollary:
\begin{cor} \label{SetOfInjStabilizes} Let $G$ and $\Gamma$ be torsion-free hyperbolic groups. Let $H$ be a subgroup of $G$ with respect to which $G$ is freely indecomposable, and let $f$ be an embedding of $H$ into $\Gamma$. There is a finitely generated subgroup $H_0$ of $H$ such that any embedding $\theta: G \to \Gamma$ which restricts to $f$ on $H_0$ restricts to $f$ on $H$.
\end{cor}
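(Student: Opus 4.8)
The plan is to apply Theorem \ref{FiniteInjections} twice and combine it with a compactness/finiteness argument on an exhausting chain of finitely generated subgroups of $H$, much in the spirit of the proof of Proposition \ref{ModGroupStabilizes}. First I would fix an exhausting chain $H_1 \subseteq H_2 \subseteq \cdots$ of finitely generated subgroups of $H$ with $\bigcup_n H_n = H$; by Lemma 4.23 of \cite{PerinElementary} we may assume $G$ is freely indecomposable with respect to each $H_n$. For each $n$, let $f_n = f|_{H_n}$ and apply Theorem \ref{FiniteInjections} to the embedding $f_n : H_n \to \Gamma$: this yields a finite set $\{i^{(n)}_j : G \to \Gamma\}_{1 \le j \le s_n}$ of embeddings such that every embedding $\theta : G \to \Gamma$ restricting to $f_n$ on $H_n$ has the form $\theta = \Conj(\gamma) \circ i^{(n)}_j \circ \sigma$ for some $\sigma \in \Mod_{H_n}(G)$, some $\gamma \in Z(f_n(H_n))$, and some $j$.

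The key observation is that, since $H$ itself embeds in $\Gamma$ via $f$, the group $\Gamma$ is not $2$-ended (it contains a non-cyclic subgroup, or at least a subgroup with respect to which $G$ is freely indecomposable and non-trivial — one can arrange $H$ non-trivial without loss of generality, the trivial case being vacuous or reducible), so the relevant centralizers $Z(f_n(H_n))$ are controlled. In fact, by Proposition \ref{ModGroupStabilizes} applied to $G$ relative to $H$, there is a finitely generated $H_{n_0}$ with $\Mod_{H_{n_0}}(G) = \Mod_H(G)$, and $H$ elliptic in the relative JSJ. I would then argue that, replacing $H_0$ by $H_{n_0}$ enlarged if necessary, any embedding $\theta : G \to \Gamma$ restricting to $f$ on $H_0$ is of the form $\Conj(\gamma) \circ i_j \circ \sigma$ with $\sigma \in \Mod_{H_0}(G) = \Mod_H(G)$ — and since $\sigma$ fixes $H$ pointwise, $i_j$ fixes $f(H_0)$ pointwise up to conjugacy, and $\Conj(\gamma)$ absorbs into the picture, the map $\theta$ must restrict to $f$ on all of $H$. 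The point is that for each fixed $j$, the embedding $i_j$ either does or does not restrict to $f$ on $H$, and by enlarging $H_0$ past a finite stage we can detect this: if $i_j$ does not restrict to $f$ on $H$, then it already disagrees with $f$ on some $H_m$, so it cannot be one of the $i^{(m)}_j$'s for that $m$.

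More carefully, the argument I would run is: suppose for contradiction that for every finitely generated $H_0 \le H$ there is an embedding $\theta_{H_0}$ restricting to $f$ on $H_0$ but not on $H$. Taking $H_0 = H_n$ and using Theorem \ref{FiniteInjections}, write $\theta_{H_n} = \Conj(\gamma_n) \circ i^{(1)}_{j_n} \circ \sigma_n$ relative to the \emph{fixed} finite family obtained from $H_1$ (legitimate since $\theta_{H_n}$ restricts to $f$ on $H_1 \subseteq H_n$). By pigeonhole some index $j$ occurs infinitely often, giving infinitely many $\theta_{H_n} = \Conj(\gamma_n) \circ i^{(1)}_j \circ \sigma_n$ with $\sigma_n \in \Mod_{H_1}(G)$. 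Now one checks that $\Conj(\gamma_n) \circ i^{(1)}_j$ and $i^{(1)}_j$ agree on $\sigma_n(H_n)$-conjugates up to bounded adjustment; the essential point, which is where the real work lies, is to show that this forces $i^{(1)}_j$ to restrict to $f$ on all of $H$ — using that $i^{(1)}_j$ restricts to $f$ on $H_1$, that $\sigma_n$ is modular hence fixes $H_1$, and that $H$ is generated by the increasing union — and then $\theta_{H_n} = \Conj(\gamma_n) \circ i^{(1)}_j \circ \sigma_n$ would restrict to $f$ on $H$ after all (since $\sigma_n$ fixes $H$ pointwise once $H$ is elliptic, by Proposition \ref{ModGroupStabilizes}), a contradiction.

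\textbf{The main obstacle} I expect is the bookkeeping around the conjugating elements $\gamma \in Z(f(H_n))$ and the modular automorphisms $\sigma$: one must ensure that as $H_0$ grows, the finitely many embeddings $i_j$ produced by Theorem \ref{FiniteInjections} can be taken compatibly (or at least that the ``bad'' ones get eliminated at a finite stage), and that a modular automorphism relative to $H_0$ which one knows fixes $H_0$ pointwise actually fixes all of $H$ pointwise — this is exactly what Proposition \ref{ModGroupStabilizes} is designed to supply, via $\Mod_{H_0}(G) = \Mod_H(G)$ together with ellipticity of $H$ in the relative JSJ. Once those two facts are in place, the finiteness of each factor set from Theorem \ref{FiniteInjections} closes the argument by a straightforward pigeonhole on the chain $H_n$.
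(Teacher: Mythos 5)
Your overall strategy is the one the paper uses: exhaust $H$ by finitely generated subgroups $H_n$, invoke Proposition \ref{ModGroupStabilizes} so that for large $n$ the group $G$ is freely indecomposable relative to $H_n$ and $\Mod_{H_n}(G)=\Mod_H(G)$ (hence the modular automorphisms occurring in Theorem \ref{FiniteInjections} fix all of $H$ pointwise), and then combine the finiteness given by Theorem \ref{FiniteInjections} with a stabilization or pigeonhole argument along the chain. The paper organizes the finiteness step slightly differently (it arranges the finite sets of embeddings to be nested, $I_{H_{n+1}}\subseteq I_{H_n}$, and lets this chain stabilize, rather than arguing by contradiction with a pigeonhole from a fixed stage $H_1$), but that difference is cosmetic.

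The genuine gap is your treatment of the conjugating elements $\gamma\in Z(f(H_n))$. The justification you give --- that $\Gamma$ is not $2$-ended and that one may take $H$ non-trivial --- does not make these centralizers trivial: if $H_n$ (or $H$) is cyclic, then $Z(f(H_n))$ is an infinite cyclic group, and phrases like ``$\Conj(\gamma)$ absorbs into the picture'' or ``agree up to bounded adjustment'' are not arguments; with non-trivial and possibly varying $\gamma_n$, your key step (forcing $i^{(1)}_j$ to restrict to $f$ on all of $H$) does not go through as written. The fix, which the paper makes in its first line and which you are missing, is a reduction: if $H$ is finitely generated the corollary is trivial with $H_0=H$, so one may assume $H$ infinitely generated; since abelian subgroups of a torsion-free hyperbolic group are cyclic, $H$ is then non-abelian, hence $H_n$ is non-abelian for all large $n$, and the centralizer $Z(f(H_n))$ of the non-abelian subgroup $f(H_n)$ of the torsion-free hyperbolic group $\Gamma$ is trivial. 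With that reduction the conjugation factor disappears entirely, every relevant embedding is of the form $i\circ\sigma$ with $\sigma$ fixing $H$ pointwise, and your pigeonhole then closes the argument exactly as in the paper.
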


\begin{proof}[Proof of corollary] We may assume $H$ infinitely generated, hence non abelian. We write $H$ as an increasing union of finitely generated subgroups $H_n$. For $n$ large enough, by Lemma \ref{ModGroupStabilizes} we may assume that $G$ is freely indecomposable with respect to $H_n$, and that $\Mod_{H_n}(G) = \Mod_H(G)$.
Since $G$ is freely indecomposable with respect to $H_n$, Theorem \ref{FiniteInjections} gives a finite set $I_{H_n}$ of injective morphisms $G \to \Gamma$ which restrict to the identity on $H_n$. For $n$ large enough $H_n$ is non abelian, so that $Z(f(H_n))$ is trivial. Any injective morphism $\theta: G \to \Gamma$ which restricts to $f$ on $H_n$ is equal to one of the elements of $I_{H_n}$ after precomposition by an element of $\Mod_{H_n}(G)$. In particular, this applies to the injective morphisms contained in $I_{H_{n+1}}$, so we may assume $I_{H_{n+1}} \subseteq I_{H_n}$ since $\Mod_{H_n}(G) = \Mod_{H_{n+1}}(G)$.

This non increasing chain of finite sets stabilizes so we may assume it is constant. Suppose $\theta$ restricts to $f$ on $H_n$. Then it is the composition of an element of $\Mod_{H_n}(G)$ with an element $i$ of $I_{H_n}$. But any element of $\Mod_{H_n}(G)$ restricts to the identity on $H$, and $i$ lies in $I_{H_n}$ for all $n$, so it also restricts to the identity on $H$. This proves the result. 
\end{proof}

We will now give outlines of the proofs of Theorems \ref{RelativeCoHopf}, \ref{FactorSet} and \ref{FiniteInjections}.
Let  $G$ and $\Gamma$ be torsion-free hyperbolic groups. Let $H$ be a subgroup of $G$ with respect to which $G$ is freely indecomposable, and fix a morphism $f:H \to \Gamma$. We fix finite generating sets $S$ and $\Sigma$ for $G$ and $\Gamma$ respectively, and we denote by $|.|_{\Sigma}$ the word length with respect to $\Sigma$. 

\begin{defi} We say that a homomorphism $\theta: G \to \Gamma$ which restricts to $f$ on $H$ is short with respect to $H$ if for any element $\tau$ of $\Mod_H(G)$ and any element $\gamma$ of the centralizer $Z(f(H))$ of $f(H)$ in $\Gamma$, we have:
$$ \max_{g \in S} |\theta(g)|_{\Sigma} \leq \max_{g \in S} |\gamma \theta(\tau(g)) \gamma^{-1}|_{\Sigma}.$$
\end{defi}

\begin{defi} A sequence $(\theta_n)_{n \in \N}$ of group homomorphisms $\theta_n: G_1 \to G_2$ is said to be stable if for any element $g$ of $G_1$, either $g$ lies in $\Ker \; \theta_n$ for all $n$ large enough, or $g$ lies outside of $\Ker\; \theta_n$ for all $n$ large enough. The set of elements for which the first alternative holds is called the stable kernel of the sequence $\theta_n$, and is denoted by $\underleftarrow{\Ker}\; \theta_n$.
\end{defi}

\begin{rmk} If $G_1$ is countable, any sequence of morphisms $G_1 \to G_2$ has a stable subsequence.
\end{rmk}

\begin{prop} \label{ShorteningQuotientsAreProper} Let $G$ and $\Gamma$ be torsion-free hyperbolic groups, assume $G$ is not cyclic. Let $H$ be a subgroup of $G$ with respect to which $G$ is freely indecomposable, and let $f$ be a morphism $H \to \Gamma$.
Suppose $(\theta_n)_{n \in \N}$ is a stable sequence of distinct morphisms $G \to \Gamma$ which restrict to $f$ on $H$, and are short with respect to $H$. Then its stable kernel $\underleftarrow{\Ker}\; \theta_n$ is non trivial.
\end{prop}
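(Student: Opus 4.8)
The plan is to run the Rips--Sela shortening argument relative to $H$. Since $G$ is countable and finitely generated, we may pass to a stable subsequence and assume $(\theta_n)$ is stable; we want to show $\underleftarrow{\Ker}\,\theta_n \neq 1$. Suppose for contradiction that the stable kernel is trivial. Then the sequence $(\theta_n)$ converges (after passing to a subsequence) to a faithful action of $G$ on a real tree $Y$, obtained as a limit of the Cayley graphs of $\Gamma$ rescaled by $\lambda_n = \max_{g \in S}|\theta_n(g)|_\Sigma \to \infty$ (the rescaling constant must go to infinity, since otherwise the $\theta_n$ would take only finitely many values up to conjugacy, contradicting that the $\theta_n$ are distinct and $\Gamma$ is hyperbolic hence equationally Noetherian / has finitely many conjugacy classes of short morphisms). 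The action on $Y$ has the following features: it is non-trivial (as $\lambda_n \to \infty$), arc stabilizers are abelian (in fact the limit action of a torsion-free hyperbolic group has cyclic, hence abelian, arc stabilizers), and $H$ fixes a point of $Y$ — this last point is the key use of the relative hypothesis, and it holds because $f$ is a fixed morphism, so $\theta_n|_H = f$ does not grow, i.e. for $h \in H$ one has $|\theta_n(h)|_\Sigma$ bounded, whence the translation length of $h$ on $Y$ is zero, and since $H$ is finitely generated an elementary argument gives a common fixed point.

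Next I would apply the Rips machine to the action of $G$ on $Y$: since $G$ is finitely generated and acts on $Y$ with abelian arc stabilizers, $Y$ decomposes into a graph of actions whose pieces are of simplicial, axial (abelian), and surface (Seifert/IET) type. From this one extracts a non-trivial splitting of $G$ over a (virtually) abelian — here cyclic — subgroup in which $H$ is elliptic, because $H$ fixes a point of $Y$. This gives rise to a non-trivial element $\tau \in \Mod_H(G)$: either a Dehn twist along an edge of the simplicial part, or a ``surface'' modular automorphism supported on a surface-type vertex, or a ``generalized Dehn twist'' on an axial piece (this last is the ``axial component'' alluded to in the remark preceding the statement, and is the reason $G$ is assumed finitely generated rather than merely so with extra care). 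The shortening argument then shows that for all $n$ large enough, precomposing $\theta_n$ by a suitable modular automorphism built from $\tau$ (and possibly conjugating by an element of $Z(f(H))$) strictly decreases $\max_{g\in S}|\theta_n(g)|_\Sigma$. This contradicts the assumption that every $\theta_n$ is short with respect to $H$. Hence $\underleftarrow{\Ker}\,\theta_n$ is non-trivial.

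The main obstacle is the shortening step itself: one must verify that the limit action on $Y$ is ``shortenable'', i.e. that the modular automorphism produced by the Rips decomposition actually shortens the generating set $S$ for the approximating morphisms $\theta_n$, and that this modular automorphism fixes $H$ (so lies in $\Mod_H(G)$) rather than merely fixing the limit point stabilized by $H$. This requires carefully matching the vertex and edge groups of the graph-of-actions decomposition of $Y$ with actual subgroups of $G$, controlling how the point stabilizers of $Y$ relate to the images $\theta_n(G)$, and handling the three types of pieces (simplicial, surface, axial) separately — the axial case being the most delicate because the relevant automorphisms are not honest Dehn twists and one must check they preserve the relative structure and genuinely shorten. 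All of this is exactly the content of the relative shortening argument of \cite{Sel7} (in the free case \cite{Sel1}), and I would invoke it at the level of the statement, indicating the one new point: that ellipticity of $H$ in $Y$ forces the shortening automorphism to be taken in $\Mod_H(G)$.
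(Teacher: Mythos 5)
Your overall strategy is the same as the paper's (the relative Rips--Sela shortening argument: limit tree, ellipticity of $H$, Rips decomposition, shortening via $\Mod_H(G)$ contradicting shortness), and you correctly flag that the shortening automorphism must lie in $\Mod_H(G)$. But there is a genuine gap at the step where you assert that the limit action on $Y$ is non-trivial ``as $\lambda_n \to \infty$''. The divergence of the rescaling constants only guarantees that the \emph{basepoint} is not globally fixed (its displacement is normalized to $1$); it does not rule out a global fixed point elsewhere in the limit tree, and without ruling that out you get no splitting of $G$ relative to $H$ and the shortening step never starts. In the absolute case this is handled because shortness under conjugation by all of $\Gamma$ makes the basepoint a point of minimal displacement; here you are only allowed to conjugate by $Z(f(H))$, and this is exactly the delicate, non-classical part of the proof. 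The paper's argument is: $H$ fixes the basepoint $x$ (see below); if $y$ were a global fixed point, then $H$ would fix the arc $[x,y]$, hence be abelian and so trivial or cyclic (torsion-freeness and hyperbolicity of $G$); in the trivial case an approximating point $y_n$ of $y$ lies in $Z(f(H)) = \Gamma$ and has displacement smaller than that of $1_n$, and in the cyclic case one replaces $y_n$ by a nearby power $f(h)^{K_n} \in Z(f(H))$ with the same effect --- in both cases contradicting shortness, since shortness says precisely that $1_n$ minimizes the displacement over $Z(f(H))$. Your proposal never uses the $Z(f(H))$-part of the shortness hypothesis at this stage, so the non-triviality of the action is unjustified as written.

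A secondary, fixable slip: to get ellipticity of $H$ you invoke finite generation of $H$, but $H$ is an arbitrary subgroup in this proposition (handling infinitely generated $H$ is the point of Lemma \ref{ModGroupStabilizes} and Corollary \ref{SetOfInjStabilizes}). No such hypothesis is needed: for every $h \in H$ the displacement of $h$ at the basepoint is $|f(h)|_{\Sigma}/\mu_n \to 0$, so every element of $H$ fixes the basepoint itself, and hence so does $H$. Likewise, $\mu_n \to \infty$ follows simply because a morphism is determined by its values on $S$, so boundedness would leave only finitely many possible $\theta_n$; no appeal to conjugacy classes or equational Noetherianity is needed.
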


The proof of this is an instance of Rips and Sela's classical shortening argument developed in \cite{RipsSelaHypI} (see \cite{WiltonThesis} or \cite{ThesisPerin} for a more detailed exposition), thus we only outline it and detail the arguments which are not classical. 

\begin{proof}[Outline of the proof] Let us assume by contradiction that the stable kernel of the sequence $(\theta_n)_{n \in \N}$ is trivial (this implies in particular that $f$ is injective). Denote by $X$ the Cayley graph of $\Gamma$ with respect to $\Sigma$. The group $G$ acts on $X$ via $\theta_n$. The displacement function $\Delta_n: X \to \N$ associated to this action is given by 
$$\Delta_n(x)=\max_{g \in S} d_X( x, \theta_n(g)\cdot x) = \max_{g \in S} |x^{-1}\theta_n(g)x|_{\Sigma}.$$ 

Let $\mu_n$ denote the minimum reached by this function on $Z(f(H))$ (where $Z(f(H))$ is seen as a set of vertices in $X$). Note that the shortness of the maps $\theta_n$ implies that this minimum is reached for $x=1_n$ the vertex of $X$ representing the identity element. We thus have $\mu_n = \max_{g \in S} |\theta_n(g)|_{\Sigma}$. The maps $\theta_n$ are pairwise distinct so the sequence $\mu_n$ tends to infinity.

We denote by $X[\theta_n]$ the space $X$ endowed with the metric $d_n = d_X/\mu_n$, with the action of $G$ via $\theta_n$, and with basepoint the vertex $1_n$ corresponding to the identity element. 

After extraction, the sequence $X[\theta_n]$ converges to a pointed real $G$-tree $(T,x)$, that we can assume minimal. The triviality of the stable kernel of $(\theta_n)_{n \in \N}$ implies in particular that the image $\theta_n(G)$ is eventually non cyclic, and in this case it can be shown that the action of $G$ on $T$ is faithful, that tripod stabilizers are trivial and that arc stabilizers are abelian. These last two properties imply that the action is superstable.

Let us show that it does not have a global fixed point. By choice of the scaling constant, the basepoint $x$ is not a global fixed point. On the other hand, since the maps $\theta_n$ are constant on $H$ and $\mu_n$ tends to infinity, $H$ fixes the basepoint. If there is a global fixed point $y$, then $H$ fixes the arc between $x$ and $y$, so in particular it is abelian. Since it is a subgroup of the torsion-free hyperbolic group $G$, it must in fact be cyclic or trivial. Let $g$ be an element of the generating set $S$ of $G$ such that $d(x, g \cdot x)$ is non zero, and pick $\epsilon$ much smaller than $d(x, g \cdot x)$. Let now $y_n$ approximate $y$ in $X[\theta_n]$ in an $\epsilon$-approximation with respect to $x$, $y$, $S$ and a generator for $H$. Now for $n$ large enough, we have $d_n(y_n, \theta_n(g)y_n)< d_n(1_n, \theta_n(g) 1_n)$, so that $\Delta_n(y_n) < \Delta_n(1_n)$.

If $H$ is trivial, $y_n$ lies in $Z(f(H))$ and this contradicts the fact that $1_n$ minimizes $\Delta$ on $Z(f(H))$. If $H$ is cyclic and generated by $h$, then for $n$ large enough $1_n$ and $y_n$ are very close to the axis of $h$ in $X[\theta_n]$. Thus, there exists an integer $K_n$ such that $z_n=f(h)^{K_n}$ is very close to $y_n$. Now the orbit of $y_n$ by $G$ has small diameter, thus so does the orbit of $z_n$: this implies that $\Delta_n(z_n) < \Delta_n(1_n)$, which contradicts shortness of $\theta_n$ since $z_n=f(h)^{K_n}$ is in $Z(f(H))$. We showed that the action does not admit a global fixed point. 

The action of $G$ on $T$ satisfies the hypotheses necessary to be analyzed by Rips theory (see Theorem 5.1 of \cite{GuirardelRTrees}), this gives us in particular a decomposition $\Lambda$ of $G$ as a graph of groups with surfaces in which the subgroup $H$ is elliptic.
We can now use classical shortening arguments (originally presented in \cite{RipsSelaHypI}, see also Theorem 4.28 of \cite{ThesisPerin}) to show that we can precompose $\theta_n$ by an element $\sigma$ of the modular group $\Mod(\Lambda)$ in such a way that $\theta_n \circ \sigma$ is shorter than $\theta_n$. This is a contradiction.
\end{proof}


\begin{proof}[Proof of Theorem \ref{FiniteInjections}] By Theorem \ref{ShorteningQuotientsAreProper}, any stable sequence of distinct short morphisms $\theta_n: G \to \Gamma$ which coincide with $f$ on $H$ has a non-trivial stable kernel. In particular, this means there are only finitely many short injective morphisms $G \to \Gamma$ restricting to $f$ on $H$. This proves the result.
\end{proof}

\begin{proof}[Proof of Theorem \ref{RelativeCoHopf}] Suppose $\theta$ is an injective yet non surjective morphism $\Gamma \to \Gamma$ which restricts to the identity on $H$. 

For each integer $n$, we consider the $n$-th power of $\theta$: it is an injective morphism $\theta^n: G \to G$. We pick an element $\tau_n$ of $\Mod_H(G)$, and $\gamma_n$ of $Z(H)$ such that the morphism $\hat{\theta}_n = \Conj(\gamma_n) \circ \theta^n \circ \tau_n$ is short with respect to $H$. Note that since $H$ is non trivial and $\Gamma$ is torsion-free hyperbolic, its centralizer is either trivial, or an infinite cyclic subgroup which contains $H$ as a finite index subgroup. Thus, up to replacing $\theta$ by a conjugate, and extracting a subsequence of the $\theta^n$, we may assume that $\gamma_n$ lies in $H$ and thus in $\theta^n(G)$. In particular we get $\hat{\theta}_n(G) = \gamma_n \theta^n(G) \gamma^{-1}_n = \theta^n(G)$.

The injective morphisms $\hat{\theta}_n: G \to G$ are pairwise distinct, since their images strictly embed one into the cother. Up to further extraction, we may assume that the sequence $(\hat{\theta}_n)_{n \in \N}$ is stable. But Proposition \ref{ShorteningQuotientsAreProper} for $G=\Gamma$ and $f$ the identity on $H$ tells us that it must then have non trivial stable kernel, which contradicts the fact that the morphisms $\hat{\theta}_n$ are  all injective. 
\end{proof}

To prove Proposition \ref{FactorSet}, we need
\begin{prop} \label{FiniteMaxQuotient} Let $G$ be a finitely generated group, and let $\Gamma$ be a torsion-free hyperbolic group. Suppose ${\cal S}$ is a set of stable sequences $(\theta_n)_{n \in \N}$ of homomorphisms $G \to \Gamma$ which satisfies:
\begin{itemize}
    \item ${\cal S}$ is closed under extraction of subsequences;
    \item ${\cal S}$ is closed under extraction of diagonal subsequences.
\end{itemize}
Then there exists a finite number of quotient maps $\eta_j: G \to G/K_j$ where $K_j$ is the stable kernel of a sequence which lies in ${\cal S}$, such that for any element $(\theta_n)_{n \in \N}$ of ${\cal S}$, the homomorphisms $\theta_n$ eventually factor through one of the maps $\eta_j$. 
\end{prop}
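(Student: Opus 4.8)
The plan is to argue by contradiction using a König-type tree argument on the set of stable kernels arising from sequences in $\cal S$. Suppose no such finite collection of quotient maps exists. We want to build a stable sequence in $\cal S$ whose stable kernel is strictly larger than the stable kernel of every sequence in $\cal S$, which is absurd (a sequence cannot have kernel strictly containing its own). First I would observe that since $G$ is finitely generated, it is countable, so we may enumerate its elements as $g_1, g_2, \ldots$; for any stable sequence $(\theta_n)$ the stable kernel is then determined by the subset of indices $i$ with $g_i \in \underleftarrow{\Ker}\,\theta_n$. Consider the collection $\cal K$ of all stable kernels of sequences in $\cal S$. The key point is that $\cal K$ has a maximal element with respect to inclusion: indeed, given any chain $K^{(1)} \subseteq K^{(2)} \subseteq \cdots$ of such kernels, realized by sequences $(\theta^{(k)}_n)_n \in \cal S$, closure of $\cal S$ under diagonal extraction lets us produce a sequence $(\theta_n)_n \in \cal S$ whose stable kernel contains $\bigcup_k K^{(k)}$: one chooses $\theta_n = \theta^{(k(n))}_{n}$ where $k(n) \to \infty$ slowly enough that each $g \in \bigcup_k K^{(k)}$ is eventually killed. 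By Zorn's lemma $\cal K$ therefore has maximal elements; let $K_1, \ldots$ be chosen among them — but we must show there are only \emph{finitely} many needed, which is the real content.

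The mechanism for finiteness is the descending chain condition available through the torsion-free hyperbolic target $\Gamma$: quotients of $G$ appearing as $G/K$ for $K$ a stable kernel of maps into $\Gamma$ are, after passing to the limiting real tree as in the shortening argument, constrained so that one cannot have an infinite strictly increasing chain of such kernels — equivalently, an infinite strictly descending chain of such quotients $G \twoheadrightarrow G/K_1 \twoheadrightarrow G/K_2 \twoheadrightarrow \cdots$ eventually stabilizes, since each proper quotient map reduces complexity in the sense of the Gromov topology / limit group hierarchy (this is the Zariski-Noetherianity / equationally Noetherian property of $\Gamma$, valid for torsion-free hyperbolic groups). Thus the set of maximal elements of $\cal K$ is finite; call them $K_1, \ldots, K_s$, realized by sequences in $\cal S$, and set $\eta_j \colon G \to G/K_j$.

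It remains to check that every $(\theta_n)_n \in \cal S$ eventually factors through some $\eta_j$. Its stable kernel $K$ is contained in some maximal kernel $K_j$ (every element of a poset satisfying ACC sits below a maximal one — here using the ACC established in the previous step). For $n$ large, $K_j \subseteq \Ker\,\theta_n$ would give the factorization directly, but a priori we only know $K = \underleftarrow{\Ker}\,\theta_n \subseteq K_j$, not the reverse at each stage. To upgrade this: I would combine $(\theta_n)$ with a realizing sequence $(\psi_n)$ for $K_j$ via diagonal extraction to get a sequence in $\cal S$ with stable kernel containing both $K$ and $K_j$; maximality of $K_j$ forces this kernel to equal $K_j$, and forces $K = K_j$. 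Hence $\underleftarrow{\Ker}\,\theta_n = K_j$, so for each of the finitely many generators of $K_j$ (finitely generated because $G$ is finitely generated and $G/K_j$ is a limit group, hence finitely presented — or one argues directly that $K_j$ is a stable kernel so is generated by finitely many $g$'s up to enlarging, using finite generation of $G$), the generator lies in $\Ker\,\theta_n$ for all large $n$, so $\theta_n$ factors through $\eta_j$ eventually.

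The main obstacle I anticipate is the finiteness claim: proving that only finitely many maximal stable kernels occur. This is where one genuinely needs a Noetherianity input — either equational Noetherianity of the torsion-free hyperbolic group $\Gamma$ (so that descending chains of quotients $G/K_i$ stabilize) or, more in the spirit of the preceding sections, a compactness argument on the space of pointed $G$-actions on real trees obtained as Gromov limits of the $\theta_n/\mu_n$, showing that an infinite antichain or infinite ascending chain of kernels would produce a limiting action contradicting superstability / Rips theory. Everything else — the diagonal-closure manipulations, Zorn's lemma, the eventual-factorization bookkeeping — is routine once that finiteness is in hand.
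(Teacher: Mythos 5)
Your proposal has a genuine gap at exactly the point you flag as the main obstacle, and the surrounding architecture does not survive it. The descending chain condition (equivalently, the ascending chain condition on stable kernels) only guarantees that every kernel in ${\cal K}$ lies below \emph{some} maximal element; it says nothing about the number of maximal elements, since a poset with ACC can have an infinite antichain of maximal elements. So the sentence ``Thus the set of maximal elements of ${\cal K}$ is finite'' does not follow from the Noetherianity input you cite, and the alternative you sketch (a compactness argument on limiting actions ruling out infinite antichains) is precisely the nontrivial content that is never carried out. This is where the proof the paper points to (Sela's Lemmas 5.4 and 5.5 in \cite{Sel1}, and Propositions 6.21--6.22 of \cite{ThesisPerin} for hyperbolic targets) does its real work, via a diagonal/shortening argument combined with the termination of sequences of proper epimorphisms of $\Gamma$-limit groups; invoking equational Noetherianity of torsion-free hyperbolic groups is itself appealing to a deep theorem and still would not, by itself, finish the argument.

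There are also two structural problems beyond the finiteness claim. First, maximal kernels are the wrong selection for the conclusion: to have $\theta_n$ factor through $\eta_j$ eventually you need $K_j \subseteq \Ker\,\theta_n$ for all large $n$, hence essentially $K_j \subseteq \underleftarrow{\Ker}\,\theta_n$ together with finite normal generation of $K_j$; maximality gives only the useless inclusion $\underleftarrow{\Ker}\,\theta_n \subseteq K_j$ (for instance, if ${\cal S}$ were all stable sequences, the maximal stable kernel is $G$ itself and nothing factors through $G \to 1$). Your attempted upgrade fails: combining $(\theta_n)$ with a sequence realizing $K_j$ by diagonal extraction produces (after interleaving subsequences, which preserve stable kernels) a sequence whose stable kernel is contained in the \emph{intersection} $K \cap K_j$, not one containing $K \cup K_j$; and even granting such a sequence, ``the merged kernel equals $K_j$'' would only re-prove $K \subseteq K_j$, not $K = K_j$. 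Second, the finite normal generation of $K_j$ is justified by ``$G/K_j$ is a limit group, hence finitely presented,'' which is a theorem over free targets but is not available for $\Gamma$-limit groups over a general torsion-free hyperbolic $\Gamma$; this point, too, requires the more careful treatment of the cited sources rather than a parenthetical remark.
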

This can be proved by following the argument given in \cite{Sel1} to prove Lemma 5.4 and Lemma 5.5 where this is shown in the case where $\Gamma$ is free (see Proposition 6.21 and 6.22 in \cite{ThesisPerin} for a proof in the general torsion-free hyperbolic case).

\begin{proof}[Proof of Theorem \ref{FactorSet}] Note first that if $G$ is cyclic, any non injective morphism $G \to \Gamma$ is trivial, since $\Gamma$ is torsion-free. We can thus assume that $G$ is not cyclic.

Let ${\cal S}$ be the set of stable sequences $(\theta_n)_{n \in \N}$ of non injective short morphisms $G \to \Gamma$ which restrict to $f$ on $H$. Proposition \ref{FiniteMaxQuotient} gives us a finite set of quotients of the form $\eta: G \to G/K$, where $K$ is the stable kernel of a sequence $(\theta_n)_{n \in \N}$ which lies in ${\cal S}$. 

Assume $K$ is trivial. We extract from the sequence $(\theta_n)_{n \in \N}$ a subsequence $(\theta_{n_k})_{k \in \N}$ of pairwise distinct morphism as follows: let $n_1=1$. Suppose we have picked $\theta_{n_k}$: it is not injective, so we can pick a non trivial element $g_k$ in its kernel. Since the stable kernel of $(\theta_n)_{n \in \N}$ is trivial, there exists $n_{k+1} > n_k$ such that for all $n \geq n_{k+1}$, we have $\theta_{n}(g_k) \neq 1$. Clearly we get in this way a sequence of pairwise distinct morphisms. By Proposition \ref{ShorteningQuotientsAreProper}, $K$ is non trivial: this is a contradiction.

Let now $\theta: G \to \Gamma$ be a non injective morphism which restricts to $f$ on $H$, and let $\tau$ in $\Mod_H(G)$ and $\gamma$ in $Z(f(H))$ be such that $\hat{\theta} = \Conj(\gamma) \circ \theta \circ \tau$ is short with respect to $H$. The constant sequence $(\hat{\theta})_{n \in \N}$ lies in ${\cal S}$, so $\hat{\theta}$ eventually factors through one of the quotients of our finite set. But this means that $\theta \circ \tau$ factors through this quotient.
\end{proof}

\section{A special case} \label{SpecialCaseSec}
To give an insight of how we prove homogeneity of finitely generated free groups, we will now show the following result:

\begin{prop} Let $\F$ be a finitely generated group of rank at least $2$. Suppose $\bar{g}, \bar{ g}'$ are tuples of elements of $\F$ such that $\tp^{\F}(\bar{g}) = \tp^{\F}(\bar{g}')$. Suppose moreover that $\F$ is freely indecomposable with respect to each of the subgroups $H= \langle \bar{g} \rangle $ and $H'= \langle \bar{g}' \rangle$, and that both the JSJ decomposition of $\F$ relative to $H$ and relative to $H'$ are trivial. Then there is an automorphism of $\F$ which sends $\bar{g}$ to $\bar{g}'$.
\end{prop}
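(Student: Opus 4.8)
The plan is to use the type equality to produce, via first-order sentences, an exchange of embeddings between $\F$ and itself that fixes the relevant parameter subgroups, and then to invoke the co-Hopf-type rigidity of Theorem~\ref{RelativeCoHopf} to conclude that this exchange is an automorphism. Concretely, write $\bar g = (g_1, \ldots, g_k)$ as words $w_i$ in a fixed basis $\bar e = (e_1, \ldots, e_n)$ of $\F$, and similarly $\bar g' = w_i'(\bar e')$ for some generating tuple $\bar e'$ of $\F$. First I would observe that since $\F$ is finitely generated, freely indecomposable with respect to $H = \langle \bar g\rangle$, and has trivial relative JSJ decomposition, the subgroup $H$ is \emph{rigid}: every cyclic splitting of $\F$ in which $H$ is elliptic is trivial, so $\Mod_H(\F)$ is trivial (it is generated by Dehn twists of such splittings). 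The same holds for $H' = \langle \bar g'\rangle$. This is the key simplification that makes the modular-group machinery of Theorem~\ref{FiniteInjections} collapse to something usable directly.

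The next step is to apply Theorem~\ref{FactorSet} (with $G = \Gamma = \F$ and $f$ the identity on $H$, after identifying $H$ with itself via $g_i \mapsto g_i$): there is a finite set of proper quotients $\eta_i \colon \F \to M_i$ such that any non-injective $\theta\colon \F \to \F$ restricting to the identity on $H$, after precomposition by an element of $\Mod_H(\F)$ --- which is now trivial --- factors through some $\eta_i$. Encode this as a first-order formula: the statement ``there exists a tuple $\bar x = (x_1, \ldots, x_n)$ generating $\F$, such that $w_i(\bar x) = g_i$ for all $i$, and the map $e_j \mapsto x_j$ is injective'' is expressible as a first-order formula $\psi(\bar g)$ over the parameters $\bar g$, once one knows that non-injectivity of $e_j \mapsto x_j$ is detected by factoring through one of the finitely many fixed proper quotients $\eta_i$ (each such factoring being a finite, hence first-order, condition on $\bar x$), and that ``$\bar x$ generates $\F$'' can be replaced by ``$\bar x$ does not factor through any of the finitely many proper quotients relevant to surjectivity'' --- here one uses that $\F$ is equationally Noetherian / that being a generating tuple is a closed condition expressible by finitely many inequations. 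Since $\bar g$ itself (via the identity embedding) witnesses $\psi(\bar g)$, we have $\F \models \psi(\bar g)$, hence by $\tp^{\F}(\bar g) = \tp^{\F}(\bar g')$ also $\F \models \psi(\bar g')$. This yields a tuple $\bar e''$ generating $\F$ with $w_i(\bar e'') = g_i'$ and $e_j \mapsto e_j''$ injective; the map $\phi\colon \F \to \F$, $e_j \mapsto e_j''$, is then an injective endomorphism whose image is all of $\F$ (being generated by $\bar e''$), i.e. an automorphism, and by construction $\phi(g_i) = g_i'$.

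I expect the main obstacle to be the careful first-order encoding of ``$\bar x$ is a basis'' and of ``$e_j \mapsto x_j$ is injective.'' Injectivity of an endomorphism of $\F$ is \emph{not} in general first-order (this is precisely the obstruction that makes higher rank harder than rank $2$, as the remark after Theorem~\ref{RelativeCoHopf} explains); here it becomes expressible only because Theorem~\ref{FactorSet} reduces non-injectivity to factoring through one of \emph{finitely many fixed} proper quotients $\eta_i \colon \F \to M_i$, and for each fixed finitely presented $M_i$ the condition ``$e_j \mapsto x_j$ factors through $\eta_i$'' is the first-order statement that the images $x_j$ satisfy all the (finitely many, by finite presentation or at least by a Noetherian argument) defining relations of $M_i$. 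Symmetrically, surjectivity --- that $\langle \bar x\rangle = \F$ --- must be arranged; one route is to also run the argument with the roles of $\bar g, \bar g'$ reversed to get $\phi'\colon \F \to \F$ with $\phi'(g_i') = g_i$ and $\phi'$ injective, and then note $\phi' \circ \phi$ is an injective endomorphism of $\F$ fixing $H$ pointwise, hence an isomorphism by Theorem~\ref{RelativeCoHopf}, so $\phi$ is injective with a one-sided inverse up to $H$; a cleaner route, and the one I would take, is to fold the non-surjectivity of $\bar x$ into the same finite family of proper quotients (a non-surjective $\bar x$ factors its ``complement'' through a proper quotient) and absorb it into $\psi$, so that $\psi(\bar g')$ directly delivers a genuine automorphism sending $\bar g$ to $\bar g'$. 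Verifying that all these conditions are uniformly first-order, using that $\F$ is equationally Noetherian, is where the real care is needed; everything else is a direct application of the results of Sections~\ref{ModularGroupAndJSJSec} and~\ref{FactorSetSec}.
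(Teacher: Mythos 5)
Your reduction of the modular group to the trivial group and your final co-Hopf composition step match the paper, but the central first-order encoding has a genuine gap. The finite family of proper quotients produced by Theorem~\ref{FactorSet} is attached to the \emph{prescribed restriction} $f$ on $H$: invoking it with $f=\mathrm{id}_H$ only tells you that a non-injective endomorphism sending $\bar g$ to $\bar g$ factors through one of the $\eta_i$. Consequently your formula $\psi$ is a faithful surrogate for injectivity only at the parameter tuple $\bar g$ (where indeed the identity witnesses it). After transferring to $\bar g'$, what you obtain is a tuple $\bar e''$ with $w_i(\bar e'')=g_i'$ such that the endomorphism $e_j\mapsto e_j''$ avoids the kernels of \emph{those particular} $\eta_i$; but nothing asserts that a non-injective endomorphism sending $\bar g$ to $\bar g'$ must factor through the quotients attached to $f=\mathrm{id}_H$ --- that would require the factor set attached to the map $f\colon \bar g\mapsto \bar g'$, which is a different finite family. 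So the endomorphism you extract from $\psi(\bar g')$ need not be injective, and the proof breaks exactly where injectivity is supposed to fall out of the type equality. The auxiliary idea of expressing ``$\bar x$ generates $\F$'' by not factoring through finitely many quotients is likewise unjustified (generation is not captured this way, and equational Noetherianity does not rescue it), though this part is avoidable.

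The paper avoids both problems by running the transfer in the universal, by-contradiction direction: assume there is \emph{no} injective endomorphism sending $\bar g$ to $\bar g'$, apply Theorem~\ref{FactorSet} to $f\colon \bar g\mapsto\bar g'$ to conclude that \emph{every} endomorphism sending $\bar g$ to $\bar g'$ kills one of finitely many fixed nontrivial elements $v_i$, record this as the universal sentence $\phi(\bar g')$, transfer it to $\bar g$ using equality of types, and contradict it with the identity map. In that direction only the easy implication is needed (factoring through a proper quotient forces killing some $v_i$, and the identity kills nothing), so the dependence of the factor set on $f$ is harmless. Surjectivity is never encoded: one simply produces injective morphisms in both directions and composes them, concluding via Theorem~\ref{RelativeCoHopf} --- which is your fallback route and is the correct one. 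If you recast your argument contrapositively, using the factor set for $f\colon\bar g\mapsto\bar g'$ (and symmetrically for the reverse direction), it becomes the paper's proof; as written, the existential encoding with the wrong factor set does not go through.
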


\begin{proof} The triviality of the JSJ decomposition implies that there are no non-trivial cyclic splittings of $\F$ in which $H$ or $H'$ is elliptic, hence $\Mod_H(\F)$ and $\Mod_{H'}(\F)$ are trivial.  

It is enough to show that there exist injective morphisms $i, j: \F \to \F$ with $i(\bar{g}) = \bar{g}'$ and  $j(\bar{g}')=\bar{g}$. Indeed, then $i \circ j$ is an injective automorphism of $\F$ which fixes $\bar{g}$, so by Theorem \ref{RelativeCoHopf} it is in fact an automorphism. Thus $i$ is also bijective, and we get the result.

Suppose thus by contradiction that (without loss of generality) there is no injective morphism $i: \F \to \F$ with $i(\bar{g}) = \bar{g}'$. Define the morphism $f:H \to \F$ by $f(\bar{g})= \bar{g}'$. We get by Theorem \ref{FactorSet} a finite set of proper quotients $\{ \eta_i: \F \to M_i \}_{1 \leq i \leq r}$, and since all the morphisms $\F \to \F$ sending $\bar{g}$ to $\bar{g}'$ are non injective, \textbf{any} such morphism factors through one of the maps $\eta_i$ (we do not need to precompose by anything since the modular group is trivial). We will now translate this statement as a first order sentence satisfied by $\bar{g}'$ over $\F$, and see that $\bar{g}$ cannot satisfy it: this will contradict $\bar{g}$ and $\bar{g}'$ having the same type.

Fix a basis $\bar{a}= (a_1, \ldots, a_n)$ of $\F$, and write the tuple $\bar{g}$ as a tuple of words $\bar{g}(\bar{a})$ in the basis elements $a_i$. Pick also a non trivial element $v_i$ in the kernel of each map $\eta_i$, and write it as a word $v_i(\bar{a})$. A morphism $\F \to \F$ is just a choice of image $\bar{x}=(x_1, \ldots, x_n)$ for the basis, and the image of the element represented by a word $w(\bar{a})$ is represented by $w(\bar{x})$. Thus the following first-order statement holds on $\F$:

$$\phi(\bar{g}'): \; \forall \bar{x} \; \left\{ [\bar{g}(\bar{x}) = \bar{g}' ] \Rightarrow [\bigvee^{r}_{i=1} \bar{v}_i (\bar{x})= 1 ] \right\}.$$

The statement $\phi(\bar{g})$ says that any morphism $\F \to \F$ which sends $\bar{g}$ to $\bar{g}$ kills one of the $v_i$. But the identity is an obvious counterexample. 
\end{proof}

An interesting exercise is to generalize this proof assuming not that the JSJ decomposition is trivial, but that it has no surface groups (i.e. that all the vertex groups of the JSJ decomposition are rigid). In the general case, however, the modular group does not translate well into first-order, and the difficulty comes precisely from the surface groups. 

\section{Hyperbolic towers and preretractions}   \label{HypTowersAndPreretractionsSec}
\begin{defi} \label{HypFloor} Consider a triple $(G, G', r)$ where $G$ is a group, $G'$ is a subgroup of $G$, and $r$ is a retraction from $G$ onto $G'$. We say that $(G, G', r)$ is a hyperbolic floor if there exists a non trivial decomposition $\Lambda$ of $G$ as a graph of groups with a distinguished subset $V_S$ of vertices which are of surface type such that:
\begin{itemize}
\item $G'$ is the subgroup of $G$ generated by the groups of the vertices of $\Lambda$ which are not in $V_S$, and it is in fact their free product;
\item the surfaces corresponding to vertices in $V_S$ are either punctured tori or of characteristic at most $-2$;
\item every edge $e$ of $\Lambda$ has exactly one endpoint $v_e$ in $V_S$ (bipartism), the edge group $G_e$ is a maximal boundary subgroup of the vertex group $G_{v_e}$, and this induces a bijection between edges of $\Lambda$ adjacent to $v_e$ and conjugacy classes of maximal boundary subgroups of $G_{v_e}$;
\item the retraction $r$ sends groups of vertices of $V_S$ to non abelian images. 
\end{itemize}
\end{defi}
By an abuse of language, we call the vertices of $V_S$ the surface type vertices of $\Lambda$, and other vertices the non surface type vertices (although some of them might have surface type according to Definition \ref{SurfaceTypeVertex}).

\begin{defi} \label{HypTower}
Let $G$ be a group, let $H$ be a subgroup of $G$.
We say that $G$ is a hyperbolic tower based on $H$ if there exists a finite
sequence $G=G^0 \geq G^1 \geq \ldots \geq G^m \geq H$ of subgroups of $G$ with $m \geq 0$ and:
\begin{itemize}
\item for each $k$ in $[0, m-1]$, there exists a  retraction $r_k:G^{k} \rightarrow G^{k+1}$
such that the triple $(G^k, G^{k+1}, r_k)$ is a hyperbolic floor, and $H$ is contained in one of the non surface type vertex group of the corresponding hyperbolic floor decomposition;
\item $G^m$ is not abelian and $G^m = H * F * S_1 * \ldots * S_p$ where $F$ is a (possibly trivial) free group, $p \geq 0$, and each $S_i$
is the fundamental group of a closed surface of Euler characteristic at most $-2$.
\end{itemize}
\end{defi}

\begin{rmk} \label{HypTowersAreHyp} By Bestvina and Feighn's combination theorem (see \cite{BFCombinationTheorem}), hyperbolic towers over torsion-free hyperbolic groups are themselves torsion-free hyperbolic groups. 
\end{rmk}

Hyperbolic towers were introduced by Sela in \cite{Sel1}, and he shows in \cite{Sel6} the following result:
\begin{thm}[\cite{Sel6}] \label{ElementaryEqToFree} A finitely generated group $G$ is elementary equivalent to a non abelian free group if and only if  it has a structure of hyperbolic tower over the trivial subgroup.
\end{thm}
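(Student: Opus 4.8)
Since this is a theorem of Sela whose proof occupies \cite{Sel1}--\cite{Sel6}, I will only indicate the strategy; the two implications are of very unequal difficulty. For the implication ``hyperbolic tower over $1$ $\Rightarrow$ elementarily equivalent to a non abelian free group'' I would induct on the height $m$ of the tower $G = G^0 \geq G^1 \geq \cdots \geq G^m$ (recall $G$ is then torsion-free hyperbolic, Remark \ref{HypTowersAreHyp}). The base case asserts that a non abelian free product $F * S_1 * \cdots * S_p$, with $F$ free and the $S_i$ fundamental groups of closed surfaces of Euler characteristic $\leq -2$, is elementarily equivalent to $\F_2$; this rests on two results of Sela --- that such a closed surface group is elementarily equivalent to $\F_2$, and that the class of finitely generated groups elementarily equivalent to $\F$ is closed under free products --- together with $\F_n \equiv \F_2$ (Theorem \ref{FmElementaryInFn}). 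For the inductive step it suffices to show that if $(G^k, G^{k+1}, r_k)$ is a hyperbolic floor then $G^{k+1}$ is an \emph{elementary} subgroup of $G^k$: the surface-type vertices of the floor decomposition make the relevant $\Hom$-sets flexible enough (again a shortening argument, compare Section \ref{FactorSetSec}) that any first-order formula with parameters in $G^{k+1}$ satisfied in $G^k$ can be pushed down to $G^{k+1}$ by precomposing the retraction $r_k$ with a suitable modular automorphism. Chaining the floors yields an elementary chain $G = G^0 \succeq \cdots \succeq G^m$, whence $G \equiv G^m \equiv \F_2$.

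The converse, ``elementarily equivalent to a non abelian free group $\Rightarrow$ hyperbolic tower over $1$'', is the hard direction. Since $G$ is finitely generated and (having the same universal theory as $\F$) embeds in an ultrapower of $\F$, it is a limit group; in particular it carries a Makanin--Razborov structure and admits a strict resolution ending in a free group. The crux is to upgrade this resolution to a genuine tower, using the hypothesis that $G$ satisfies the same $\forall\exists$-sentences as $\F$: this forces, level by level from the top, that each flexible (surface) vertex of the resolution actually retracts onto the level below it (this is where Sela's theory of formal solutions and the implicit function theorem over the free group enter), that the surfaces occurring are large enough (punctured tori or Euler characteristic $\leq -2$), and that the bottom level is a free product of a free group with closed surface groups. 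Making this precise requires the full apparatus of \cite{Sel1}--\cite{Sel6}: Makanin--Razborov diagrams, the shortening argument, $\omega$-residually free towers, and the analysis of the validity of $\forall\exists$-statements along resolutions.

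I expect the genuine obstacle to be exactly this upgrading step: residual freeness alone yields only a resolution, and extracting honest \emph{retractions} and the surface-flat structure from the $\forall\exists$-theory of $\F$ is the heart of Sela's argument, whereas everything on the sufficiency side is comparatively soft once the shortening argument of Rips and Sela is available.
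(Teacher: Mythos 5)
This theorem is not proved in the paper at all: it is quoted as Sela's result from \cite{Sel6} (the culmination of \cite{Sel1}--\cite{Sel6}), so there is no internal argument to compare your outline against, and, like the paper, you are in effect citing Sela. As a roadmap your sketch is broadly faithful to the structure of his proof: towers over the trivial subgroup are shown to be models of $T_{fg}$, and conversely a finitely generated model of $T_{fg}$ is a limit group whose strict resolutions are upgraded, via the $\forall\exists$-analysis and formal solutions, to a structure with genuine retractions.

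The one substantive misjudgment is your claim that the direction ``tower over the trivial subgroup $\Rightarrow$ elementarily equivalent to a free group'' is comparatively soft, with a first-order formula ``pushed down'' a floor by precomposing the retraction with a suitable modular automorphism. That device handles only one quantifier alternation (essentially the $\forall\exists$-level, as in Section \ref{FactorSetSec}); transferring sentences of arbitrary quantifier depth across a hyperbolic floor requires Sela's formal solutions and his quantifier elimination down to Boolean combinations of $\forall\exists$-formulas, i.e.\ most of the machinery you reserved for the converse. Indeed, in this very paper the statement that a hyperbolic tower over a non abelian subgroup yields an \emph{elementary} embedding is isolated as Theorem \ref{ConverseSela}, attributed to Sela and explicitly noted as having no published proof --- hardly a corollary of the shortening argument alone. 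Similarly, your ``base case'' (closed surface groups of Euler characteristic at most $-2$ are models of $T_{fg}$, and the class of finitely generated models is closed under free products) consists of instances and companions of the theorem being proved, since such free products are themselves height-$0$ towers; so it is a citation rather than a reduction. None of this makes the outline wrong as a description of Sela's strategy, but both implications lean on the full apparatus of \cite{Sel1}--\cite{Sel6}, not only the converse, and your proposal should be read as a pointer to that work rather than a proof.
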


Finitely generated free groups admit a structure of hyperbolic tower over any of their free factors. The following result states that this is in fact the only tower sructure a free group can admit over one of its subgroups.

\begin{prop} \label{NoHypFloorInFreeGroup} There is no retraction $r$ from the free group $\F_k$ to one of its subgroups which makes $(\F_k, r(\F_k), r)$ into a hyperbolic floor. 
\end{prop}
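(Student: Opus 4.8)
The plan is to argue by contradiction: suppose $(\F_k, G', r)$ is a hyperbolic floor, with hyperbolic floor decomposition $\Lambda$ of $\F_k$ having a nonempty set $V_S$ of surface type vertices. I would extract a contradiction from the existence of such a structure by exploiting the fact that a hyperbolic floor gives $\F_k$ a ``large'' relative modular group and a nontrivial surface, whereas a free group is too small to accommodate this together with the retraction. First I would record what the floor gives us: the retraction $r$ restricts to the identity on $G'$, the group $G'$ is the free product of the non--surface--type vertex groups, and for each surface vertex $v$ with surface group $S_v$ the restriction $r|_{S_v}$ has non-abelian image. Since $\F_k$ is a free group, $G'$ is again free, and each vertex group of $\Lambda$ is free (being a subgroup of a free group), so in particular each surface group $S_v$ is free --- but the fundamental group of a hyperbolic surface with boundary is indeed free, so no contradiction yet at this crude level; the contradiction must come from how the surface is glued in.

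The key step is an Euler characteristic / rank count. The floor decomposition $\Lambda$ is bipartite: every edge has exactly one endpoint in $V_S$, the edge group is a maximal boundary subgroup of the incident surface vertex group, and edges at $v$ correspond bijectively to conjugacy classes of maximal boundary subgroups of $S_v$. Collapsing all edges not incident to $V_S$, and using that $\F_k$ is the fundamental group of the graph of groups $\Lambda$, I would compute the rank of $\F_k$ in terms of the ranks of the non--surface vertex groups, the surface groups, the edge (boundary) groups, and the first Betti number of the underlying graph. The crucial input is the retraction: $r$ maps $\F_k$ onto $G'$, it is the identity on $G'$, and on each boundary subgroup $C$ of $S_v$ (which is also an edge group, hence conjugate into $G'$ in an appropriate sense via the adjacent non--surface vertex) $r$ is determined. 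Because a surface with boundary deformation-retracts onto a wedge of circles, and because $r$ must send $S_v$ to a non-abelian subgroup of $G'$ while being compatible with the boundary identifications, one gets that $\F_k = G' * (\text{contribution of the surfaces})$ in a way forcing the surface contribution to be free of positive rank --- yet the closed-up surface relations, encoded in the retraction hypothesis that the boundary curves map in a prescribed way, are incompatible with $G'$ being a retract. More concretely, I would consider the double $D\Sigma_v$ of one surface across its boundary, or equivalently use that in a hyperbolic floor the surface vertex group together with its boundary relations to $G'$ would make $\F_k$ an HNN-like extension where the retraction forces a relation; comparing Euler characteristics shows $\chi < 0$ would have to be ``absorbed'' by $G'$, which is impossible for a free retract.

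The cleanest route, and the one I would actually pursue, is via \emph{preretractions} and Sela's characterization: a hyperbolic floor structure on $\F_k$ over $G'$ would, by the machinery of Section~\ref{HypTowersAndPreretractionsSec} (which the paper is about to develop), produce a nontrivial preretraction, and ultimately exhibit $\F_k$ as a strict hyperbolic tower over $G'$ with $G' \neq \F_k$; but then $\F_k$ and $G'$ would be elementarily equivalent with $G'$ a proper retract, and one uses that a free group cannot properly retract onto a subgroup while carrying a genuine surface in between --- the surface's non-abelian image under $r$ combined with the boundary-subgroup conditions contradicts the Kurosh subgroup theorem applied to the graph-of-groups decomposition of the free group $\F_k$. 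The main obstacle I anticipate is making the Euler characteristic bookkeeping precise: one must carefully account for the maximal boundary subgroups, the possibility of several surface vertices, and the punctured torus / characteristic $\leq -2$ cases, and show that in every case the alternating sum forces a strictly negative contribution that a free group cannot support. I expect the proof to reduce to the single-surface-vertex case by a straightforward collapsing argument, and there the contradiction should be a one-line rank inequality once the retraction is used to kill the boundary subgroups' contribution.
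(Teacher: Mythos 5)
Your proposal does not yet contain a proof: each of the three routes you sketch stops exactly where the real difficulty begins, and the two concrete mechanisms you name for the contradiction are not valid. An Euler characteristic/rank count of the floor decomposition cannot work on its own, because graph-of-groups decompositions of a free group with surface-type vertices satisfying all the bipartism and boundary conditions do exist (a punctured surface group is itself free, and such pieces can be glued to a free group along boundary so that the amalgam is again free); the obstruction is not $\chi$, it is the existence of the retraction, and your sketch never extracts anything quantitative from $r$ beyond ``non-abelian image''. The appeal to the Kurosh subgroup theorem is misplaced: the floor decomposition is an amalgam/HNN structure over infinite cyclic edge groups, not a free product, and Kurosh gives no contradiction there. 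Finally, the route via Section \ref{HypTowersAndPreretractionsSec} and elementary equivalence is essentially circular: that a free group cannot properly retract in this way is exactly what is being proved, and the statement ``elementary subgroups of free groups are free factors'' is Theorem 1.3 of \cite{PerinElementary}, whose proof is where the present proposition is established -- the paper itself simply cites that proof.

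The missing idea is how the retraction interacts with homology and with freeness of $\F_k$. Since every edge group of the floor decomposition is a maximal boundary subgroup of a surface vertex group and lies (up to conjugacy) in $r(\F_k)$, the retraction fixes a representative of each such boundary subgroup pointwise; hence the element of $r(\F_k)$ carrying an edge group is the $r$-image of the boundary of the surface, and is therefore a product of commutators (orientable case) or of squares (non-orientable case) of elements of $r(\F_k)$ -- in particular it vanishes in $H_1(r(\F_k);\Z/2)$. On the other hand, the hypothesis that the ambient group $\F_k$ is free forces, via an unfolding/accessibility argument for cyclic splittings of free groups (in the one-edge case this is Shenitzer's theorem: if $A*_C B$ is free with $C\cong\Z$, then $C$ is a free factor of $A$ or of $B$) or via the mod-$2$ homology count carried out in the proof of Theorem 1.3 of \cite{PerinElementary}, some such edge element to be essentially primitive in the bottom group, i.e.\ nonzero in $H_1(\,\cdot\,;\Z/2)$; note that it cannot be a free factor of the surface side, since there a maximal boundary element is a product of commutators or squares. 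Reconciling these two facts is the actual content of the proof, and handling several surface vertices, several boundary components (where individual boundary elements can be primitive in the surface group), HNN-type edges, and the punctured torus versus $\chi\le -2$ cases is precisely the bookkeeping your sketch defers; as it stands, the proposal identifies no step that a free group fails to satisfy.
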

The proof of this result is contained in the proof of Theorem 1.3 in \cite{PerinElementary}.  

We now want to give a result which enables us to deduce that a group $G$ has a structure of hyperbolic floor over a proper subgroup, assuming the existence of a map $G \to G$ which preserves some of the properties of a cyclic (relative) JSJ decomposition for $G$. We need to define these maps slightly more generally as maps $U \to G$ where $U$ is a subgroup of $G$.

\begin{defi} \label{Preretraction} Let $G$ be a group, let $U$ be a subgroup of $G$, and let $\Lambda$ be a cyclic (relative) JSJ decomposition of $U$. A morphism $U \to G$ is a preretraction with respect to $\Lambda$ if its restriction to each rigid vertex group $U_v$ of $\Lambda$ is a conjugation by some element $g_v$ of $G$, and if flexible vertex groups have non abelian images. 
\end{defi} 

The following result appears as Proposition 5.11 in \cite{PerinElementary}.
\begin{prop} \label{Retraction} \label{RETRACTION} Let $U$ be a torsion-free hyperbolic group. Let $\Lambda$ be a cyclic (relative) JSJ decomposition of $U$.
Assume that there exists a non injective preretraction $U \rightarrow U$ with respect to $\Lambda$. Then there exists a subgroup $U'$ of $U$ and a retraction $r$ from $U$ to $U'$ such that $(U,U',r)$ is a hyperbolic floor. Moreover, given a rigid type vertex group $R_0$ of $\Lambda$, we can choose $U'$ to contain $R_0$.
\end{prop}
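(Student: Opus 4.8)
The plan is to extract from the non-injective preretraction enough group-theoretic data to build an honest hyperbolic floor decomposition of $U$, by pushing the preretraction through Rips theory applied to an appropriate limiting action on a tree. First I would set up a sequence of morphisms: if $p : U \to U$ is a non-injective preretraction with respect to $\Lambda$, then composing $p$ with itself or precomposing by automorphisms (Dehn twists supported on the flexible surface-type vertices of $\Lambda$) produces infinitely many distinct morphisms $U \to U$ that still restrict to conjugations on each rigid vertex group and still send flexible vertex groups to non-abelian subgroups. One then shortens this sequence (with respect to the subgroup generated by the rigid vertex groups, which are the pieces we want to keep fixed up to conjugacy) and passes to a stable limit. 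By Theorem \ref{ShorteningQuotientsAreProper} the stable kernel is non-trivial, which is consistent here since $p$ was non-injective to begin with; the point is rather to control the \emph{structure} of the limiting $U$-tree.

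Next I would analyze the limiting real tree $T$ using Rips theory (Theorem 5.1 of \cite{GuirardelRTrees}), exactly as in the outline of Proposition \ref{ShorteningQuotientsAreProper}: one gets a decomposition of $U$ as a graph of groups with surface pieces, discrete pieces, and axial pieces, in which the rigid vertex groups of $\Lambda$ are elliptic (they act with a fixed point because the preretraction was a conjugation on each of them and the scaling constants blow up). The surface pieces of this decomposition are precisely where the shortening gains length, and one checks that they can be assembled into a bipartite graph-of-groups $\Lambda'$ whose surface-type vertices have the required Euler characteristic bound (punctured tori or characteristic $\leq -2$), and whose edge groups are maximal boundary subgroups of the adjacent surface groups — this is where the careful bookkeeping of which boundary subgroups appear, and the passage to the tree of cylinders (Theorem \ref{CylinderJSJ}) to make edge groups maximal cyclic, enters. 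The requirement that a chosen rigid vertex group $R_0$ be retained in $U'$ is handled by noting that $R_0$ is elliptic in $\Lambda'$ and lies in a non-surface-type vertex, so it survives into the complement of the surface pieces.

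Finally I would build the retraction $r : U \to U'$, where $U'$ is the free product of the non-surface-type vertex groups of $\Lambda'$. On the non-surface vertex groups $r$ is the identity; on each surface-type vertex group $S$ one must define $r$ on the generators so that it agrees with the inclusion on the boundary subgroups (which live in $U'$ via the incident edges) and has non-abelian image — this is exactly the kind of map produced by a retraction of a surface group onto a free group of the appropriate rank extending a prescribed boundary behaviour, and the non-abelian image condition is inherited from the corresponding property of the original preretraction $p$ on flexible vertices. Checking that the four bullet points of Definition \ref{HypFloor} hold then gives the hyperbolic floor structure. The main obstacle I expect is the middle step: verifying that the surface pieces emerging from the Rips decomposition of the limiting tree really satisfy the bipartism and maximal-boundary-subgroup conditions of a hyperbolic floor — the Rips machine a priori produces surfaces with boundary glued in a more flexible way, and one needs the universal ellipticity coming from $\Lambda$ being a JSJ decomposition, together with the tree-of-cylinders normalization, to rule out the bad configurations (e.g. an edge group of infinite index in a boundary subgroup, or a surface vertex adjacent to another surface vertex).
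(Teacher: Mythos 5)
There is a genuine gap, and in fact the route you take is not the one behind this statement. Note first that the paper does not reprove this proposition: it invokes Proposition 5.11 of \cite{PerinElementary}, whose proof is a direct analysis of the given preretraction $p$ relative to the given decomposition $\Lambda$, with no limiting tree and no Rips machine. The key point there is a ``pinching'' dichotomy: a preretraction with respect to $\Lambda$ which kills no element representing a simple closed curve on any surface type vertex group is shown to be injective (this uses the $2$-acylindricity of the tree of cylinders and index/complexity arguments for maps between surface groups, as in Lemmas 3.10--3.12 of \cite{PerinElementary}). Since $p$ is not injective, some surface vertex group is pinched; cutting along a maximal family of pinched curves, one modifies $p$ on the surface vertices to obtain the retraction $r$ onto the subgroup $U'$ generated by the remaining vertex groups, and since every rigid vertex group of $\Lambda$ sits inside $U'$ up to this choice, the ``moreover'' about $R_0$ comes for free.

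Your proposal has two concrete problems. First, the sequence of morphisms you feed into the shortening argument consists of non-injective maps, so by construction the stable kernel of the limiting sequence is non-trivial: the action of $U$ on the limit tree is not faithful, and Rips theory (Theorem 5.1 of \cite{GuirardelRTrees}, which requires faithfulness, or at least only sees the image) decomposes the quotient $U/\underleftarrow{\Ker}\,\theta_n$, not $U$ itself. A graph-of-groups decomposition of a proper quotient cannot directly furnish a hyperbolic floor structure on $U$, so the middle step of your argument does not produce the bipartite decomposition $\Lambda'$ of $U$ you need. (Separately, iterating $p$ need not stay a preretraction: $p(S)$ non-abelian does not imply $p(p(S))$ non-abelian.) Second, your last step is essentially circular: the existence of a map on each surface vertex group agreeing with the boundary inclusions and having non-abelian image is precisely the content of the hyperbolic floor retraction, and there is no general surface-group fact providing it -- in the actual proof it is the given preretraction $p$ (adjusted after pinching) that supplies this map, which is only possible because one works with the original $\Lambda$, to which $p$ is adapted by hypothesis, rather than with a new decomposition emerging from a limit tree, to which $p$ bears no a priori relation.
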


The second proposition will enable us to get, starting with a preretraction $U \to G$, a preretraction from $U$ to a subgroup $G'$ over which $G$ has a structure of hyperbolic tower. It appears as Proposition 5.12 in \cite{PerinElementary}.
\begin{prop} \label{RetractionPlus} \label{RETRACTIONPLUS} Let $G$ be a torsion-free hyperbolic group. Let $U$ be a non cyclic retract of $G$, and let $\Lambda$ be a cyclic (relative) JSJ decomposition of $U$. 
Suppose $G'$ is a subgroup of $G$ containing $U$ such that either $G'$ is a free factor of $G$, or $G'$ is a retract of $G$ by a retraction $r:G \to G'$ which makes $(G, G', r)$ a hyperbolic floor.
If there exists a non-injective preretraction $U \to G$ with respect to $\Lambda$, then there exists a non-injective preretraction $U \to G'$ with respect to $\Lambda$. 
\end{prop}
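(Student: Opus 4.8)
The plan is to proceed by induction on the structure witnessing that $G'$ sits inside $G$, reducing everything to the single-step case. If $G'$ is a free factor of $G$, say $G = G' * L$, then I would take a preretraction $p : U \to G$ with respect to $\Lambda$ and postcompose with the obvious retraction $\rho : G \to G'$ killing $L$; I need to check that $\rho \circ p$ is still a preretraction, i.e. that it restricts to a conjugation on each rigid vertex group and has non-abelian image on each flexible vertex group, and that it remains non-injective. The rigid part is delicate: $p$ restricts to conjugation by some $g_v \in G$ on the rigid vertex group $U_v$, but after applying $\rho$ the element $g_v$ may not lie in $G'$. Here I would use that $U$ is a \emph{retract} of $G$ (via some $r_0 : G \to U$, with $U \subseteq G'$) together with Theorem \ref{RelativeCoHopf}-style rigidity: the composition $r_0 \circ \rho \circ p|_{U_v}$ is conjugation by $r_0(\rho(g_v)) \in U$, and one argues — as in \cite{PerinElementary}, using that rigid vertex groups are elliptic in every cyclic splitting of $U$ and that $U$ is freely indecomposable relative to the appropriate data — that this forces $\rho \circ p$ itself to be a conjugation on $U_v$ by an element of $G'$. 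For the flexible vertices, a surface vertex group has non-abelian image under $p$; since it is itself non-abelian and the only way $\rho$ could collapse it to something abelian is if a substantial piece fell into $L$, one rules this out by the same retract argument or by noting that the image is generated by the images of finitely many elements whose non-commutation is preserved.

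For the hyperbolic floor case, say $r : G \to G'$ makes $(G, G', r)$ a hyperbolic floor with surface-type vertex set $V_S$, the strategy is identical: set $p' = r \circ p : U \to G'$ and verify the preretraction axioms. Again the rigid vertex groups of $\Lambda$ map under $p$ to conjugates of themselves in $G$; composing with $r$ and then with the retraction $r_0 : G \to U$ onto the retract $U \subseteq G'$, rigidity of $U_v$ in the JSJ of $U$ forces $p'|_{U_v}$ to be a conjugation by an element of $G'$. The non-abelian-image condition on flexible (surface) vertices of $\Lambda$ is preserved for the same reason as above. The one genuinely new point in the floor case versus the free-factor case is that $r$ is only a retraction, not a projection with a complementary free factor, so one cannot simply "erase" a complement; instead one leans entirely on the defining properties of a hyperbolic floor (bipartism, maximal boundary edge groups) plus the fact that $r$ is the identity on $G'$ and hence on $U$.

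The main step, and the expected obstacle, is establishing \textbf{non-injectivity} of the composed preretraction $U \to G'$. We know $p : U \to G$ is non-injective, so there is $1 \neq u \in \ker p$. Postcomposing with $\rho$ (or $r$) can only enlarge the kernel, so $p' = \rho \circ p$ (resp. $r \circ p$) is automatically non-injective — this is the easy direction. The real subtlety, which I anticipate absorbs most of the work, is the verification that \emph{after postcomposition the map is still a preretraction at all}, concretely the rigid-vertex conjugation claim: one must rule out that applying $\rho$ or $r$ destroys the "conjugation on rigid vertex groups" property by moving the conjugating element outside $G'$ or, worse, by failing to be a conjugation at all. The key tool is that $U$ is a non-cyclic \emph{retract} of $G$ lying inside $G'$ with $U \subseteq G'$, so there is a retraction $G \to U$; composing $U \xrightarrow{p'} G' \hookrightarrow G \to U$ gives an endomorphism of $U$ that is conjugation on each rigid vertex group, and invoking the description of (relative) JSJ decompositions in Section \ref{ModularGroupAndJSJSec} together with the co-Hopf property Theorem \ref{RelativeCoHopf} pins down $p'$ on the rigid part. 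Once that is in place, gluing the $g_v$'s together across edges using the edge-group conjugacy relations and the $2$-acylindricity from Theorem \ref{CylinderJSJ} finishes the argument, exactly as in the proof of Proposition 5.12 of \cite{PerinElementary}.
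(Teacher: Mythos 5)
The crux of this proposition is exactly the point you dismiss in one line, and the parts you present as delicate are the trivial ones. Write $p\colon U\to G$ for the non-injective preretraction and $\rho\colon G\to G'$ for the map you postcompose with (the free-factor projection or the floor retraction $r$). Since $\rho$ restricts to the identity on $G'$ and $U_v\leq U\leq G'$, for $x\in U_v$ one has $\rho(p(x))=\rho(g_v x g_v^{-1})=\rho(g_v)\,x\,\rho(g_v)^{-1}$, so $\rho\circ p$ is conjugation by $\rho(g_v)\in G'$ on every rigid vertex group: no co-Hopf property (Theorem \ref{RelativeCoHopf}), no retraction of $G$ onto $U$, no acylindricity and no ``gluing of the $g_v$'s'' is needed, and non-injectivity is indeed automatic since $\Ker(\rho\circ p)\supseteq\Ker p\neq 1$. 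What genuinely requires proof is the remaining condition, that each flexible vertex group $S$ of $\Lambda$ still has non-abelian image under $\rho\circ p$, and your justification for it is false. A retraction is far from injective on $G$ and can send a non-abelian subgroup onto an abelian one without any ``substantial piece falling into $L$'': for instance if $G=G'*\langle t\rangle$ and $\rho$ kills $t$, the subgroup $\langle a, tat^{-1}\rangle$ (for suitable non-trivial $a\in G'$) is free of rank $2$ but has cyclic image $\langle a\rangle$. Nothing in your argument excludes $p(S)$ being of this kind, and in the hyperbolic floor case $r$ is a non-injective map on the surface pieces of the floor, so commutators of elements of $p(S)$ can perfectly well die under $r$; ``non-commutation is preserved'' is precisely what fails.

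This bad case is the real content of the statement, which is why the present paper does not reprove it but quotes Proposition 5.12 of \cite{PerinElementary}. The argument there genuinely has to deal with a surface vertex group whose image becomes abelian after retracting: it exploits that $p$ is a preretraction, so the boundary subgroups of $S$ are sent to conjugates of subgroups of $U\leq G'$ and hence act elliptically on the Bass--Serre tree of the free-product or floor decomposition of $G$, and then analyses the position of $p(S)$ relative to that decomposition (Morgan--Shalen type arguments, pinching simple closed curves where necessary) to produce a non-injective preretraction into $G'$, rather than deducing it from a formal preservation statement. None of this appears in your proposal, and your closing appeal to ``exactly as in the proof of Proposition 5.12 of \cite{PerinElementary}'' is circular, since that proposition is the very statement to be proved.
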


\section{Homogeneity of the free groups} \label{MainResultSec}
The aim of this section is to show that non abelian free groups are strongly $\aleph_0$-homogeneous. We start by showing that finitely generated free groups are homogeneous. 

\subsection{Finitely generated free groups}

Let $G$ and $G'$ be groups, and let $B$ and $B'$ be subgroups of $G$ and $G'$ respectively which are isomorphic via $f:B \to B'$. If $\phi$ is a first-order formula with parameters in $B$, we denote by $f(\phi)$ the formula with parameters in $B'$ obtained from $\phi$ by replacing each parameter $b$ by $f(b)$. If $p$ is a type over $B$, we denote by $f(p)$ the set of formulas with parameters in $B'$ consisting of the formulas $f(\phi)$ for $\phi \in p$. 

The following result is the key intermediate step to prove homogeneity of finitely generated free groups.
\begin{prop} \label{MainResult} Let $G$ and $G'$ be torsion-free hyperbolic groups. Let $B$ and $B'$ be subgroups of $G$ and $G'$ respectively which are isomorphic via a map $f:B \to B'$.

Let $\bar{ u}=(u_1, \ldots, u_n)$  and $\bar{ v}=(v_1, \ldots, v_n)$ be tuples of elements of $G$ and $G'$ respectively. Let $H_u$ be the subgroup generated by $\bar{u}$ and $B$, and let $U$ be a finitely presented subgroup of $G$ which contains $H_u$, and is freely indecomposable with respect to it. 

Suppose that $f(\tp^G(\bar{u}/B)) = \tp^{G'}(\bar{v}/B')$. Then either there exists an embedding $U \hookrightarrow G'$ extending $f$ which sends $\bar{ u}$ to $\bar{ v}$, or there exists a non injective preretraction $r: U \to G$ with respect to a JSJ decomposition $\Lambda$ of $U$ in which $H_u$ is elliptic. 
\end{prop}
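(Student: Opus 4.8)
The plan is to set up the proof by contradiction: assuming no preretraction of the second kind exists, we will produce the desired embedding $U \hookrightarrow G'$ extending $f$ and sending $\bar u$ to $\bar v$. The starting point is to observe that, since $\bar u$ and $\bar v$ realize corresponding types via $f$, every first-order statement true of $\bar u$ over $B$ translates to one true of $\bar v$ over $B'$ (and vice versa). The key tool is Theorem \ref{FactorSet} applied to $U$ (which is torsion-free hyperbolic by Remark \ref{HypTowersAreHyp}-type considerations, or simply because finitely presented subgroups of hyperbolic groups need not be hyperbolic — so care is needed; in the actual setup $U$ will have to be taken hyperbolic, which the proof of the main theorem can arrange), freely indecomposable with respect to $H_u$, together with the map $f$ on $B$ and $\bar u \mapsto \bar v$ defining a homomorphism $f_0 : H_u \to G'$. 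Theorem \ref{FactorSet} then supplies a finite family of proper quotients $\{\eta_i : U \to M_i\}$ such that every \emph{non-injective} homomorphism $U \to G'$ restricting to $f_0$ on $H_u$ becomes, after precomposition by an element of $\Mod_{H_u}(U)$, a map factoring through some $\eta_i$.

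The heart of the argument is to encode this factorization statement in first-order logic over the parameters. First I would fix a finite presentation $\langle \bar a \mid \bar R \rangle$ of $U$, write $\bar u$ and the elements of $B$ as words $\bar u(\bar a)$, $\bar b(\bar a)$, and for each $i$ pick a non-trivial $w_i \in \ker \eta_i$, written $w_i(\bar a)$. A homomorphism $U \to G'$ extending $f$ and sending $\bar u$ to $\bar v$ is the same as a tuple $\bar x$ in $G'$ satisfying $\bar R(\bar x) = 1$, $\bar u(\bar x) = \bar v$, and $\bar b(\bar x) = f(\bar b)$. The subtlety is the modular group: to say that $\bar x$, after precomposition by \emph{some} $\sigma \in \Mod_{H_u}(U)$, factors through some $\eta_i$, one uses that $\Mod_{H_u}(U)$ is generated by finitely many Dehn twists along the (finitely many) edges of the JSJ decomposition $\Lambda$ of $U$, and that a Dehn twist depends on a choice of twisting element in a cyclic edge group. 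This lets us quantify over the finitely many "twisting parameters" and express $\bigvee_i w_i(\sigma(\bar x)) = 1$ — but one must check the word $w_i(\sigma(\bar x))$ can be written uniformly as a word in $\bar x$ and the twisting parameters. Putting this together yields a first-order formula $\psi(\bar v / B')$, true in $G'$, which says: every homomorphism $U \to G'$ sending $\bar u \mapsto \bar v$ and extending $f$ is non-injective in the strong sense that, up to a modular automorphism, it kills one of the $w_i$.

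Transferring $\psi$ back via $f^{-1}$, we get that $\psi(\bar u/B)$ holds in $G$: every homomorphism $U \to G$ extending the inclusion of $B$ and sending $\bar u$ to $\bar u$ factors, after a modular automorphism, through some $\eta_i$. But the inclusion $U \hookrightarrow G$ is such a homomorphism, so there is $\sigma \in \Mod_{H_u}(U)$ and an index $i$ with the inclusion $\circ \sigma$ factoring through $\eta_i$. Since $\eta_i$ has non-trivial kernel, the inclusion $\circ \sigma$ is non-injective — but the inclusion is injective and $\sigma$ is an automorphism, a contradiction. Hence there \emph{is} an injective homomorphism $U \to G'$ of the required form — unless the only way to avoid the contradiction is that the factorization through the $\eta_i$ was not forced by Theorem \ref{FactorSet}, i.e. there exists a non-injective preretraction $U \to G$ with respect to $\Lambda$. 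Making this last dichotomy precise — identifying exactly which hypothesis fails, and recognizing the escaping case as a \emph{preretraction} (rigid vertices going to conjugates, flexible vertices having non-abelian image) rather than just "some bad homomorphism" — is where the structural input of Theorem \ref{FactorSet} and Theorem \ref{FiniteInjections} on the shape of modular automorphisms (Lemma \ref{ModAndJSJ}) must be combined carefully.

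\textbf{Main obstacle.} I expect the principal difficulty to be the first-order encoding of "factors through $\eta_i$ after precomposition by a modular automorphism." Quantifying over $\Mod_{H_u}(U)$ is not directly first-order — it is an infinite group — so one must genuinely use the finite generation of $\Mod_{H_u}(U)$ by Dehn twists and the fact that each Dehn twist is parametrized by a single group element (the twisting element in a cyclic edge group), reducing the quantification to finitely many element-variables in $G'$. One must also handle the interplay with the \emph{relative} modular group (twists must fix $B$, hence $H_u$), and ensure the boundedness of the relevant word lengths so that finitely many $\eta_i$ and finitely many $w_i$ suffice uniformly. The clean way to present this is to first establish a lemma that membership in $\Mod_{H_u}(U)$-orbits of homomorphisms, combined with factorization through a fixed finite set of proper quotients, is a first-order condition on the image tuple — and then the rest of the argument is the type-transfer and the $\F \to \F$ identity-map contradiction already rehearsed in Section \ref{SpecialCaseSec}.
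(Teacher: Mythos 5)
There is a genuine gap, and it sits exactly where you locate your ``main obstacle'': the first-order encoding of ``factors through some $\eta_i$ after precomposition by an element of $\Mod_{H_u}(U)$.'' Your plan to quantify over finitely many ``twisting parameters'' cannot work, because an arbitrary element of $\Mod_{H_u}(U)$ is an \emph{unbounded} product of Dehn twists; there is no uniform word in $\bar x$ and finitely many element-variables expressing $w_i(\sigma(\bar x))$ for all $\sigma$, so the quantification over the modular group is not first-order. There is also a structural sanity check showing your scheme cannot be repaired: if such a formula existed, then transferring it to $G$ and applying it to the inclusion $U\hookrightarrow G$ would force $\iota\circ\sigma$ to kill some $w_i$ for some automorphism $\sigma$, an outright contradiction -- so you would have proved that the embedding $U\hookrightarrow G'$ \emph{always} exists, with no second alternative. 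That conclusion is false (the preretraction alternative is realized, e.g., in the surface-group applications of Section \ref{SurfaceCaseSec}), so the dichotomy in the statement cannot be an artifact of sloppiness in your final paragraph; it must come from a genuine loss of information in the encoding. Your closing sentences, which try to recover the preretraction case as ``the way to avoid the contradiction,'' do not identify which step actually fails or why the escaping homomorphism is a preretraction.

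The paper's proof closes exactly this gap with the notion of $\Lambda$-relatedness: a morphism $\theta'$ is $\Lambda$-related to $\theta$ if it agrees with $\theta$ up to conjugation (by an existentially quantified element) on each rigid vertex group of $\Lambda$ and has non-abelian image on flexible vertex groups whose $\theta$-image is non-abelian. This condition \emph{is} expressible by a single formula $\Rel(\bar x,\bar y)$ (the lemma quoted from \cite{PerinElementary}), and by Lemma \ref{ModAndJSJ} every $\theta\circ\sigma$ with $\sigma\in\Mod_{H_u}(U)$ is $\Lambda$-related to $\theta$ -- but not conversely. The transferred sentence in $G$, applied to the inclusion, therefore yields only a non-injective morphism $U\to G$ that is $\Lambda$-related to the inclusion, which by Remark \ref{RelToiIsPreretraction} is precisely a non-injective preretraction: the second alternative, with no contradiction. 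The slack between ``$\Lambda$-related'' and ``of the form $\theta\circ\sigma$'' is exactly where the dichotomy comes from. Secondarily, you also skip the reduction needed to handle the parameter set $B$ (which may be infinitely generated, while the relative JSJ and Theorem \ref{FactorSet} setup want a finitely generated relative subgroup): the paper first replaces $H_u=\langle\bar u,B\rangle$ by a finitely generated $H_0$ with $\Mod_{H_0}(U)=\Mod_{H_u}(U)$ via Lemma \ref{ModGroupStabilizes}, enlarged using Corollary \ref{SetOfInjStabilizes} so that embeddings restricting to $f$ on $H_0$ restrict to $f$ on all of $H_u$, and only then runs the argument (Proposition \ref{MainResultBis}) on the finite tuple generating $H_0$.
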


In fact, we will deduce this from the following special case where $B$ is empty:
\begin{prop} \label{MainResultBis} Let $G$ and $G'$ be torsion-free hyperbolic groups.
Let $\bar{ u}=(u_1, \ldots, u_n)$  and $\bar{ v}=(v_1, \ldots, v_n)$ be tuples of elements of $G$ and $G'$ respectively. Let $U$ be a finitely presented subgroup of $G$ which contains $H_u=\langle \bar{u} \rangle$, and is freely indecomposable with respect to it.

Suppose that $\tp^G(\bar{ u}) = \tp^{G'}(\bar{ v})$. Then either there exists an embedding $U \hookrightarrow G'$ which sends $\bar{ u}$ to $\bar{ v}$, or there exists a non-injective preretraction $r: U \to G$ with respect to the JSJ decomposition $\Lambda$ of $U$ relative to $H_u$. 
\end{prop}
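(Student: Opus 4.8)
The plan is to apply the factorization results from Section \ref{FactorSetSec} to the family of morphisms $U \to G'$ sending $\bar u$ to $\bar v$, and then translate a factorization statement into a first-order sentence which $\bar v$ satisfies in $G'$ but $\bar u$ cannot satisfy in $G$ — contradicting $\tp^G(\bar u) = \tp^{G'}(\bar v)$ unless an embedding exists or a preretraction exists. More precisely, suppose there is no embedding $U \hookrightarrow G'$ sending $\bar u$ to $\bar v$. First I would set $\Gamma = G'$, let $f_0 : H_u \to G'$ be any morphism with $f_0(\bar u) = \bar v$ (so $f_0$ need not be injective; but note $U$ is freely indecomposable with respect to $H_u$, which is what Theorems \ref{FiniteInjections} and \ref{FactorSet} require), and apply Theorem \ref{FactorSet} to get a finite set of proper quotients $\{\eta_i : U \to M_i\}$ such that any \emph{non-injective} morphism $\theta : U \to G'$ restricting to $f_0$ on $H_u$, after precomposition by some $\sigma \in \Mod_{H_u}(U)$, factors through one of the $\eta_i$. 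Combined with Theorem \ref{FiniteInjections} applied to the embedding case (which gives finitely many embeddings $i_j : U \to G'$ extending $f_0$ up to $\Mod_{H_u}(U)$ and conjugation in $G'$), one obtains a finite ``factor set'': every morphism $U \to G'$ sending $\bar u$ to $\bar v$ is, up to precomposition by a modular automorphism and postcomposition by a conjugation, either one of finitely many embeddings $i_j$ or factors through one of finitely many proper quotients $\eta_i$.

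The crux is then to express this dichotomy in first-order logic over the empty set (equivalently, using the parameters $\bar u$ resp. $\bar v$ only, since $B$ is empty here). I would fix a finite presentation $\langle \bar a \mid \bar R(\bar a) \rangle$ of $U$, write $\bar u = \bar u(\bar a)$, and for each proper quotient $\eta_i$ pick a nontrivial element $w_i(\bar a) \in \ker \eta_i$. A morphism $U \to G'$ is a tuple $\bar x$ in $G'$ satisfying $\bar R(\bar x) = 1$, and ``factoring through $\eta_i$'' is implied by ``$w_i(\bar x) = 1$''. The modular automorphisms $\sigma \in \Mod_{H_u}(U)$ are the genuine difficulty: they form an infinite group, so precomposition by $\sigma$ is not literally first-order expressible. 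The standard device — this is where the surface-type vertex groups of the JSJ decomposition $\Lambda$ of $U$ relative to $H_u$ enter — is to replace ``$\theta$ factors through $\eta_i$ after a modular precomposition'' by a first-order statement asserting the existence of a \emph{preretraction-like} configuration: images of the rigid vertex groups of $\Lambda$ are conjugates of themselves, images of the surface vertex groups are ``as large as possible'', and modular automorphisms get absorbed into existential quantifiers over the conjugating elements $g_v$ and over lifts of the surface group generators. One writes down the sentence $\phi(\bar v)$ which says: for all $\bar x$, if $\bar x$ defines a morphism $U \to G'$ with $\bar x \mapsto \bar v$ on $\bar u$, then either $\bar x$ encodes (up to conjugacy and modular pre-composition, coded by auxiliary quantifiers) one of the $i_j$, or some $w_i(\bar x) = 1$. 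The finite factor set guarantees $G' \models \phi(\bar v)$.

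Now since $\tp^G(\bar u) = \tp^{G'}(\bar v)$, we get $G \models \phi(\bar u)$. Apply $\phi(\bar u)$ to the inclusion morphism $U \hookrightarrow G$, i.e. to $\bar x = \bar a$ (the chosen generators, viewed inside $G$): this morphism is injective and sends $\bar u$ to $\bar u$, so it kills no $w_i$ (as $w_i \neq 1$ in $U$), and therefore $\phi(\bar u)$ forces $\bar a$ to be — modulo a conjugation by some $g \in G$ and modulo precomposition by some $\sigma \in \Mod_{H_u}(U)$ — one of finitely many non-injective-quotient alternatives or ``essentially one of the $i_j$''. The point of having set things up with the preretraction-style coding is that either $\bar a$ genuinely corresponds (up to conjugacy) to an embedding-type configuration, which after unwinding produces the desired embedding $U \hookrightarrow G'$ sending $\bar u$ to $\bar v$ (contradicting our assumption), or the configuration witnessed inside $G$ is exactly a non-injective preretraction $r : U \to G$ with respect to $\Lambda$: the rigid vertex groups map to conjugates of themselves, the surface vertex groups have non-abelian image (this being part of what $\phi$ asserts, inherited from Theorem \ref{FiniteInjections}/\ref{FactorSet}), and $r$ is non-injective because it kills some $w_i$. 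This is precisely the second alternative in the statement. The main obstacle, and the part requiring the most care, is the faithful first-order encoding of ``up to $\Mod_{H_u}(U)$'': one must use the structure theorem for the modular group relative to a JSJ (Dehn twists along edges and surface-group mapping classes) and the fact that, in the relative JSJ, edge groups are maximal cyclic in adjacent vertex groups (the remark after Theorem \ref{CylinderJSJ}), so that the whole modular group action is ``definable modulo conjugation'' once one existentially quantifies over finitely many elements — a conjugator per rigid vertex and a generating lift per surface vertex.
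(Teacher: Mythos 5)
Your overall strategy is the paper's: assume no embedding exists, invoke the factorization results of Section \ref{FactorSetSec}, encode the outcome in a first-order sentence satisfied by $\bar v$ in $G'$, transfer it to $\bar u$ in $G$ by equality of types, and apply it to the inclusion $U \hookrightarrow G$ to produce a non-injective preretraction. But the execution has two genuine problems, both concentrated in the key step, namely the first-order encoding. First, the whole detour through Theorem \ref{FiniteInjections} and the finitely many embeddings $i_j$ is not only unnecessary but unusable. It is unnecessary because once you have assumed there is no embedding $U \hookrightarrow G'$ sending $\bar u$ to $\bar v$, \emph{every} morphism $U \to G'$ extending $f$ is non-injective, so Theorem \ref{FactorSet} alone covers all of them and the ``embedding branch'' of your dichotomy is empty. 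It is unusable because coding ``$\bar x$ is, up to modular precomposition and conjugation, one of the $i_j$'' would require naming the tuples $i_j(\bar a)$, which are parameters in $G'$: the resulting formula would not belong to $\tp^{G'}(\bar v)$ (the types here are over the empty set), and its transfer to $G$ would be meaningless; moreover injectivity itself is not first-order expressible in this setting (this is exactly the obstruction to extending Nies's argument noted after Theorem \ref{RelativeCoHopf}). Your claim that, when the sentence is interpreted in $G$ at the inclusion, the embedding branch ``after unwinding produces the desired embedding $U \hookrightarrow G'$'' cannot work: a sentence evaluated in $G$ only produces configurations inside $G$.

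Second, the sentence you finally write down has the kill on $\bar x$ itself (``or some $w_i(\bar x)=1$''), which is not what the factorization gives: it gives $\theta(\sigma(w_i))=1$ for some modular $\sigma$, not $\theta(w_i)=1$. The correct device, which you gesture at but do not pin down, is not to try to make $\Mod_{H_u}(U)$ ``definable modulo conjugation'' (no lifts of surface-group generators are needed), but to quantify existentially over a \emph{second} solution $\bar y$ of the presentation of $U$ which is $\Lambda$-related to $\bar x$ --- restriction to each rigid vertex group of $\Lambda$ conjugate to that of $\bar x$, flexible vertex groups with non-abelian image whenever their $\bar x$-image is non-abelian --- and which kills some $g_j$. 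That $\theta\circ\sigma$ is $\Lambda$-related to $\theta$ is Lemma \ref{ModAndJSJ} (via Remark \ref{RelToiIsPreretraction}), and $\Lambda$-relatedness is expressible by a single formula $\Rel(\bar x,\bar y)$ (Lemma 5.17 of \cite{PerinElementary}), so the sentence $\forall \bar x\,\exists \bar y$ is legitimately first-order with only $\bar v$ as parameter. Transferring to $G$ and taking $\bar x$ to be the inclusion, the witness $\bar y$ is a non-injective morphism $\Lambda$-related to an embedding, hence a non-injective preretraction --- note the non-injectivity is carried by $\bar y$, not by $\bar x$, which is precisely why your version of the sentence would not yield the conclusion. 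With these two corrections your argument collapses onto the paper's proof.
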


Let us first see why this implies the more general result stated in Proposition \ref{MainResult}.
\begin{proof}[Proof of Proposition \ref{MainResult}] We also denote by $f$ the extension of $f$ to $H_u$ which sends $\bar{ u}$ to $\bar{ v}$.

By Lemma \ref{ModGroupStabilizes}, there is a finitely generated subgroup $H_0$ of $H_u$ such that $U$ is freely indecomposable with respect to $H_0$, $H_u$ is elliptic in the JSJ decomposition $\Lambda$ of $U$ with respect to $H_0$, and $\Mod_{H_0}(U) = \Mod_{H_u}(U)$. Moreover, by Corollary \ref{SetOfInjStabilizes}, we may also assume $H_0$ large enough so that an embedding $U \hookrightarrow G'$ which restricts to $f$ on $H_0$ also restricts to $f$ on $H_u$.

Now if we let $\bar{h}$ be a finite generating set for $H_0$, it is easy to see that $\tp^G(\bar{h}) = \tp^{G'}(f(\bar{h}))$, so by Proposition \ref{MainResultBis}, either there is an embedding $U \hookrightarrow G'$ restricting to $f$ on $H_0$, or there is a non injective preretraction $U \hookrightarrow G$ with respect to $\Lambda$. This implies the result.
\end{proof}

To prove Proposition \ref{MainResultBis}, we need the following definition.
\begin{defi} Let $U$ be a group, let $\Lambda$ be a (relative) cyclic JSJ decomposition of $U$, and let $h$ be a morphism from $U$ to a group $G$. We say that a morphism $h':U \to G$ is $\Lambda$-related to $h$ if
\begin{itemize}
    \item for each rigid vertex group $R$ of $\Lambda$, there exists an element $u_R$ such that the restriction of $h'$ to $R$ is $\Conj(u_R) \circ h|_R$;
  \item for each flexible vertex group $S$ of $\Lambda$, if $S$ has non abelian image by $h$, it also has non abelian image by $h'$.
\end{itemize}
\end{defi}

\begin{rmk} \label{RelToiIsPreretraction}
Suppose $U$ is a subgroup of a group $G$, and let $\Lambda$ be a cyclic (relative) JSJ decomposition for $U$.
\begin{itemize}
    \item $h$ is $\Lambda$-related to an embedding $U \hookrightarrow G$ iff it is a preretraction with respect to $\Lambda$;
    \item if $\sigma$ is a (relative) modular automorphism, then by Lemma \ref{ModAndJSJ}, the map $h'=h \circ \sigma$ is $\Lambda$-related to $h$.
\end{itemize}
\end{rmk}

The following straightforward lemma, which states that $\Lambda$--relatedness can be expressed in first-order logic, is stated as Lemma 5.17 in \cite{PerinElementary}.  
\begin{lemma} Let $U$ be a group generated by a finite tuple $\bar{u}$, and let $\Lambda$ be a (relative) cyclic JSJ decomposition of $U$. There exists a first order formula $\Rel(\bar{x}, \bar{y})$ such that for any pair of morphisms $h$ and $h'$ from $U$ to $G$, the morphism $h'$ is $\Lambda$-related to $h$ if and only if $G \models \Rel(h(\bar{u}), h'(\bar{u}))$.
\end{lemma}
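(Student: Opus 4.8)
The plan is to write down an explicit first-order formula that encodes the definition of $\Lambda$-relatedness, exploiting the fact that $U$ is finitely presented (being generated by the finite tuple $\bar u$), and the fact that $\Lambda$, being a JSJ decomposition, has only finitely many vertex orbits. First I would fix a finite presentation $\langle \bar u \mid \bar w(\bar u) \rangle$ of $U$, and for each of the finitely many rigid vertex groups $R_1,\dots,R_p$ and flexible vertex groups $S_1,\dots,S_q$ of $\Lambda$, fix finite generating tuples $\bar r_i(\bar u)$ and $\bar s_j(\bar u)$ expressed as words in $\bar u$. Since the graph of groups $\Lambda$ is finite, there are only finitely many such tuples to consider, and the structure of $\Lambda$ relative to the generating set $\bar u$ is fixed combinatorial data.

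The second step is to observe that for a tuple $\bar x$ in $G$ (thought of as the candidate images $h(\bar u)$), the assignment $u_k \mapsto x_k$ extends to a homomorphism $U \to G$ if and only if $G \models \bar w(\bar x) = 1$; this is a single first-order condition because the presentation is finite. Then $\Lambda$-relatedness of $h'$ (with $h'(\bar u) = \bar y$) to $h$ (with $h(\bar u) = \bar x$) amounts to two things: for each rigid vertex group $R_i$, there is an element $g_i$ of $G$ such that $\bar r_i(\bar y) = g_i \, \bar r_i(\bar x) \, g_i^{-1}$ entrywise — an existentially quantified conjunction of equations; and for each flexible vertex group $S_j$, if $\bar s_j(\bar x)$ generates a non abelian subgroup then so does $\bar s_j(\bar y)$ — and "the tuple $\bar s_j(\bar z)$ generates a non abelian subgroup" is expressible by saying that two of its (finitely many, fixed) words in the generators fail to commute, so this is a quantifier-free condition on each of $\bar x$ and $\bar y$, and the implication between them is a first-order combination. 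Taking the conjunction over all $i$ and $j$, together with the homomorphism conditions $\bar w(\bar x) = 1$ and $\bar w(\bar y) = 1$, produces the desired formula $\Rel(\bar x, \bar y)$.

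The main thing to be careful about is the flexible-vertex clause: one must check that non abelianity of the image of a surface-type vertex group is genuinely first-order-expressible uniformly, i.e. that one can pick in advance a finite list of pairs of words in $\bar u$ such that the image of $S_j$ is non abelian precisely when one such pair has non commuting images. This is routine once one recalls that a finitely generated subgroup of any group is non abelian iff two of its generators do not commute (so one can take all pairs of the fixed generators of $S_j$). I would also note that one should not require $g_i$ itself to lie in any particular subgroup — the existential quantifier ranges over all of $G$ — which matches the definition exactly. Finally I would remark that the construction of $\Rel$ depends on the combinatorial data of $\Lambda$ (the vertex groups, their generating tuples as words in $\bar u$, the presentation of $U$), but not on $G$ or on the morphisms $h, h'$, so the same formula works for all pairs $h, h'$ simultaneously, which is what the statement asserts. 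This is essentially bookkeeping; there is no real obstacle, which is why the lemma is described as straightforward.
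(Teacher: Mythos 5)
Your construction is correct and is exactly the routine explicit-formula argument the paper has in mind (the paper omits the proof, citing Lemma 5.17 of \cite{PerinElementary}): conjugation on each of the finitely many rigid vertex groups is checked on a fixed finite generating set, written as words in $\bar{u}$, with an existentially quantified conjugator, and non-abelianity of the image of each flexible vertex group is detected by non-commutation of some pair of its fixed generators, so the implication clause is first-order. The only cosmetic point is that the conjuncts $\bar{w}(\bar{x})=1$ and $\bar{w}(\bar{y})=1$ are superfluous and presuppose finite presentability, which the lemma does not assume: since $h$ and $h'$ are already given to be homomorphisms, the formula need only consist of the rigid and flexible clauses (and the rigid vertex groups are automatically finitely generated here, as $U$ is finitely generated and the edge groups of $\Lambda$ are cyclic).
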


\begin{proof}[Proof of Proposition \ref{MainResultBis}] Denote by $f$ the homomorphism $H_u \to G'$ sending $\bar{u}$ to $\bar{v}$. 

Suppose that there is no embedding $U \hookrightarrow G'$ extending $f$. By Proposition \ref{FactorSet}, there exists a finite set of proper quotients of $U$ through which any morphism $U \to G'$ extending $f$ factors after precomposition by an element $\sigma$ of the modular group $\Mod_{H_u}(U)$.

Pick a non trivial element $g_j$ in the kernel of each of these quotients $\eta_j:U \to U_j$. Pick a finite presentation $\langle \bar{s} \mid \Sigma_U(\bar{s}) \rangle $ for $U$. The elements of the tuple $\bar{u}$ and the elements $g_j$ are represented by words $\bar{u}(\bar{s})$ and $g_j(\bar{s})$ in $\bar{s}$.

For any morphism $\theta: U \to G'$ which extends $f$, there exists a morphism $\theta': U \to G'$ (namely the morphism $\theta \circ \sigma$) which is $\Lambda$-related to it by Remark \ref{RelToiIsPreretraction}, and for which $\theta'(g_j)=1$ for some $j$. We can express this by a first order sentence $\phi(\bar{ v})$ as follows: 

$$ \forall \bar{x} \; \left\{ [\Sigma_U(\bar{x})=1 \wedge (\bar{v} = \bar{u}(\bar{x}))] \Rightarrow \; \exists \bar{y} [\Sigma_U(\bar{y})=1 \wedge \Rel(\bar{x},\bar{y}) \wedge \bigvee_j g_j(\bar{y}) = 1] \right\} . $$

We have $G' \models \phi(\bar{ v})$, so by hypothesis $G \models \phi(\bar{ u})$. The interpretation of $\phi(\bar{ u})$ gives us that for any morphism $\theta: U \to G$ which fixes $\bar{u}$, there is a non injective morphism $\theta': U \to G$ which is $\Lambda$-related to $\theta$. If we take $\theta$ to be the embedding of $U$ in $G$, by Remark \ref{RelToiIsPreretraction} we get a non injective preretraction $U \to G$ with respect to $\Lambda$. 
\end{proof}

In the particular case where $G = G'$, Proposition \ref{MainResult} implies
\begin{thm} \label{MainResult2} Let $G$ be a torsion-free hyperbolic group, and let $B$ be a subgroup of $G$. Let $\bar{ u}=(u_1, \ldots, u_n)$  and $\bar{ v}=(v_1, \ldots, v_n)$ be tuples of elements of $G$.

Suppose that $\tp^G(\bar{ u}/B) = \tp^G(\bar{ v}/B)$. Then either there is an automorphism of $G$ which restricts to the identity on $B$ and sends $\bar{ u}$ to $\bar{ v}$, or there exists a retraction $r$ on $G$ which makes $(G, r(G), r)$ a hyperbolic floor, and such that either $H_u = \langle B, \bar{u} \rangle$ or $H_v = \langle B, \bar{v} \rangle$ lies in $r(G)$. 
\end{thm}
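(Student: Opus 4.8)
The plan is to derive Theorem \ref{MainResult2} from Proposition \ref{MainResult} applied with $G = G'$ and $f = \mathrm{id}_B$, combined with the structural results about preretractions (Proposition \ref{Retraction}, Proposition \ref{RetractionPlus}) and the co-Hopf property (Theorem \ref{RelativeCoHopf}). First I would set up the symmetric situation: let $H_u = \langle B, \bar u\rangle$ and $H_v = \langle B, \bar v\rangle$. The difficulty is that Proposition \ref{MainResult} requires a finitely presented subgroup $U$ containing $H_u$ and freely indecomposable with respect to it; but $H_u$ need not be finitely generated and $G$ need not be freely indecomposable with respect to $H_u$. So the first step is a reduction: write $G$ (up to replacing $G$ by a free factor) in a Grushko decomposition relative to $H_u$, so that $H_u$ sits inside a free factor $G_u$ of $G$ which is freely indecomposable relative to $H_u$; similarly get $G_v$ for $H_v$. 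Using Lemma \ref{ModGroupStabilizes} and Corollary \ref{SetOfInjStabilizes} one replaces $H_u$ by a suitable finitely generated (indeed we can take finitely presented, since $G_u$ is finitely presented and we can take $U = G_u$ itself if $G_u$ is finitely presented — torsion-free hyperbolic groups are finitely presented) subgroup without changing the relevant data.

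Next, apply Proposition \ref{MainResult} in both directions. Taking $U = G_u$, a free factor of $G$ containing $H_u$ and freely indecomposable relative to it, and $B' = B$, $f = \mathrm{id}$: since $\tp^G(\bar u/B) = \tp^G(\bar v / B)$, either (a) there is an embedding $G_u \hookrightarrow G$ extending $\mathrm{id}_B$ and sending $\bar u \mapsto \bar v$, or (b) there is a non-injective preretraction $G_u \to G$ with respect to a relative JSJ decomposition $\Lambda_u$ of $G_u$. In case (b), I invoke Proposition \ref{RetractionPlus} (with $G' = G_u$, which is a free factor of $G$) to pull the preretraction back to a non-injective preretraction $G_u \to G_u$, and then Proposition \ref{Retraction} produces a subgroup $U'$ of $G_u$ and a retraction $r: G_u \to U'$ making $(G_u, U', r)$ a hyperbolic floor, with $U'$ containing a prescribed rigid vertex group. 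One then wants to extend this to a hyperbolic floor structure on $G$: since $G = G_u * F$ for a free group $F$, set $r$ to act as the identity on $F$ and as the floor retraction on $G_u$; one checks this exhibits $(G, r(G), r)$ as a hyperbolic floor, and $H_u$ lies in $r(G)$ provided we arranged $\Lambda_u$ so that $H_u$ is elliptic and in a rigid vertex (here the relative JSJ has $H_u$ elliptic; if $H_u$ lands in a rigid vertex group we can force $U'$ to contain it, and if $H_u$ lands in a flexible (surface) vertex group one argues $H_u$ is conjugate into a boundary subgroup, hence into an adjacent rigid vertex, hence into $r(G)$). This gives the hyperbolic floor alternative for $H_u$.

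If instead case (a) holds both ways — an embedding $i: G_u \hookrightarrow G$ fixing $B$ with $i(\bar u) = \bar v$, and symmetrically an embedding $j: G_v \hookrightarrow G$ fixing $B$ with $j(\bar v) = \bar u$ — then I would compose to get an injective endomorphism of a relevant finitely generated piece that fixes $\langle B, \bar u\rangle$ pointwise; by Theorem \ref{RelativeCoHopf} (applied to the appropriate freely indecomposable-relative-to-$H_u$ factor, with $H = H_u$) this composite is an isomorphism, which forces $i$ (suitably interpreted on the whole of $G = G_u * F$, extended by the identity on the free factors, using that the Grushko factors relative to $H_u$ and relative to $H_v$ have matching free ranks since $\bar u, \bar v$ have the same type) to be an automorphism of $G$ fixing $B$ and sending $\bar u$ to $\bar v$. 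The main obstacle I anticipate is the bookkeeping around the Grushko/relative decompositions: matching up the free parts of the decompositions of $G$ relative to $H_u$ and relative to $H_v$, ensuring the embeddings and the hyperbolic floor extend from the freely indecomposable factor to all of $G$, and handling the case where $H_u$ (or $H_v$) is not finitely generated via the stabilization lemmas — all of which are routine given the cited results but require care to state precisely. The core logical content is entirely carried by Proposition \ref{MainResult}, Proposition \ref{Retraction}, Proposition \ref{RetractionPlus} and Theorem \ref{RelativeCoHopf}.
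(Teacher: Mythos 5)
Your proposal is correct and follows essentially the same route as the paper: the paper also applies Proposition \ref{MainResult} with $U$ taken to be the smallest free factor of $G$ containing $H_u$ (which is freely indecomposable relative to it), handles the two-embeddings case by composing $i$ and $j$, invoking Theorem \ref{RelativeCoHopf} and extending the resulting isomorphism of free factors to an automorphism of $G$ via Grushko, and handles the other case via Propositions \ref{RetractionPlus} and \ref{Retraction}, extending the floor structure from the free factor to $G$. Your extra concerns about $H_u$ being infinitely generated are unnecessary (Proposition \ref{MainResult} already allows arbitrary $B$, via Lemma \ref{ModGroupStabilizes} and Corollary \ref{SetOfInjStabilizes}), but they do no harm.
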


\begin{proof} Denote by $U$ (respectively $V$) the smallest free factor of $G$ containing $H_u$ (respectively $H_v$). Any injective morphism $i:U \to G$ restricting to the identity on $B$ and sending $\bar{ u}$ to $\bar{ v}$ has image in $V$, and conversely any injective morphism $j:V \to G$ restricting to the identity on $B$ and sending $\bar{ v}$ to $\bar{ u}$ has image in $U$. 

Suppose first that such morphisms $i, j$ exist: then the morphism $j \circ i$ is an injective morphism $U \to U$ which fixes $H_u$. By Theorem \ref{RelativeCoHopf}, it is in fact an automorphism. Thus $i$ is an isomorphism $U \to V$. By Grushko's theorem, it can be extended to an isomorphism $G \to G$ since $U$ and $V$ are free factors of $G$. Since this map sends $\bar{ u}$ to $\bar{ v}$ and fixes $B$, the first alternative is satisfied.

But if (without loss of generality) there is no injective morphism $U \to G$ fixing $B$ and sending $\bar{ u}$ to $\bar{ v}$, we can apply Theorem \ref{MainResult} to get a non injective preretraction $U \to G$ with respect to the JSJ decomposition of $U$ relative to $H_u$. Note also that in this case, $U$ is not cyclic: if $U$ was cyclic generated by an element $u_0$, all the elements of $\bar{u}$ and of $B$ would be powers of $u_0$, thus by equality of the types all the elements of $\bar{v}$ and of $B$ would be powers of a common element $v_0$. Then the map $U \to G$ sending $u_0$ to $v_0$ is clearly injective, fixes $B$, and sends $\bar{ u}$ to $\bar{ v}$.

By Proposition \ref{RetractionPlus}, there exists a non injective preretraction $U \to U$. By Proposition \ref{Retraction}, we get a retraction $r: U \to r(U)$ which makes $(U, r(U), r)$ into a hyperbolic floor, and we can choose it so that the vertex group of $\Lambda$ containing $H_u$ lies in $r(U)$. This easily extends to a hyperbolic floor structure of $G$ over a proper subgroup containing $H_u$.
\end{proof}

\begin{cor} \label{FreeGroupHomogeneous} Let $\bar{u}$ and $\bar{v}$ be finite tuples of elements of a finitely generated free group $\F$, and let $B$ be a subgroup of $\F$. If $\tp^{\F}(\bar{ u}/B) = \tp^{\F}(\bar{ v}/B)$, then there exists an automorphism of $\F$ which restricts to the identity on $B$ and sends $\bar{u}$ to $\bar{v}$.
\end{cor}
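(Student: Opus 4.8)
The plan is to combine Theorem \ref{MainResult2} with the structural fact about free groups given by Proposition \ref{NoHypFloorInFreeGroup}. Given tuples $\bar u, \bar v$ with $\tp^{\F}(\bar u/B) = \tp^{\F}(\bar v/B)$, Theorem \ref{MainResult2} (applied with $G = \F$, which is torsion-free hyperbolic) produces one of two outcomes: either there is an automorphism of $\F$ fixing $B$ pointwise and sending $\bar u$ to $\bar v$ — which is exactly what we want — or there is a retraction $r$ of $\F$ onto a subgroup $r(\F)$ making $(\F, r(\F), r)$ a hyperbolic floor, with $\langle B, \bar u\rangle$ or $\langle B, \bar v\rangle$ contained in $r(\F)$. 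So it suffices to rule out the second alternative.

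The second alternative is immediately contradicted by Proposition \ref{NoHypFloorInFreeGroup}, which asserts that no retraction from a free group $\F_k$ onto one of its subgroups can make it into a hyperbolic floor. Hence the first alternative must hold, and we obtain the desired automorphism fixing $B$ pointwise.

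One point to check carefully is that the hypotheses of Theorem \ref{MainResult2} are literally satisfied: $\F$ must be torsion-free hyperbolic (true for any finitely generated free group, with the trivial case $\F$ cyclic or trivial being either vacuous or handled directly, since then all elements of $\bar u$, $\bar v$ and $B$ are powers of a common generator and the map matching them is an automorphism), and $B$ is an arbitrary subgroup, which is allowed. I also need the statement of Proposition \ref{NoHypFloorInFreeGroup} to apply to $\F$ finitely generated of any rank $k$; it does, since the rank-one and rank-zero cases contain no nontrivial surface-type vertices and so cannot support a hyperbolic floor decomposition (which requires a nontrivial graph of groups with a surface-type vertex whose image under the retraction is non-abelian).

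\textbf{Main obstacle.} There is essentially no obstacle here: the corollary is a direct specialization of Theorem \ref{MainResult2} once the free-group case of hyperbolic floors is excluded. The only care needed is the bookkeeping around degenerate cases (trivial or cyclic $\F$) and confirming that the subgroup-of-a-free-group in the second alternative of Theorem \ref{MainResult2} is itself free — which it is, by the Nielsen--Schreier theorem — so that Proposition \ref{NoHypFloorInFreeGroup} indeed applies to $r(\F)$ viewed as the ambient group of the floor. All of this is routine; the real work was done in Theorem \ref{MainResult2} and in \cite{PerinElementary}.
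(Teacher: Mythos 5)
Your proposal is correct and takes essentially the same route as the paper: apply Theorem \ref{MainResult2} with $G=\F$ and rule out the hyperbolic-floor alternative by Proposition \ref{NoHypFloorInFreeGroup}. One small remark: in the floor $(\F, r(\F), r)$ the ambient group is $\F$ itself, so Proposition \ref{NoHypFloorInFreeGroup} applies directly and the Nielsen--Schreier aside about $r(\F)$ being free is not needed.
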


\begin{proof} If there is no automorphism of $\F$ sending $\bar{ u}$ to $\bar{ v}$ and fixing $B$, then Theorem \ref{MainResult2} implies that $\F$ admits a structure of hyperbolic floor over a proper subgroup. But by Proposition \ref{NoHypFloorInFreeGroup}, no such structure exists. 
\end{proof}

\begin{rmk} Similarly, the fundamental groups of the sum of three projective planes and four projective planes respectively are homogeneous, since such groups do not admit proper hyperbolic tower structures.
\end{rmk}

\subsection{Free groups of infinite rank}

We now show that any free group of infinite rank is strongly $\aleph_0$-homogeneous. 
We begin by recalling the proof of the following classical fact of model theory (for a definition of elementary chain, see Section \ref{ModelTheorySec}):
\begin{lemma} \label{ChainOfAleph0Homogeneous}
The union of an elementary chain of $\aleph_0$-homogeneous structures is $\aleph_0$-homoge\-neous.
\end{lemma}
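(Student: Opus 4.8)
The plan is to run a direct verification of the definition of $\aleph_0$-homogeneity for the union structure, reducing everything to finitely many data and then pushing it down into a single structure of the chain. Let $(\mathcal{M}_i)_{i \in I}$ be the elementary chain, with $I$ a linear order, and let $\mathcal{M} = \bigcup_{i \in I} \mathcal{M}_i$; recall from Section \ref{ModelTheorySec} that each $\mathcal{M}_i$ is an elementary substructure of $\mathcal{M}$, so in particular for any finite tuple $\bar{a}$ from $\mathcal{M}_i$ we have $\tp^{\mathcal{M}}(\bar a) = \tp^{\mathcal{M}_i}(\bar a)$. Now suppose $\bar a = (a_1,\ldots,a_n)$ and $\bar b = (b_1,\ldots,b_n)$ are finite tuples from $\mathcal{M}$ with $\tp^{\mathcal{M}}(\bar a) = \tp^{\mathcal{M}}(\bar b)$, and let $c \in \mathcal{M}$. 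I must produce $d \in \mathcal{M}$ with $\tp^{\mathcal{M}}(\bar a c) = \tp^{\mathcal{M}}(\bar b d)$.

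The key observation is a \emph{cofinality / finiteness} step: since $\bar a$, $\bar b$, $c$ together involve only finitely many elements of $\mathcal{M}$, and $I$ is linearly ordered, there is a single index $i \in I$ with $a_1,\ldots,a_n, b_1,\ldots,b_n, c \in \mathcal{M}_i$. By elementarity of $\mathcal{M}_i \preceq \mathcal{M}$, the hypothesis $\tp^{\mathcal{M}}(\bar a) = \tp^{\mathcal{M}}(\bar b)$ transfers to $\tp^{\mathcal{M}_i}(\bar a) = \tp^{\mathcal{M}_i}(\bar b)$. Applying $\aleph_0$-homogeneity of $\mathcal{M}_i$ to the tuples $\bar a$, $\bar b$ and the element $c \in \mathcal{M}_i$, we obtain $d \in \mathcal{M}_i \subseteq \mathcal{M}$ with $\tp^{\mathcal{M}_i}(\bar a c) = \tp^{\mathcal{M}_i}(\bar b d)$. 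Finally, elementarity $\mathcal{M}_i \preceq \mathcal{M}$ again lifts this back up to $\tp^{\mathcal{M}}(\bar a c) = \tp^{\mathcal{M}}(\bar b d)$, which is exactly what is needed.

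There is essentially no serious obstacle here; the only point requiring a (standard) remark is that the classical theorem on elementary chains — that each $\mathcal{M}_i$ is elementary in the union — is precisely what makes types computed in $\mathcal{M}_i$ agree with types computed in $\mathcal{M}$, and this is used twice (once to bring the hypothesis down, once to push the conclusion up). If one wants to be careful, note also that when $I$ has no maximum one still finds a common index $i$ because a finite subset of a chain of structures is contained in some member of the chain; and when $I$ is empty or $\mathcal{M}$ has fewer than the relevant elements the statement is vacuous. So the whole proof is a three-line diagram chase through the two elementarity facts plus the hypothesis on each $\mathcal{M}_i$.
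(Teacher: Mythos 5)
Your proof is correct and follows essentially the same route as the paper: find a single member $\mathcal{M}_i$ of the chain containing all the finitely many relevant elements, transfer the type equality down via elementarity, apply $\aleph_0$-homogeneity of $\mathcal{M}_i$ to get $d$, and lift the conclusion back to the union by elementarity again. No gaps.
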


\begin{proof}
Let $\mathcal{M}=\bigcup_{i\in I}\mathcal{M}_i$ be the union of an elementary chain 
of $\aleph_0$-homogeneous structures. Let $\bar{a},\bar{b}$ be finite tuples of $\mathcal{M}$, 
with  $tp^{\mathcal{M}}(\bar{a})=tp^{\mathcal{M}}(\bar{b})$, 
and let $c$ be an element of $\mathcal{M}$. There is an index $j\in I$ such that $\bar{a},\bar{b},c$ are in $\mathcal{M}_j$. By the elementary chain theorem mentioned in Section \ref{ModelTheorySec}, $\mathcal{M}_j$ is an elementary substructure of $\mathcal{M}$, thus $tp^{\mathcal{M}_j}(\bar{a})=tp^{\mathcal{M}_j}(\bar{b})$. Now $\mathcal{M}_j$ is 
$\aleph_0$-homogeneous so there is $d\in\mathcal{M}_j$, such that 
$tp^{\mathcal{M}_j}(\bar{a}c)=tp^{\mathcal{M}_j}(\bar{b}d)$. This implies $tp^{\mathcal{M}}(\bar{a}c)=tp^{\mathcal{M}}(\bar{b}d)$.     
\end{proof}

\begin{prop}
Let $\kappa \geq \aleph_0$. Then $\F_{\kappa}$ is strongly $\aleph_0$-homogeneous. 
\end{prop}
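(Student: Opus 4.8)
The plan is to reduce the infinite-rank case to the finite-rank case, which has already been settled by Corollary~\ref{FreeGroupHomogeneous}. First I would observe that for any infinite cardinal $\kappa$, the group $\F_\kappa$ can be written as an increasing union of free subgroups $\F_n$ of finite rank $n$, each embedded as a free factor of the next, and so in particular as a free factor of $\F_\kappa$. By Lemma~\ref{FKappaElementaryInFLambda} (or rather Theorem~\ref{FmElementaryInFn} together with the elementary chain theorem), these embeddings are all elementary, so $(\F_n)_{n<\omega}$ is an elementary chain whose union is a free group of countably infinite rank; more generally one arranges a continuous elementary chain of finitely generated (or at least countable) free factors with union $\F_\kappa$. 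Each finite-rank free group $\F_n$ with $n\geq 2$ is $\aleph_0$-homogeneous by Corollary~\ref{FreeGroupHomogeneous}. Hence by Lemma~\ref{ChainOfAleph0Homogeneous} the union $\F_\kappa$ is $\aleph_0$-homogeneous.

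Next I would upgrade $\aleph_0$-homogeneity to \emph{strong} $\aleph_0$-homogeneity. The cleanest route is: given finite tuples $\bar a,\bar b$ from $\F_\kappa$ with $\tp^{\F_\kappa}(\bar a)=\tp^{\F_\kappa}(\bar b)$, I want an automorphism of $\F_\kappa$ taking $\bar a$ to $\bar b$. Pick a finite-rank free factor $\F_m$ of $\F_\kappa$ (with $m\geq 2$) containing both $\bar a$ and $\bar b$; since the inclusion $\F_m\hookrightarrow\F_\kappa$ is elementary, $\tp^{\F_m}(\bar a)=\tp^{\F_m}(\bar b)$, and so by Corollary~\ref{FreeGroupHomogeneous} (in fact applied with $B=\emptyset$) there is an automorphism $\alpha$ of $\F_m$ sending $\bar a$ to $\bar b$. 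Now $\F_\kappa=\F_m*\F'$ for a complementary free factor $\F'$, and $\alpha$ extends to an automorphism of $\F_\kappa$ by acting as the identity on $\F'$. This automorphism takes $\bar a$ to $\bar b$, giving strong $\aleph_0$-homogeneity.

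The main point requiring a little care is the choice of the finite-rank free factor containing $\bar a$ and $\bar b$: the elements of $\bar a$ and $\bar b$ are finite words in some basis of $\F_\kappa$, hence involve only finitely many basis elements, so they lie in a common free factor $\F_m$ of finite rank; enlarging if necessary we may take $m\geq 2$ (this is harmless as $\kappa\geq\aleph_0$, so there is room). One should also note that the extension of $\alpha$ to $\F_\kappa$ using the free product decomposition $\F_\kappa=\F_m*\F'$ is exactly Grushko-type extension of isomorphisms of free factors used already in the proof of Theorem~\ref{MainResult2}. I do not expect any serious obstacle here; the whole statement is essentially a formal consequence of the finite-rank result plus the elementary chain machinery recalled in Section~\ref{ModelTheorySec}. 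If one prefers to avoid even invoking Lemma~\ref{ChainOfAleph0Homogeneous}, the second paragraph alone already proves strong $\aleph_0$-homogeneity directly, since strong $\aleph_0$-homogeneity trivially implies $\aleph_0$-homogeneity.
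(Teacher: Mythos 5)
Your proof is correct, and your second paragraph in fact takes a slightly more direct route than the paper. The paper's own proof fixes a copy of $\F_{\aleph_0}$ as a free factor of $\F_{\kappa}$ in advance, shows that $\F_{\aleph_0}$ is $\aleph_0$-homogeneous as the union of the elementary chain $(\F_n)_{n\in\omega}$ via Lemma \ref{ChainOfAleph0Homogeneous}, upgrades this to \emph{strong} $\aleph_0$-homogeneity using countability of $\F_{\aleph_0}$ (for a countable structure, homogeneity and strong homogeneity coincide by back-and-forth), then moves the two given tuples into this fixed $\F_{\aleph_0}$ by auxiliary automorphisms $f_1,f_2$ of $\F_{\kappa}$, applies strong homogeneity there, and extends the resulting automorphism over the complementary free factor. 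You instead choose the free factor \emph{after} the tuples are given: a finite-rank free factor $\F_m$ ($m\geq 2$) containing both $\bar a$ and $\bar b$, elementary in $\F_{\kappa}$ (Theorem \ref{FmElementaryInFn} composed with Lemma \ref{FKappaElementaryInFLambda}, via an intermediate countable free factor containing $\F_m$), so that the types agree in $\F_m$, where Corollary \ref{FreeGroupHomogeneous} already gives \emph{strong} homogeneity; the automorphism of $\F_m$ then extends by the identity on a complement. This bypasses Lemma \ref{ChainOfAleph0Homogeneous}, the countable back-and-forth upgrade, and the automorphisms $f_1,f_2$; what the paper's route additionally records is the $\aleph_0$-homogeneity of $\F_{\aleph_0}$ itself, but as a proof of the stated proposition your argument is leaner. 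Two small remarks: your first paragraph is dispensable, as you note, and for uncountable $\kappa$ the phrase about a chain of countable free factors with union $\F_{\kappa}$ would require a transfinite chain rather than an $\omega$-chain — but since your second paragraph proves strong $\aleph_0$-homogeneity directly, nothing is lost.
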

\begin{proof}
As noted in Section \ref{ModelTheorySec}, $\F_{\aleph_0}$ is the union of the elementary chain $(\F_n)_{n \in \omega}$. By Theorem \ref{MainResult}, this is in fact an elementary chain of $\aleph_0$-homogeneous structures, so by Lemma \ref{ChainOfAleph0Homogeneous}, $\F_{\aleph_0}$ is $\aleph_0$-homogeneous. We pick an injection $\aleph_0 \hookrightarrow \kappa$, and see $\F_{\aleph_0}$ as a free factor of $\F_{\kappa}$ via the induced  natural embedding. Recall that by Lemma \ref{FKappaElementaryInFLambda}, this embedding is elementary in $\F_{\kappa}$.
Now let $\bar{a},\bar{b}$ be finite tuples of $\F_{\kappa}$, with $tp^{\F_{\kappa}}(\bar{a})=tp^{\F_{\kappa}}(\bar{b})$. As $\bar{a}$ is a finite tuple, we can find $f_1\in Aut(\F_{\kappa})$ such that $f_1(\bar{a})=\bar{a}^{\prime}\subset \F_{\aleph_0}$.
Similarly, we can find $f_2\in Aut(\mathbb{F})$ such that $f_2(\bar{b})=\bar{b}^{\prime}\subset F_{\aleph_0}$. 
Since $\F_{\aleph_0}$ is an elementary substructure, we have  $tp^{\F_{\aleph_0}}(\bar{a}^{\prime})=tp^{\F_{\aleph_0}}(\bar{b}^{\prime})$. 
But $\F_{\aleph_0}$ is $\aleph_0$-homogeneous and countable, therefore it is strongly $\aleph_0$-homogeneous. 
Hence, there is $f$ in $\Aut(\F_{\aleph_0})$ such that $f(\bar{a}^{\prime})=\bar{b}^{\prime}$. 
As $\F_{\aleph_0}$ is a free factor of $\F_{\kappa}$, there is $f^{\prime}\in \Aut(\F_{\kappa})$ extending $f$. 
Finally, $f_2^{-1}\circ f^{\prime}\circ f_1$ is an automorphism of $\F_{\kappa}$ which sends $\bar{a}$ to $\bar{b}$.
 
\end{proof}

\section{Characterization of elements of primitive type} \label{ElementsOfprimitiveTypeSec}
We want to understand better the type $p_0$ of a primitive element in a free group. The aim of this section is to give a characterization of elements which realize this type in a finitely generated group $G$. Note that if such an element exists, it implies in particular that $G$ is elementary equivalent to a non abelian free group (the type of an element of $G$ contains in particular all the closed formulas which are true on $G$). Thus, as noted in Remark \ref{HypTowersAreHyp}, the group $G$ must be torsion-free hyperbolic.

Let $\F_n$ be the free group on $a_1, \ldots, a_n$ for $n \geq 2$. We show the following result
\begin{prop} \label{ElementsOfPrimitiveType} Let $G$ be a  finitely generated group. Let $(u_1, \ldots, u_k)$ be a $k$-tuple of elements of $G$ for $1 \leq k \leq n$ such that $\tp^G(u_1, \ldots, u_k)= \tp^{\F_n}(a_1, \ldots, a_k)$. Then the subgroup $H_u = \langle u_1, \ldots, u_k \rangle$ is free of rank $k$, and $G$ admits a structure of hyperbolic tower over $H_u$. 
\end{prop}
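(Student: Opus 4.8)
The plan is to apply Proposition \ref{MainResult2} (or directly Proposition \ref{MainResultBis}) with $G' = \F_n$, taking $\bar{u} = (u_1, \ldots, u_k)$ and $\bar{v} = (a_1, \ldots, a_k)$, and $B = \emptyset$. First I would dispose of the converse-looking direction: since $(a_1, \ldots, a_k)$ extends to a basis of $\F_n$, the free group $\F_n$ is freely indecomposable with respect to $H_v = \langle a_1, \ldots, a_k\rangle = \F_k$ only if $k = n$; in general we should instead let $V$ be the smallest free factor of $\F_n$ containing $\langle a_1, \ldots, a_k\rangle$, which is exactly $\F_k$ (on $a_1, \ldots, a_k$), and similarly let $U$ be the smallest free factor of $G$ containing $H_u$. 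I would first observe that $H_u$ is free of rank $k$: any first-order statement expressing a non-trivial relation $w(a_1, \ldots, a_k) = 1$ would fail in $\F_n$ (as the $a_i$ are a basis), hence by equality of types fails in $G$, so $H_u$ satisfies no relation and the map $a_i \mapsto u_i$ is an isomorphism $\F_k \to H_u$.

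Next I would run the dichotomy of Proposition \ref{MainResultBis} (applied to $U \le G$, the tuple $\bar u$, and $\F_n$, with $\bar v = (a_1, \ldots, a_k)$; note $\tp^G(\bar u) = \tp^{\F_n}(\bar v)$ by hypothesis, and $U$ is finitely presented, freely indecomposable with respect to $H_u$). One alternative is a non-injective preretraction $U \to G$ with respect to the JSJ decomposition of $U$ relative to $H_u$; by Proposition \ref{RetractionPlus} this yields a non-injective preretraction $U \to U$ (using that $U$ is a retract — indeed a free factor — of $G$, and that $U$ is non-cyclic since $\F_n$ has rank $\geq 2$ and the $a_i$ are independent, forcing $U$ of rank $\geq 2$ when $k \geq 1$; the edge case $k$ small needs a line), and then Proposition \ref{Retraction} gives a hyperbolic floor structure on $U$ over a subgroup containing the rigid vertex group holding $H_u$. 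But here is where I must be careful: by the hypothesis $\tp^{\F_n}(\bar u) = \tp^{\F_n}(\bar v)$ and $\F_n \equiv G$, one should instead argue symmetrically — apply the proposition with $G$ and $\F_n$ swapped. The clean route: since $(a_1,\ldots,a_k)$ is primitive in $\F_n$, the identity embedding $\F_k \hookrightarrow \F_n$ sends $(a_1, \ldots, a_k)$ to itself, so by the "symmetric" instance of Proposition \ref{MainResultBis} (with $U' = \F_k$ inside $\F_n$, tuple $\bar v$, target $G$) we get an embedding $\F_k \hookrightarrow G$ sending $a_i \mapsto u_i$, OR a non-injective preretraction $\F_k \to \F_n$ — but by Proposition \ref{NoHypFloorInFreeGroup} and Proposition \ref{Retraction} the latter is impossible (a non-injective preretraction of $\F_k$ with respect to its JSJ, which is trivial, would be a non-injective map restricting to a conjugation on the single rigid vertex group $\F_k$, i.e. injective — contradiction; alternatively it would produce a hyperbolic floor structure on $\F_k$, forbidden). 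Hence we always get the embedding $j: \F_k \hookrightarrow G$, $a_i \mapsto u_i$.

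Now I would feed this embedding back into Proposition \ref{MainResult2} (with $B = \emptyset$, $G' = G$... no — the right statement is Proposition \ref{MainResultBis} with source $U \le G$ containing $H_u$ and target $\F_n$): we get either an embedding $U \hookrightarrow \F_n$ sending $\bar u \mapsto (a_1,\ldots,a_k)$, or a non-injective preretraction $U \to G$ with respect to the JSJ of $U$ relative to $H_u$. In the first case, combined with the embedding $\F_k \hookrightarrow G$ just constructed and the co-Hopf property (Theorem \ref{RelativeCoHopf}) one shows $U$ has rank exactly $k$, so $U \cong \F_k \cong H_u$ and in fact $H_u$ is a free factor of $G$; then $G$ is a hyperbolic tower over $H_u$ trivially (take $m = 0$, $G^0 = H_u * F * \ldots$, splitting off a basis complement, and noting $G$ torsion-free hyperbolic so the complementary free factor and any closed-surface factors assemble as in Definition \ref{HypTower}). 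In the second case, Propositions \ref{RetractionPlus} and \ref{Retraction} give a hyperbolic floor $(U, U', r)$ with $U'$ containing the rigid vertex group of $\Lambda$ that holds $H_u$; we then need to descend: $U'$ is again torsion-free hyperbolic, $H_u$ lies in a non-surface-type vertex of the floor decomposition (this is exactly the "$R_0$ contains $H_u$" clause of Proposition \ref{Retraction}), and we recurse — the type of $\bar u$ is still $\tp^{\F_n}(\bar a)$, so we may re-run Proposition \ref{MainResultBis} with $U$ replaced by the smallest free factor of $U'$ containing $H_u$. The recursion terminates by a complexity argument (Kneser/Grushko-type bound on the number of floors, as is standard for hyperbolic towers, cf. the proof of Theorem \ref{ElementaryEqToFree}), producing the full tower $G = G^0 \ge G^1 \ge \cdots \ge G^m \ge H_u$ of Definition \ref{HypTower}.

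The main obstacle I expect is bookkeeping the recursion and the ellipticity of $H_u$ through each floor: one must verify at each stage that $H_u$ sits inside a non-surface-type vertex group of the floor decomposition (so that the tower axioms are met and $H_u$ survives to the next level), that passing to the smallest free factor containing $H_u$ preserves the hypotheses of Proposition \ref{MainResultBis} (finite presentation, free indecomposability relative to $H_u$, and equality of the relevant types — the last being automatic since the type of $\bar u$ is unchanged), and that the process strictly decreases some well-founded complexity so it halts. The use of Proposition \ref{NoHypFloorInFreeGroup} to kill the "bad" alternative for $\F_n$ itself is the other point requiring care, since it is what forces the embedding $H_u \hookrightarrow G$ to exist in the first place rather than collapsing.
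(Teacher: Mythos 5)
Your overall machinery is the right one (the dichotomy of Proposition \ref{MainResultBis} with target $\F_n$, then Propositions \ref{RetractionPlus} and \ref{Retraction} to turn the preretraction alternative into a hyperbolic floor over a subgroup containing $H_u$), but the way you organize the descent is where the proof breaks. The paper does not recurse over smallest free factors: it takes $U$ to be a \emph{minimal hyperbolic subtower} of $G$ containing $H_u$, whose existence follows from Sela's descending chain condition on $G$-limit groups (Theorem 1.12 of \cite{Sel7}, via the lemma preceding the proposition), shows that if $U \neq H_u$ there is no embedding $U \hookrightarrow \F_n$ sending $u_i$ to $a_i$, and then the floor $(U,U',r)$ with $H_u \leq U'$ contradicts minimality. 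With that formulation both termination and the terminal case are immediate: if $U = H_u$, the conclusion is literally the statement that $H_u$ is a subtower. Your substitute for this device, a recursion whose halting is justified only by an unspecified ``Kneser/Grushko-type bound on the number of floors'', is not substantiated; the accessibility you need is exactly the DCC on $G$-limit groups (each floor retraction and each projection onto a free factor is a proper epimorphism between $G$-limit groups), and without invoking it your induction has no well-founded measure.

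The step that actually fails is your terminal case. From ``$H_u$ is a free factor of the current group'' you conclude that $G$ is a hyperbolic tower over $H_u$ ``trivially, take $m=0$, splitting off a basis complement''. Definition \ref{HypTower} with $m=0$ requires the complement of $H_u$ to be a free product of a free group and closed surface groups of Euler characteristic at most $-2$; but the complement is merely some finitely generated torsion-free hyperbolic group (it need not be free -- there is no ``basis complement'' since $G$ is not assumed free), and ruling out other complements requires using the type hypothesis again, i.e.\ essentially the content of the proposition being proved. The same problem recurs when you try to assemble your alternating floor/free-factor chain into a single tower over $H_u$: the free-factor complements accumulated along the way must end up in the bottom level in the allowed form, and you leave this as bookkeeping. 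In the paper's setup this issue never arises, because the descent is along subtowers, so the complementary pieces are already packaged as the free and surface factors of the bottom level. Two smaller points: you never establish that $G$ is torsion-free hyperbolic, which every proposition you invoke requires -- the paper gets it from the type equality, which gives $G \equiv \F_n$, hence a tower structure over the trivial subgroup by Theorem \ref{ElementaryEqToFree} and hyperbolicity by Remark \ref{HypTowersAreHyp}; and your ``symmetric'' application of Proposition \ref{MainResultBis} to produce an embedding $\F_k \hookrightarrow G$ is superfluous, since once $H_u$ is known to be free of rank $k$ the map $a_i \mapsto u_i$ is already such an embedding.
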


\begin{rmk} The fundamental group of the connected sum of four projective planes does not admit any non trivial structure of hyperbolic tower, so it admits no structure of hyperbolic tower over a cyclic subgroup. This implies that the generic type is not realised in this model of the theory of free groups.
\end{rmk}

To prove Proposition \ref{ElementsOfPrimitiveType}, we need the following definition:
\begin{defi} A hyperbolic subtower of a finitely generated group $G$ is a subgroup $G'$ of $G$ such that $G$ has a structure of hyperbolic tower over $G'$. If $G_1$ and $G_2$  are hyperbolic subtowers of $G$, we say $G_2$ is smaller than $G_1$ if $G_1$ admits a structure of hyperbolic tower over $G_2$ (so in particular $G_2$ is a subgroup of $G_1$).
\end{defi}

\begin{lemma} If $G$ is torsion-free hyperbolic, for any subset $A$ of $G$ there exists a minimal hyperbolic subtower containing $A$.
\end{lemma}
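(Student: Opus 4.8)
The plan is to find a minimal element among hyperbolic subtowers of $G$ containing $A$, ordered by the relation ``smaller than'' from the preceding definition. The obstacle is that there is no a priori bound on the number of floors, and the class of subtowers containing $A$ is not obviously closed under intersection, so the argument has to produce a minimal element by a descent/chain argument rather than by intersecting. The key structural fact I would use is that each hyperbolic floor $(G^k, G^{k+1}, r_k)$ ``uses up'' some complexity: passing from $G^k$ to $G^{k+1}$ removes at least one surface-type vertex of positive complexity, and by Remark \ref{HypTowersAreHyp} every $G^k$ is again torsion-free hyperbolic, so the relevant complexities (ranks of free parts, Euler characteristics of surface pieces) are controlled by invariants of $G$ itself.

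First I would observe that $G$ is a hyperbolic subtower of itself containing $A$ (take $m=0$, $G^0 = G$, with the trivial free/surface decomposition $G = G * 1$ — here we are implicitly using that $G$ is torsion-free hyperbolic, hence either $G$ is already of the required form at level $m$, or one keeps $A \subseteq G$ trivially), so the class of hyperbolic subtowers containing $A$ is nonempty. Next I would argue that one cannot have an infinite strictly descending chain $G = G_0 > G_1 > G_2 > \cdots$ of hyperbolic subtowers of $G$ each containing $A$, with $G_{i}$ smaller than $G_{i-1}$. Indeed, $G_{i-1}$ has a hyperbolic tower structure over $G_i$ with at least one floor (strictness), and each floor strictly decreases a well-founded complexity measure: concatenating the tower structures of the $G_{i-1}$ over $G_i$ would give an infinite hyperbolic tower structure on $G$ over $\bigcap_i G_i$, which is impossible since a tower structure on the fixed finitely generated torsion-free hyperbolic group $G$ has boundedly many floors. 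The bound on the number of floors is the step I expect to be the real work: it follows from the fact that each floor decomposition is a nontrivial graph-of-groups decomposition of $G^k$ with surface vertices of Euler characteristic $\leq -2$ (or punctured tori), and an accessibility-type argument (Dunwoody accessibility, already invoked in the proof of Proposition \ref{ModGroupStabilizes}) bounds the total complexity that can be peeled off; alternatively one cites that this boundedness is part of Sela's analysis of towers, as used implicitly in \cite{Sel1, PerinElementary}.

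Given well-foundedness of the ``smaller than'' order on hyperbolic subtowers containing $A$, I would then conclude: take any hyperbolic subtower $G'$ containing $A$ (e.g. $G$ itself); if it is not minimal, there is a strictly smaller one, and iterating produces a strictly descending chain, which must terminate by the previous paragraph. The terminal element $G_{\min}$ is then a hyperbolic subtower containing $A$ which admits no proper hyperbolic tower structure over a subgroup still containing $A$; by transitivity of the tower relation (concatenation of tower structures is again a tower structure — one checks that stacking the floor sequences and the final free/surface decomposition satisfies Definition \ref{HypTower}, with $A$ staying inside a non-surface-type vertex group at each stage), $G_{\min}$ is the desired minimal hyperbolic subtower containing $A$. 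The only point requiring care is that the condition ``$A$ is contained in a non-surface-type vertex group'' is preserved under concatenation and under passing to a smaller subtower, which is immediate from the definitions since in Definition \ref{HypTower} the subgroup $H$ (here the subtower containing $A$, hence $A$ itself) is required to lie in a non-surface-type vertex group of each floor.
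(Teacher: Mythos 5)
Your overall scaffolding (the class of subtowers containing $A$ is nonempty since $G$ works, transitivity of the tower relation under concatenation, and reduction to termination of strictly descending chains) is fine and matches what the paper leaves implicit. But the step you yourself identify as ``the real work'' --- that a strictly descending chain of subtowers must terminate --- is exactly the step you have not actually supplied, and the justifications you sketch do not hold up. You assert that ``each floor strictly decreases a well-founded complexity measure'' without naming one, and the natural candidates fail: since $G^{k+1}$ is a retract of $G^k$, invariants such as the rank or the first Betti number are only non-increasing, not strictly decreasing (for instance, gluing a four-holed sphere along boundary curves whose homology classes already sum to zero in the lower level changes neither $b_1$ nor, in general, the rank). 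Your appeal to Dunwoody accessibility is also misplaced: in this paper it is invoked (via Guirardel--Levitt) to bound sequences of refinements of trees for a finitely presented group, and classically it concerns splittings over finite subgroups, whereas here each floor is a cyclic splitting of a different, successively smaller group; to run an accessibility argument you would have to pass to a concatenated/refined cyclic decomposition of $G$ itself, check it is suitably reduced, and invoke an acylindrical-accessibility statement --- none of which appears in your sketch. So as written the proof has a genuine gap at its central point, papered over by a citation to an unspecified ``analysis of towers''.

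For comparison, the paper's proof is short but rests on a precise, different input: a hyperbolic subtower is a retract of the finitely generated group $G$, hence finitely generated, hence a $G$-limit group; a strictly descending chain of subtowers then yields an infinite sequence of proper epimorphisms between $G$-limit groups (the tower retractions), contradicting Sela's descending chain condition for $G$-limit groups (Theorem 1.12 of \cite{Sel7}). If you want to keep your complexity-based strategy, you would need to either prove a bound on the number of floors of a tower over a fixed torsion-free hyperbolic group (which is not an off-the-shelf accessibility statement), or replace that step by the limit-group argument above, which is exactly what the DCC buys you for free.
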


Note that the minimal hyperbolic subtower containing $\{1\}$ is precisely Sela's elementary core, defined in \cite[Definition 7.5]{Sel7}.

\begin{proof} Any subtower of $G$ is finitely generated, since it is a retract of $G$. As a finitely generated subgroup of $G$, it is a $G$-limit group. By Theorem 1.12 in \cite{Sel7}, an infinite sequence of epimorphisms between $G$-limit groups stabilizes. 
\end{proof}

\begin{proof}[Proof of Proposition \ref{ElementsOfPrimitiveType}] It is easy to see that $H_u$ is free of rank $k$. 

Let $U$ be a minimal subtower of $G$ containing $H_u$. If $U=H_u$, the result holds, so let us assume this is not the case. Note that, as a minimal subtower containing $H_u$, the group $U$ is freely indecomposable with respect to $H_u$. Suppose $j$ is an injective morphism $U \to \F$ sending $u_i$ to $a_i$: it must send $U$ into $\langle a_1, \ldots, a_k \rangle$ since this is the minimal free factor of $\F$ containing $j(H_u)$. But $j(H_u) = \langle a_1, \ldots, a_k \rangle$ which contradicts injectivity of $j$. Thus there are no injective morphisms $j:U \to \F$ sending $u_i$ to $a_i$.

As noted above, $G$ is torsion-free hyperbolic, so we can apply Theorem \ref{MainResult}. Thus there exists a non-injective preretraction $U \to G$ with respect to a JSJ decomposition $\Lambda$ of $U$ in which $H_u$ is elliptic. By Proposition \ref{RetractionPlus}, since $G$ is a hyperbolic tower over $U$, there exists a non injective preretraction $U \to U$ with respect to $\Lambda$. Finally, applying Proposition \ref{Retraction}, we get that there is a retraction $r$ from $U$ to a proper subgroup $U'$ containing $H_u$ such that $(U, U', r)$ is a hyperbolic floor. This contradicts the minimality of $U$.
\end{proof}

In fact, we will show that the converse of Theorem \ref{ElementsOfPrimitiveType} holds, so that we have
\begin{thm} \label{ElementsOfPrimitiveTypeIFF}Let $G$ be a finitely generated group. Let $(u_1, \ldots, u_k)$ be a $k$-tuple of elements of $G$ for $k \leq n$. Then $\tp^G(u_1, \ldots, u_k)= \tp^{\F_n}(a_1, \ldots, a_k)$ if and only if the subgroup $H_u= \langle u_1, \ldots, u_k \rangle$ is free of rank $k$ and $G$ admits a structure of hyperbolic tower over $U$.
\end{thm}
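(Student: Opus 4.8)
The plan is to prove the two directions separately. One direction is already Proposition \ref{ElementsOfPrimitiveType}: if $\tp^G(u_1,\dots,u_k)=\tp^{\F_n}(a_1,\dots,a_k)$ then $H_u$ is free of rank $k$ and $G$ is a hyperbolic tower over $H_u$. So the real content left is the converse: assuming $H_u=\langle u_1,\dots,u_k\rangle$ is free of rank $k$ and $G$ admits a structure of hyperbolic tower over $U=H_u$, we must show $\tp^G(u_1,\dots,u_k)=\tp^{\F_n}(a_1,\dots,a_k)$. First I would record that since $H_u$ is free of rank $k$, the map $a_i\mapsto u_i$ extends to an embedding of the free factor $\langle a_1,\dots,a_k\rangle$ of $\F_n$ onto $H_u\le G$; and since $k\le n$, we may as well argue over $\F_k$ (using that $\langle a_1,\dots,a_k\rangle$ is an elementary subgroup of $\F_n$ by Theorem \ref{FmElementaryInFn}, so $\tp^{\F_n}(a_1,\dots,a_k)=\tp^{\F_k}(a_1,\dots,a_k)$ when $k\ge 2$; the cases $k=0,1$ are trivial). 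Thus it suffices to show that $(u_1,\dots,u_k)$ in $G$ and $(a_1,\dots,a_k)$ in $\F_k$ have the same type.

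The key tool for the converse is the result of Sela that gives the ``only if'' half of Theorem \ref{ElementaryEqToFree} in a relative form: if $G$ is a hyperbolic tower over a subgroup $H$, then the inclusion $H\hookrightarrow G$ is an elementary embedding. This is exactly the statement that should be invoked (it is implicit in the machinery of \cite{Sel6}, \cite{Sel7}; in the notation of the paper it is the relative version of Theorem \ref{ElementaryEqToFree}, and indeed the whole point of preretractions and Proposition \ref{Retraction} is to furnish the converse direction of such statements). Granting it, since $G$ is a hyperbolic tower over $U=H_u$, the embedding $H_u\hookrightarrow G$ is elementary, hence $\tp^G(u_1,\dots,u_k)=\tp^{H_u}(u_1,\dots,u_k)$. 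But $H_u$ is free of rank $k$ with basis $u_1,\dots,u_k$, so $\tp^{H_u}(u_1,\dots,u_k)=\tp^{\F_k}(a_1,\dots,a_k)$ via the obvious isomorphism, and combining with the reduction in the previous paragraph gives $\tp^G(u_1,\dots,u_k)=\tp^{\F_n}(a_1,\dots,a_k)$, as desired.

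For the forward direction I would simply cite Proposition \ref{ElementsOfPrimitiveType}, which has already been proved: it shows $H_u$ is free of rank $k$ and, via a minimality argument using Theorem \ref{MainResult}, Proposition \ref{RetractionPlus} and Proposition \ref{Retraction}, that the minimal subtower $U$ containing $H_u$ must equal $H_u$, so $G$ is a hyperbolic tower over $H_u$. Then, in the statement of Theorem \ref{ElementsOfPrimitiveTypeIFF}, ``hyperbolic tower over $U$'' should be read as ``hyperbolic tower over $H_u$'' (there is a slight notational slip in the statement). So the only genuinely new ingredient is the ``if'' direction, and within it the one step that carries real weight is the appeal to the relative elementary-embedding theorem for hyperbolic towers; everything else is bookkeeping about free factors and the already-established elementarity of $\langle a_1,\dots,a_k\rangle$ in $\F_n$.

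The main obstacle, then, is making the invocation of Sela's relative result precise: one needs that a hyperbolic tower $G$ over $H$ is an elementary extension of $H$, not merely elementarily equivalent to a free group. I expect the paper to quote this (it is, morally, the ``hard direction'' of the tower characterization, dual to Proposition \ref{ElementsOfPrimitiveType}), and with it in hand the proof of Theorem \ref{ElementsOfPrimitiveTypeIFF} is short: forward direction is Proposition \ref{ElementsOfPrimitiveType}; backward direction is the elementarity of $H_u\hookrightarrow G$ plus the elementarity of $\langle a_1,\dots,a_k\rangle \hookrightarrow \F_n$, both reducing the computation of types to the free group $\F_k$ where the two tuples are literally bases.
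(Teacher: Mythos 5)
Your treatment of the forward direction (citing Proposition \ref{ElementsOfPrimitiveType}) and of the converse for $k\geq 2$ matches the paper: one quotes Sela's relative result (stated in the paper as Theorem \ref{ConverseSela}) to get that $H_u$ is elementarily embedded in $G$, and the elementarity of $\langle a_1,\ldots,a_k\rangle$ in $\F_n$ to identify the types. However, there is a genuine gap in your argument at $k=1$, and it is not a minor bookkeeping issue. Theorem \ref{ConverseSela} requires the base subgroup to be \emph{non abelian}, and this hypothesis cannot be dropped: when $k=1$ the subgroup $H_u=\langle u_1\rangle$ is infinite cyclic, and a cyclic group can never be elementarily embedded in a model of $T_{fg}$ (the sentence asserting the existence of two non-commuting elements already separates them). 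For the same reason your claim that the reduction $\tp^{\F_n}(a_1,\ldots,a_k)=\tp^{\F_k}(a_1,\ldots,a_k)$ is ``trivial'' for $k=1$ is false: $\tp^{\F_1}(a_1)$ contains the statement that all elements commute, which is not in $\tp^{\F_n}(a_1)$. So the step $\tp^G(u_1)=\tp^{H_u}(u_1)$ simply fails, and the converse direction is unproved exactly in the case $k=1$ --- which is the case of the generic type $p_0$, i.e.\ the case the paper most cares about.

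The paper closes this gap with a separate argument occupying half of its proof. It takes $G_0$ to be the bottom floor of the tower structure of $G$ over $\langle u\rangle$; by the definition of hyperbolic tower $G_0$ is non abelian, so Theorem \ref{ConverseSela} applies to $G_0\hookrightarrow G$, reducing the problem to computing $\tp^{G_0}(u)$. One may assume $G_0$ minimal (no hyperbolic floor structure over a proper subgroup containing $u$); then $G_0$ is a free product of $\langle u\rangle$ with possibly a free group and closed surface groups, and the paper rules out the surface factors: a surface group of characteristic at most $-2$ other than the sum of four projective planes carries its own tower structure, and for $S=\langle a,b,c,d\mid a^2b^2=c^2d^2\rangle$ the paper exhibits an explicit retraction of $\langle u\rangle * S$ onto $\langle u, a\rangle$ making it a hyperbolic floor, contradicting minimality in either case. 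Hence $G_0$ has a free factor of the form $\langle u\rangle *\Z$, which is non abelian, so by Theorem \ref{ConverseSela} again it is elementary in $G_0$, and there $u$ is literally a basis element of a free group of rank $2$, giving $\tp^{G_0}(u)=\tp^{\F_n}(a_1)$. To complete your proof you would need to supply this argument (or an equivalent route around the abelian-base obstruction); as written, your invocation of the relative elementary-embedding theorem is applied outside its hypotheses precisely when $k=1$.
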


For this, we will need to use
\begin{thm}[\cite{SelaPrivate}] \label{ConverseSela}  Let $G$ be a torsion-free hyperbolic group. If $G$ has a structure of hyperbolic tower over a non abelian subgroup $H$, then $H$ is elementarily embedded in $G$.
\end{thm}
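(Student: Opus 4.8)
The plan is to argue by induction on the height $m$ of the hyperbolic tower $G = G^0 \geq G^1 \geq \cdots \geq G^m \geq H$, reducing the statement to two facts: \textbf{(a)} if $(G, G', r)$ is a hyperbolic floor then $G'$ is elementarily embedded in $G$; and \textbf{(b)} the base case $m = 0$, namely that a non-abelian free factor $H$ of $H * F * S_1 * \cdots * S_p$ (with $F$ free and the $S_i$ closed surface groups of Euler characteristic at most $-2$) is elementarily embedded in it. Granting these, the inductive step is immediate: $G^1 \geq \cdots \geq G^m \geq H$ is a hyperbolic tower over $H$ of height $m-1$ in which $H$ still lies in a non-surface vertex group, so $H \preceq G^1$ by induction, while \textbf{(a)} applied to the top floor $(G^0, G^1, r_0)$ gives $G^1 \preceq G^0 = G$; since elementary embedding is transitive, $H \preceq G$. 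Note that this is the relative strengthening of Theorem \ref{ElementaryEqToFree} (from ``elementarily equivalent'' with $H=\{1\}$ to ``elementarily embedded'' over an arbitrary non-abelian $H$), so one expects the proof to rest on the same circle of ideas.

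For \textbf{(a)} I would apply the Tarski--Vaught test and induct on the quantifier complexity of formulas $\phi(\bar{y})$ with parameters $\bar{a} \in G'$, proving $G \models \phi(\bar{a}) \Leftrightarrow G' \models \phi(\bar{a})$. Only the existential step requires work, and there only the direction $G \models {}\Rightarrow{} G' \models$ (the reverse being free from $G' \leq G$): given a witness $\bar{b} \in G$ to an existential formula $\exists \bar{x}\,\psi(\bar{x}, \bar{a})$ with $\psi$ a conjunction of equations and inequations, one must produce a witness in $G'$. Post-composing with the retraction $r$ preserves the equations but not the inequations, and the repair is Sela's formal-solutions (``Merzlyakov-type'') machinery applied to the surface vertex groups of the floor decomposition $\Lambda$: because $r$ sends each surface vertex group to a non-abelian image, the surface relations are non-degenerate, so over a test sequence of specializations of $\Lambda$ valued in $G'$ one can construct a formal solution of the system whose specializations are genuine witnesses in $G'$. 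The finitely many limit quotients controlling this construction are exactly those furnished by the shortening argument of Rips and Sela (Proposition \ref{ShorteningQuotientsAreProper} and Theorems \ref{FactorSet}, \ref{FiniteInjections}). Once the existential level is settled, the higher levels of the quantifier hierarchy follow by the usual bootstrapping, applying the $\Sigma_n$ case uniformly in the outer universal variables.

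For \textbf{(b)} I would run the same Tarski--Vaught argument, using that $H$ is a retract of $H * F * S_1 * \cdots * S_p$ and that each closed surface group of Euler characteristic at most $-2$ itself retracts onto a free group, which reduces matters to the free-factor case; alternatively one can simply invoke the relative version of ``free and surface factors of torsion-free hyperbolic groups are elementary,'' i.e.\ Sela's generalization in \cite{Sel7} of Perin's theorem \cite{PerinElementary}.

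The main obstacle is the existential step of \textbf{(a)}: producing the witness inside $G'$ requires the full strength of Sela's formal-solutions theory --- Merzlyakov's theorem for surfaces together with the analysis of the Makanin--Razborov / shortening diagram of the equation system --- carried out in the torsion-free hyperbolic relative setting. This is the technical core of Sela's solution to Tarski's problem, specialized here to hyperbolic floors; by comparison, the induction on tower height, the quantifier bootstrapping, and the base case are routine.
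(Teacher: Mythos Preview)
The paper gives no self-contained proof of this theorem: it attributes the result to Sela and says only that the argument ``follows exactly the proof of Theorem 7.6 in \cite{Sel7}'' (elementarity of the elementary core), observing that the minimality hypothesis there is not actually used. Your outline --- induct on tower height, reduce to the single-floor case \textbf{(a)} plus the free-product base case \textbf{(b)}, and invoke Sela's formal-solutions machinery for the floor case --- is the right shape and is consistent with that attribution.

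Where your proposal goes wrong is in its assessment of where the difficulty lies. You write that once the existential level is handled ``the higher levels of the quantifier hierarchy follow by the usual bootstrapping,'' and later call this bootstrapping ``routine.'' It is not. Merzlyakov-type formal solutions dispose of the $\forall\exists$ level; passing to $\forall\exists\forall$ and beyond is precisely the content of Sela's papers \cite{Sel3}--\cite{Sel6} on the Tarski problem (graded resolutions, the sieve procedure, the tree of stratified sets), a several-hundred-page analysis in which each added quantifier alternation requires genuinely new constructions. There is no off-the-shelf induction on quantifier depth. The packaging in \cite{Sel7} allows one to \emph{quote} the result, which is what the paper does; but your sketch reads as though the existential step is the heart of the matter and the rest a formality, which inverts the actual balance of work.

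Two minor points on \textbf{(b)}: first, $H$ need not be a free group, only a free factor of $G^m$ in the free-product sense, so your phrase ``non-abelian free factor $H$'' should be read carefully. Second, invoking a ``relative version of `free factors are elementary'\,'' is circular here, since that relative statement is essentially the theorem you are proving; the base case is handled by the same Sela machinery as \textbf{(a)}, not by an independent citation.
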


The proof of this result has not appeared in print, but follows exactly the proof of Theorem 7.6 in \cite{Sel7}, where Sela shows that in a torsion-free hyperbolic group $G$ which is not elementary equivalent to the free group, the embedding of a minimal hyperbolic subtower over $\{1\}$ (an "elementary core" of $G$) is elementary in $G$. The hypothesis that $G$ is not elementary equivalent to the free group only serves to deduce that such a minimal hyperbolic subtower is not trivial, and the minimality hypothesis is not used. 

\begin{proof}[Proof of Theorem \ref{ElementsOfPrimitiveTypeIFF}] Proposition \ref{ElementsOfPrimitiveType} proves one direction of this equivalence. To see the other, we suppose first that $k \geq 2$. The group $H_u = \langle u_1, \ldots, u_k \rangle$ is not abelian, so by Theorem \ref{ConverseSela}, it is elementarily embedded in $G$. Since it is free of rank $k$, we have $\tp^U(u_1, \ldots, u_k)=\tp^{\F_k}(a_1, \ldots, a_k)$ where $\F_k$ is the free factor of $\F_n$ generated by $a_1, \ldots, a_k$. Now by Theorem \ref{ConverseSela} again, this free factor is elementary in $\F_n$ so $\tp^{\F_k}(a_1, \ldots, a_k)= \tp^{\F_n}(a_1, \ldots, a_k)$. This proves the result for $k \geq 2$.

Suppose now $k=1$. Let $G_0$ be the bottom floor of the hyperbolic tower structure $G$ admits over $\langle u \rangle$. Note that $G_0$ is in particular non abelian.  By Theorem \ref{ConverseSela}, the embedding of $G_0$ in $G$ is elementary, so it is enough to show that $\tp^{G_0}(u) = \tp^{\F_n}(a_1)$. We assume without loss of generality that $G_0$ is minimal in the following sense: it does not admit a hyperbolic floor structure over a proper subgroup $G'_0$ containing $u$.

Now $G_0$ is the free product of $\langle u \rangle$ with possibly some closed surface groups of characteristic at most $-2$, and possibly a free group. Note that if a closed surface group of characteristic at most $-2$ is not the fundamental group of the connected sum of four projective planes, it has a non-trivial structure of hyperbolic tower. On the other hand, if $S = \langle a, b, c, d \mid a^2 b^2 = c^2 d^2 \rangle$ is the fundamental group of the connected sum of four projective planes, then the morphism $r$ from $\langle u \rangle * S$ to $\langle u \rangle * \langle a \rangle$ given by $r(u)=u$, $r(a)=a$, $r(c)=a$ and $r(b)=r(d) = u$ is a retraction which makes $(\langle u \rangle * S, \langle u, a \rangle, r)$ into a hyperbolic floor. Thus no closed surface group can appear in the free product decomposition of $G_0$ as it would contradict its minimality.

But now, $G_0$ has a free factor of the form $\langle u \rangle*\Z$, which by Theorem \ref{ConverseSela} is elementarily embedded in $G_0$ (and thus in $G$). Thus the type of $u$ in $G_0$ is the same as the type of $u$ in $\langle u \rangle * \Z$, which is precisely $\tp^{\F_n}(a_1)$.
\end{proof}

\section{Surface groups} \label{SurfaceCaseSec}
Let us now consider the fundamental group $S$ of a closed hyperbolic surface $\Sigma$ which is not the connected sum of three projective planes. By Theorem \ref{ElementaryEqToFree}, the group $S$ is elementary equivalent to the non abelian free group $\F$ over $a_1, \ldots, a_n$. We want to find elements whose type in $S$ is the same as the type $p_0$ of $a_1$ in $\F$. We already remarked that there are no such elements if $S$ is the connected sum of four connected planes, we can thus exclude this case as well.

Theorem \ref{ElementsOfPrimitiveTypeIFF} tells us $S$ must have a structure of hyperbolic tower over the subgroup generated by such an element. Let us recall the description of the non trivial hyperbolic towers structures $S$ admits given in \cite{PerinElementary}. Minor subsurfaces are defined by
\begin{defi} Let $\Sigma$ be a closed surface. Let $\Sigma_0$ be a proper connected subsurface of $\Sigma$. Denote by $\Sigma_1$ the closure of $\Sigma \setminus \Sigma_0$. We say that $\Sigma_0$ is a minor subsurface of $\Sigma$ if 
\begin{enumerate}
	\item $\Sigma_1$ is connected;
  \item $\chi(\Sigma_0) \geq \chi(\Sigma)/2$, with equality if and only if $\Sigma_0$ and $\Sigma_1$ are homeomorphic; 
	\item if $\Sigma_0$ is non orientable, so is $\Sigma_1$.
\end{enumerate}
\end{defi}

The following is stated as Theorem 5.20 in \cite{PerinElementary}:
\begin{prop} Let $S$ be the fundamental group of a closed hyperbolic surface $\Sigma$ which is not the connected sum of three or four projective planes. Then $S$ admits a structure of hyperbolic tower over a proper subgroup $H$ if and only if $H$ is a free factor of the fundamental group of a minor subsurface of $\Sigma$.
\end{prop}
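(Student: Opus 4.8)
The plan is to prove both implications by passing between the algebraic notion of a hyperbolic floor and the topology of multicurves on $\Sigma$, relying on two standard facts: every cyclic splitting of the closed surface group $S=\pi_1(\Sigma)$ is dual to a multicurve $c$ on $\Sigma$, and each component of $\Sigma\setminus c$ is a compact surface with non-empty boundary, hence has free fundamental group.

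\emph{The ``if'' direction.} Let $\Sigma_0$ be a minor subsurface with $H$ a free factor of $\pi_1(\Sigma_0)$, set $\Sigma_1=\overline{\Sigma\setminus\Sigma_0}$ and $c=\partial\Sigma_0$. Then $S$ splits as a graph of groups with vertex groups $\pi_1(\Sigma_0)$ and $\pi_1(\Sigma_1)$ and edge groups the maximal cyclic subgroups carried by the components of $c$. The heart of this direction is to build a retraction $r\colon S\to\pi_1(\Sigma_0)$ which is the identity on $\pi_1(\Sigma_0)$ and has non-abelian image on $\pi_1(\Sigma_1)$. Since $\Sigma_0$ is minor we have $\chi(\Sigma_0)\ge\chi(\Sigma_1)$, together with the equality and orientability refinements in the definition, and this is exactly what lets us define a homomorphism on the free group $\pi_1(\Sigma_1)$: identify a standard generating system of $\pi_1(\Sigma_1)$ with an initial part of one for $\pi_1(\Sigma_0)$, send the surplus generators to $1$, and check that each boundary word of $\Sigma_1$ is sent to the corresponding boundary element of $\pi_1(\Sigma_0)$ while the image stays non-abelian. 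Taking the surface vertex to be the one carrying $\pi_1(\Sigma_1)$ --- after subdividing it, if necessary, so that each surface piece is a punctured torus or has $\chi\le-2$, with the residual free part absorbed into the ground floor --- exhibits $(S,\pi_1(\Sigma_0),r)$ as a hyperbolic floor. Finally $\pi_1(\Sigma_0)$, being free, equals $H*F$ for a free group $F$, which is the form permitted of the bottom floor of a tower; hence $S$ is a hyperbolic tower over $H$.

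\emph{The ``only if'' direction.} Suppose $S$ is a hyperbolic tower over the proper subgroup $H$ and let $(S,G^1,r_0)$ be its first floor, with decomposition $\Lambda$ and surface-type vertex set $V_S$. Then $\Lambda$ is dual to a non-empty multicurve $c$; its vertex groups are the fundamental groups of the pieces of $\Sigma\setminus c$, all free, and $G^1$ is the free product of the fundamental groups of the not-$V_S$ pieces, hence itself free. Because $G^1$ is free, Proposition \ref{NoHypFloorInFreeGroup} forbids any further floor, so the tower has height one and $G^1=H*F'$ with $F'$ free (no closed-surface factor can appear in a free group); it therefore suffices to realise $G^1$ as a free factor of the fundamental group of a minor subsurface. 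For this one deletes from $\Sigma$ a suitably chosen folded ($V_S$) piece and, if this disconnects $\Sigma$ or its complement, reconnects the resulting pieces by embedded arcs; this produces a connected subsurface $\Sigma_0$ with connected complement whose fundamental group contains $G^1$, and restricting $r_0$ shows that $G^1$ is a retract --- hence a free factor --- of the free group $\pi_1(\Sigma_0)$. Arranging in addition the orientability clause (keeping a cross-cap in the complement when $\Sigma$ is non-orientable) and the inequality $\chi(\Sigma_0)\ge\chi(\Sigma)/2$ makes $\Sigma_0$ a minor subsurface, and $H$ is then a free factor of $\pi_1(\Sigma_0)$.

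\emph{The main obstacle.} The delicate step is the last one: deducing the sharp Euler characteristic inequality, with its equality and orientability clauses, from the bare existence of the retraction $r_0$. This needs a genuine complexity estimate for retractions of surface groups --- that one cannot fold a surface-type vertex onto $G^1$ with non-abelian image unless the retained subsurface carries at least half of $-\chi(\Sigma)$ --- together with a careful treatment of the low-complexity pieces (punctured tori, pairs of pants, small non-orientable subsurfaces) and of the two genuinely exceptional surfaces, the connected sums of three and four projective planes. By contrast the ``if'' direction, once the retraction is written down, comes down to routine normal-form bookkeeping that the boundary relators are respected and the floor hypotheses are met.
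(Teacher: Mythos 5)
First, a point of reference: the paper does not prove this proposition at all — it is quoted as Theorem 5.20 of \cite{PerinElementary} — so your attempt has to be measured against Perin's proof there. Your outline does capture the right geometric frame (floor decompositions of $S$ are dual to multicurves, the non-surface part is free, so by Proposition \ref{NoHypFloorInFreeGroup} the tower has exactly one floor), but it leaves gaps at precisely the points where the content of the theorem lies.

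In the ``only if'' direction, the step you yourself flag as ``the main obstacle'' — the inequality $\chi(\Sigma_0)\ge \chi(\Sigma)/2$ together with its equality and orientability clauses — is simply not proved: in Perin's argument this is the heart of the matter, obtained by analysing the restriction of the retraction to the surface-type pieces as a non-degenerate map of surface groups sending boundary into boundary and invoking complexity and finite-index lemmas for such maps (the analogues of Lemmas 3.10 and 3.12 of \cite{PerinElementary}); nothing in your sketch substitutes for this. Moreover, your route to ``$H$ is a free factor of $\pi_1(\Sigma_0)$'' uses the implication ``retract of a free group, hence free factor'', which is false in general: $\langle a^2b^3\rangle$ is a retract of the free group on $a,b$ (send $a\mapsto (a^2b^3)^{-1}$, $b\mapsto a^2b^3$) but is not a free factor, so realizing $G^1$ inside a minor subsurface group needs a different argument, and the ``delete a surface piece and reconnect by arcs'' construction is too vague to check even that $\pi_1(\Sigma_0)$ contains $G^1$ with connected complement. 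In the ``if'' direction, the retraction is only indicated in the orientable case by killing surplus generators; you do not verify the boundary-matching (with the conjugators needed for the map to be well defined on the graph of groups), you do not treat the non-orientable cases, and you do not show where the equality and orientability clauses of the minor-subsurface definition are actually used — which is their whole point. Finally, the parenthetical ``after subdividing'' cannot rescue the floor requirement that surface pieces be punctured tori or of characteristic at most $-2$: subdividing a piece only increases the Euler characteristics of the resulting pieces, so the low-complexity configurations (for instance the two pairs of pants obtained by cutting a genus-two surface along a pants decomposition) are exactly the cases your argument would have to confront separately, and it does not.
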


In particular, if $S_0$ is the fundamental group of a minor subsurface of $\Sigma$, by Theorem \ref{ConverseSela} it is an elementary subgroup of $S$ and $\tp^{S_0}(u) = \tp^{S}(u)$ for any element $u$ of $S_0$. Thus any primitive element $u$ of the free group $S_0$ is such that $\tp^{S}(u) = \tp^{\F}(a_1)$. 

If $u$ is an element of $S$ which represents a non separating simple closed curve $\gamma$ on $\Sigma$, it is easy to see that we can find a minor subsurface $\Sigma_0$ of $\Sigma$ containing $\gamma$ such that $u$ is primitive in $S_0 = \pi_1(\Sigma_0)$. Thus elements representing non separating simple closed curves on $\Sigma$ have the same type in $S$ as primitive elements in $\F_n$.

\begin{rmk} If $u$ represents a separating simple closed curve, it can be written as a non trivial product of commutators and squares, so its type is not $\tp^{\F}(a_1)$.
\end{rmk}

If $\Sigma$ is non orientable, it admits non separating simple closed curves which are $1$-sided and some which are $2$-sided. The elements representing each of these types of curves both have the type of $a_1$ in $\F$, but they are not in the same orbit under $\Aut(S)$. Thus, we see that non orientable surfaces are not homogeneous. To see this for all surfaces, we will have to consider also elements which do not represent simple closed curves. Denote by ${\cal C}(\Sigma)$ the set of elements of $S$ which represent simple closed curves of $\Sigma$.

\begin{prop} \label{SCCNotAType} Let $S$ be the fundamental group of a closed hyperbolic surface $\Sigma$ which is not the connected sum of three or four projective planes. There exists an element $u$ of $S - {\cal C}(\Sigma)$ such that $\tp^{S}(u) = \tp^{\F}(a_1)$.
\end{prop}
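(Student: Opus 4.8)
The plan is to exhibit an explicit element $u$ of $S$ which does not represent a simple closed curve but which, by the machinery of the previous section, has type $p_0 = \tp^{\F}(a_1)$. By Theorem \ref{ElementsOfPrimitiveTypeIFF}, it suffices to find $u \in S - \mathcal{C}(\Sigma)$ such that $\langle u \rangle$ is an infinite cyclic group over which $S$ admits a structure of hyperbolic tower; equivalently, by the description recalled above (Theorem 5.20 of \cite{PerinElementary}), it suffices to find a minor subsurface $\Sigma_0$ of $\Sigma$ and a primitive element $u$ of $S_0 = \pi_1(\Sigma_0)$ such that $u$ is not represented by a simple closed curve of the ambient surface $\Sigma$. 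Indeed, once such a $u$ is found, $S_0$ is elementarily embedded in $S$ by Theorem \ref{ConverseSela}, hence $\tp^S(u) = \tp^{S_0}(u) = \tp^{\F}(a_1)$ since $u$ is primitive in the free group $S_0$.

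First I would choose a minor subsurface $\Sigma_0$ of $\Sigma$ with $\pi_1(\Sigma_0)$ free of rank at least $2$; such a subsurface exists since $\Sigma$ is not the connected sum of three or four projective planes, so its Euler characteristic is at most $-2$ (one can take, e.g., a subsurface bounded by a single non-separating simple closed curve, or a once-holed torus / once-holed Klein bottle sitting inside $\Sigma$, adjusting to the orientability constraint in the definition of minor subsurface). Inside the free group $S_0 = \pi_1(\Sigma_0) = \langle x, y, \ldots \rangle$ I would then pick a primitive element which is ``complicated'' relative to the boundary — for instance $u = xy$ when $\Sigma_0$ is a once-holed torus with standard generators $x, y$ and boundary $[x,y]$, or more generally a primitive element of $S_0$ whose conjugacy class is not that of a boundary curve of $\Sigma_0$ nor of any simple curve of $\Sigma$. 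The element $u$ is primitive in $S_0$ by construction, hence has type $p_0$ in $S$; the point is to check it is not in $\mathcal{C}(\Sigma)$.

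The main obstacle is precisely this last verification: showing that the chosen primitive $u$ of $S_0$ is not represented by any simple closed curve of the whole surface $\Sigma$. For this I would use the fact that simple closed curves on $\Sigma$ which are contained (up to homotopy) in $\Sigma_0$ correspond to simple closed curves on $\Sigma_0$, while a primitive element of $\pi_1$ of a once-holed torus is represented by a simple closed curve of that subsurface only for the standard ``slope'' elements (the image of $\mathrm{SL}_2(\Z)$ acting on $x, y$); a generic primitive word such as $xy^2$ (or $x^2y$, which remains primitive) is primitive but is \emph{not} represented by a simple closed curve even of $\Sigma_0$, let alone of $\Sigma$, because self-intersection number of a primitive element is a homotopy invariant detectable from the word and is positive here. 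One must also rule out that $u$, though not simple inside $\Sigma_0$, becomes simple in $\Sigma$ after homotopies that leave $\Sigma_0$: but a free homotopy in $\Sigma$ between two curves carried by $\Sigma_0$ can be pushed into $\Sigma_0$ since $\Sigma_0$ is $\pi_1$-injective (its boundary curves are essential), so the minimal self-intersection number computed in $\Sigma_0$ bounds below that in $\Sigma$. This reduces the problem to the classical computation that a suitable primitive element of a surface-with-boundary group has positive self-intersection number, which is standard and which I would cite rather than reprove.

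Putting it together: pick the minor subsurface $\Sigma_0$ and the primitive $u \in S_0$ with positive self-intersection number as above; then $u \notin \mathcal{C}(\Sigma)$ by the $\pi_1$-injectivity argument, while $\tp^S(u) = \tp^{\F}(a_1)$ by Theorem \ref{ConverseSela} and Theorem \ref{ElementsOfPrimitiveTypeIFF}. This produces the desired element and completes the proof of Proposition \ref{SCCNotAType}. I expect the only delicate point to be handling the orientability condition in the definition of minor subsurface uniformly across all the allowed surfaces $\Sigma$ (orientable of genus $\geq 2$, and non-orientable of genus $\geq 5$, i.e. excluding the connected sums of three and four projective planes), which may require splitting into a couple of cases for the choice of $\Sigma_0$.
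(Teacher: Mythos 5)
Your reduction (find a primitive element $u$ of the fundamental group $S_0$ of a minor subsurface with $u \notin {\cal C}(\Sigma)$, then apply Theorem \ref{ConverseSela} and Theorem \ref{ElementsOfPrimitiveTypeIFF}) is exactly the right framework, and your incompressibility remark is sound: for an essential subsurface one can realize $\partial\Sigma_0$ geodesically, and then the $\Sigma$-geodesic of a class carried by $\Sigma_0$ lies in $\Sigma_0$, so non-simplicity in $\Sigma_0$ does pass to $\Sigma$. The genuine gap is in the step you yourself flagged as the main obstacle: your proposed witnesses do not work. In the once-holed torus \emph{every} primitive conjugacy class of $\pi_1(\Sigma_0)=F(x,y)$ is represented by a simple closed curve: the mapping class group of the once-holed torus induces all of $\mathrm{Out}(F_2)$ (both are $GL_2(\Z)$), so the homeomorphism orbit of the simple curve $x$ is the full set of primitive classes. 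Concretely, $xy^2$ is the image of $x$ under the square of the Dehn twist automorphism $x\mapsto xy$, $y\mapsto y$, hence is simple on $\Sigma_0$ (and therefore on $\Sigma$); the same goes for $x^2y$. Your parenthetical "only for the standard slope elements (the image of $SL_2(\Z)$ acting on $x,y$)" is precisely the problem: that image exhausts all primitives, so there is no primitive of a once-holed torus group with positive self-intersection, and the verification your proof hinges on fails. The argument could be repaired by choosing a different minor subsurface --- e.g.\ a pair of pants, where the only simple classes are boundary-parallel while $xy^2$ is primitive and not boundary-parallel, or a subsurface of rank at least $3$ --- but then you must check that such a minor subsurface exists for every allowed $\Sigma$ (including the small cases, where all minor subsurfaces have $\chi=-1$) and actually supply the non-simplicity verification; as written, neither is done.

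For comparison, the paper sidesteps any explicit choice of $u$ by a counting argument: primitive elements of $S_0$ grow exponentially in $S$, while ${\cal C}(\Sigma)$ grows at most polynomially by Corollary \ref{GrowthOfSimpleClosedCurves} (via the Birman--Series bound of Theorem \ref{GrowthGeodesicSCC}), so some primitive element of $S_0$ lies outside ${\cal C}(\Sigma)$ and has type $\tp^{\F}(a_1)$ by the same elementary-embedding argument you use. That route avoids both the delicate choice of subsurface and any self-intersection computation, at the cost of invoking the polynomial growth of simple closed geodesics; your route, once corrected, would have the merit of producing an explicit non-simple element of primitive type.
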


Since $\Aut(S)$ preserves ${\cal C}(\Sigma)$, Proposition \ref{SCCNotAType} implies
\begin{cor}  \label{SurfaceNotHomogeneous} Let $S$ be the fundamental group of a closed hyperbolic surface $\Sigma$ which is not the connected sum of four projective planes. Then $S$ is not homogeneous.
\end{cor}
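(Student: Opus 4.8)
The plan is to produce explicitly an element $u$ of $S$ which does not represent a simple closed curve, yet realizes the type $\tp^{\F}(a_1)$ of a primitive element. By Theorem \ref{ElementsOfPrimitiveTypeIFF}, it suffices to exhibit such a $u$ so that $H_u = \langle u \rangle$ is infinite cyclic and $S$ admits a structure of hyperbolic tower over $\langle u \rangle$; and by Theorem 5.20 (the proposition recalled just above), the latter amounts to finding a minor subsurface $\Sigma_0$ of $\Sigma$ such that $u$ is a primitive element of $S_0 = \pi_1(\Sigma_0)$.

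So first I would exploit the freedom we have in choosing the primitive element of $S_0$: once $\Sigma_0$ is a minor subsurface and $S_0$ is free of rank $\geq 2$ (which it is, since minor subsurfaces of a surface that is not the sum of three or four projective planes have enough genus/punctures), there are many primitive elements of $S_0$, and only a proper subset of them represent simple closed curves on $\Sigma_0$. For instance, if $x, y$ are two elements of a basis of $S_0$ with $x$ representing a boundary curve or an embedded curve and $y$ another basis element, then $u = xy$ (or $u = x y^2$, etc.) is still primitive in $S_0$ but, for a generic such choice, its conjugacy class in $S_0$ — hence in $S$, since $S_0$ is a free factor of a minor subsurface group and the curve complexity can only go up when passing to $S$ — is not represented by a simple closed curve on $\Sigma$. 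The key point is that the set ${\cal C}(\Sigma) \cap S_0$ is ``small'': simple closed curves on $\Sigma$ contained in $\Sigma_0$ form at most the simple closed curves of $\Sigma_0$ plus its boundary, and these do not exhaust the primitive conjugacy classes of the free group $S_0$ once $\rk S_0 \geq 2$. I would verify this by a direct combinatorial argument (e.g.\ word length / cyclic word structure, or bounded self-intersection number): a primitive element such as $a b a b^{-1}$ in $F_2$, or more safely a well-chosen primitive word of larger complexity, cannot be realized by a simple closed curve because its self-intersection number is positive, whereas simplicity forces self-intersection number zero.

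Then I would assemble the pieces: pick a minor subsurface $\Sigma_0$ of $\Sigma$ (this exists and has free fundamental group of rank at least $2$ under our hypotheses — one can take, say, a one-holed torus or its non-orientable analogue sitting inside $\Sigma$ and check the three defining conditions of a minor subsurface, using $\chi(\Sigma) \leq -2$); choose inside the free group $S_0$ a primitive element $u$ whose conjugacy class is not that of any simple closed curve of $\Sigma$; conclude by the recalled Theorem 5.20 that $S$ is a hyperbolic tower over $\langle u\rangle$, and hence by Theorem \ref{ElementsOfPrimitiveTypeIFF} that $\tp^S(u) = \tp^{\F}(a_1)$; and observe $u \notin {\cal C}(\Sigma)$ by construction. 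This proves Proposition \ref{SCCNotAType}, and Corollary \ref{SurfaceNotHomogeneous} then follows exactly as indicated: for the surfaces also covered by the earlier remark (non-orientable ones) we already have two non-conjugate elements of the same type represented by $1$-sided and $2$-sided non-separating curves; and in all remaining cases, a primitive element of $S_0$ which \emph{does} represent a non-separating simple closed curve (constructed as in the paragraph before Remark after ``non separating simple closed curves'') has the same type as the element $u$ just built, but lies in ${\cal C}(\Sigma)$ while $u$ does not, so no automorphism of $S$ — which must preserve ${\cal C}(\Sigma)$ — can carry one to the other.

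The main obstacle I expect is the second step: rigorously certifying that the chosen primitive word of $S_0$ is \emph{not} the conjugacy class of any simple closed curve on the ambient surface $\Sigma$ (not merely on $\Sigma_0$). One must rule out that, although the word is ``complicated'' inside $S_0$, some simple closed curve of $\Sigma$ crossing $\partial\Sigma_0$ could be freely homotoped into $\Sigma_0$ and land on it; this is handled by noting that a simple closed curve of $\Sigma$ whose free homotopy class lies in the free factor $S_0 = \pi_1(\Sigma_0) \leq \pi_1(\Sigma)$ can be homotoped into $\Sigma_0$ and then must be a simple closed curve of $\Sigma_0$, so it suffices after all to work inside $\Sigma_0$ — where the argument is the bounded-self-intersection one above, together with the standard fact that primitive elements of a free group realized by simple closed curves on a fixed surface of given topological type are constrained, while a generic primitive word is not. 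Making the word-length/self-intersection estimate clean and choosing the explicit witness $u$ so that all of this is transparent is where the real care lies.
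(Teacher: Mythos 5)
Your overall skeleton for deducing the corollary matches the paper's: produce an element of type $\tp^{\F}(a_1)$ lying outside ${\cal C}(\Sigma)$, recall that elements representing non-separating simple closed curves realize that same type, and conclude since $\Aut(S)$ preserves ${\cal C}(\Sigma)$. Where you genuinely diverge is in the proof of the key input, Proposition \ref{SCCNotAType}: the paper never exhibits an explicit witness, it compares growth rates — primitive elements of $S_0$ grow exponentially, while ${\cal C}(\Sigma)$ grows at most polynomially by the Birman--Series/Rivin/Mirzakhani count of simple closed geodesics (Theorem \ref{GrowthGeodesicSCC} and Corollary \ref{GrowthOfSimpleClosedCurves}). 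Replacing this by an explicit primitive word that is not a simple closed curve is legitimate in principle and would avoid those growth results, but as written your central step fails.

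The gap is in the choice of witness. You propose to take $\Sigma_0$ to be a one-holed torus (or its non-orientable analogue) and to find a primitive element of $S_0$ with positive self-intersection. In the one-holed torus there is no such element: primitive conjugacy classes of $\pi_1(\Sigma_0)\cong \F_2$ correspond bijectively (Nielsen) to primitive classes of $H_1\cong\Z^2$, i.e.\ to slopes, and every one of them is realized by an essential simple closed curve on $\Sigma_0$, hence on $\Sigma$. So for this choice your claim that simple classes ``do not exhaust the primitive conjugacy classes once $\rk S_0\geq 2$'' is false, and the genericity/self-intersection argument cannot start. (Your sample word $abab^{-1}$ is also not primitive in $\F_2$: its image in the abelianization is $2a$.) The construction can be repaired: take $\Sigma_0$ to be an embedded pair of pants with connected complement — this is a minor subsurface for every surface under consideration, and for the genus two surface it is essentially the only alternative to the one-holed torus — observe that the only simple closed curves on a pair of pants are boundary-parallel, and take a non-peripheral primitive element such as $xy^{-1}$ in $\pi_1(\Sigma_0)=\langle x,y\rangle$. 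The fact you invoke at the end (a simple closed curve of $\Sigma$ whose class is conjugate into the $\pi_1$-injective subsurface $\Sigma_0$ can be isotoped into $\Sigma_0$) then shows $xy^{-1}\notin{\cal C}(\Sigma)$, and the characterization of hyperbolic tower structures on $S$ recalled in Section \ref{SurfaceCaseSec} together with Theorem \ref{ElementsOfPrimitiveTypeIFF} gives $\tp^S(xy^{-1})=\tp^{\F}(a_1)$, as you intended. With that correction your alternative route goes through; without it, the witness simply does not exist where you look for it.
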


To prove Proposition \ref{SCCNotAType}, we need the following result. It appears as Remark 7.2 in \cite{BirmanSeriesSCC} (see also \cite{RivinSCC} and \cite{MirzakhaniGrowthSCC}).
\begin{thm} \label{GrowthGeodesicSCC} Let $\Sigma$ be a closed hyperbolic surface. There exists a polynomial $P_{\Sigma}$ such that the number of closed simple geodesic curves on $\Sigma$ of length at most $L$ is bounded above by $P_{\Sigma}(L)$.
\end{thm}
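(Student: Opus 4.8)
This is the statement of Theorem~\ref{GrowthGeodesicSCC}, which is classical (the quoted sources \cite{BirmanSeriesSCC}, \cite{RivinSCC}, \cite{MirzakhaniGrowthSCC} prove much more, including the precise asymptotics of the count inside a single mapping class group orbit). For our purposes only the polynomial upper bound is needed, and the plan is to obtain it by a direct count in Dehn--Thurston coordinates rather than through the Birman--Series theorem on the Hausdorff dimension of the union of simple geodesics.

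First I would fix, once and for all, a pants decomposition of $\Sigma$ by simple closed geodesics $\gamma_1,\dots,\gamma_\nu$; in the closed orientable case of genus $g$ one has $\nu=3g-3$, and in the non-orientable case I would instead pass to the orientation double cover $\widetilde\Sigma$, using that a simple closed geodesic of length $L$ on $\Sigma$ lifts to a simple closed geodesic, or to a pair of disjoint ones, of total length at most $2L$ on $\widetilde\Sigma$, and that this lift recovers the original curve up to the deck involution; so a polynomial bound on $\widetilde\Sigma$ yields one on $\Sigma$ (with the count multiplied by at most $2$ and $L$ replaced by $2L$). Recall the standard fact that an isotopy class of essential simple closed curve $c$ on $\Sigma$ is determined by its Dehn--Thurston coordinates $(m_i,t_i)_{1\le i\le\nu}$, where $m_i=i(c,\gamma_i)\in\N$ is the geometric intersection number and $t_i\in\Z$ is the twisting parameter around $\gamma_i$; moreover distinct isotopy classes of essential simple closed curves have distinct geodesic representatives. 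Hence it suffices to bound the number of coordinate tuples $(m_i,t_i)_i$ that can arise from a simple closed geodesic of length at most $L$.

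The two estimates that drive the argument, both elementary consequences of the collar lemma applied to the \emph{fixed} surface $\Sigma$, are: (i) $\sum_i m_i\le C_1 L$ and (ii) $\sum_i |t_i|\le C_2 L$, for constants $C_1,C_2>0$ depending only on $\Sigma$ and the chosen pants decomposition. For (i): the $M:=\sum_i m_i$ intersection points of $c$ with $\bigcup_i\gamma_i$ cut $c$ into $M$ geodesic arcs, each contained in one of the finitely many fixed pairs of pants and each essential relative to its boundary; since there is a uniform positive lower bound $\epsilon_1$ on the length of an essential geodesic arc in any of these pieces, $L\ge \ell(c)\ge M\epsilon_1$. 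For (ii): if $m_i\ge 1$, then within the embedded collar of $\gamma_i$ (of fixed positive width) the geodesic $c$ must wind around $\gamma_i$ at least $|t_i|$ times up to a bounded error determined by $m_i$, and each winding forces a subarc of $c$ that fellow-travels $\gamma_i$ and so contributes at least $\ell(\gamma_i)/2$ to the length; as $\ell(\gamma_i)$ is a fixed positive number this gives a length contribution bounded below by $c_0|t_i|$. If $m_i=0$, a connected simple closed geodesic disjoint from $\gamma_i$ is either isotopic to one of the $\gamma_j$ (only finitely many curves, which I would treat separately) or has $t_i=0$ by the Dehn--Thurston convention. Summing over $i$ gives (ii). Granting (i) and (ii), the number of integer vectors $(m_i)_i$ with $m_i\ge 0$ and $\sum_i m_i\le C_1 L$ is $\binom{\lfloor C_1L\rfloor+\nu}{\nu}=O(L^\nu)$, and likewise the number of $(t_i)_i\in\Z^\nu$ with $\sum_i|t_i|\le C_2L$ is $O(L^\nu)$; since the Dehn--Thurston map is injective on isotopy classes, each admissible tuple yields at most one simple closed geodesic, and we conclude that the number of simple closed geodesics of length at most $L$ is $O(L^{2\nu})$. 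Taking $P_\Sigma$ to be any polynomial of degree $2\nu$ (degree $6g-6$ in the closed orientable case) dominating this count for all $L\ge 0$ completes the argument.

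The main obstacle is estimate (ii): one must show quantitatively that a closed geodesic cannot realize a large total twisting parameter cheaply, i.e. that a geodesic does not wind around a curve more than its isotopy class forces. This is where the collar lemma and a fellow-traveling estimate in the universal cover are genuinely used; by contrast, (i) is a routine length-versus-intersection-number bound, and the remaining work is elementary lattice-point counting together with the bookkeeping in the non-orientable case via the orientation double cover.
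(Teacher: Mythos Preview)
The paper does not prove Theorem~\ref{GrowthGeodesicSCC} at all: it is quoted as a known result, with a pointer to Remark~7.2 of \cite{BirmanSeriesSCC} and to \cite{RivinSCC}, \cite{MirzakhaniGrowthSCC}. So there is no ``paper's own proof'' to compare against; you have supplied strictly more than the paper does.

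Your argument is the standard Dehn--Thurston coordinate count (this is essentially Rivin's line), and the overall strategy is correct: injectivity of the coordinates on isotopy classes plus linear bounds on $\sum_i m_i$ and $\sum_i |t_i|$ in terms of $L$ gives at most $O(L^{2\nu})$ admissible tuples. Estimate~(i) is unproblematic. For estimate~(ii) your sketch is right in spirit but a little loose: the clean way to phrase it is that, since $c$ is simple, all $m_i$ strands crossing the collar of $\gamma_i$ twist with the same sign (opposite twists would force an intersection inside the annulus), so the total twist $|t_i|$ really is the sum of the individual strand twists, and each unit of twist contributes length bounded below by a fixed fraction of $\ell(\gamma_i)$; the ``bounded error determined by $m_i$'' is then an additive $O(m_i)$ term, which is already controlled by~(i). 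With that said, your identification of (ii) as the nontrivial step is exactly right, and the reduction to the orientable case via the orientation double cover is fine.
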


From this we get:
\begin{cor} \label{GrowthOfSimpleClosedCurves} Let $S$ be the fundamental group of a closed hyperbolic surface $\Sigma$. Fix a finite generating set for $S$, denote by $B_n$ the set of elements of $S$ represented by words of length at most $n$. Then the growth function $f_{{\cal C}(\Sigma)}: \N \to \N$ given by $f_{{\cal C}(\Sigma)}(n) = |B_n \cap {\cal C}(\Sigma)|$ is bounded by a polynomial in $n$. 
\end{cor}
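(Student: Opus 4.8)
The plan is to combine the metric bound on simple closed geodesics from Theorem \ref{GrowthGeodesicSCC} with a comparison between geodesic length on $\Sigma$ and word length in $S$ with respect to a fixed finite generating set. More precisely, fix a hyperbolic metric on $\Sigma$ and a finite generating set $\Sigma_S$ for $S = \pi_1(\Sigma)$; each generator is represented by a loop of some length, and the lengths of these loops are bounded above by a constant $C$. Then any element of $S$ of word length at most $n$ is represented by a loop of length at most $Cn$, hence (after tightening to a geodesic) by a closed geodesic of length at most $Cn$. In particular an element of $B_n \cap \mathcal{C}(\Sigma)$ is represented by a \emph{simple} closed geodesic of length at most $Cn$.

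The one subtlety is that a conjugacy class of elements of $S$ corresponds to a free homotopy class of loops, and hence to a unique closed geodesic, whereas $B_n \cap \mathcal{C}(\Sigma)$ is a set of group elements, not conjugacy classes. So the next step is to bound, for each simple closed geodesic $\gamma$ of length at most $Cn$, the number of elements of $B_n$ lying in the conjugacy class it determines. An element $g \in B_n$ in this class is of the form $g = hg_0h^{-1}$ for a fixed representative $g_0$; since $|g|_{\Sigma_S} \le n$, one can take the conjugating element $h$ to have word length at most $n$ as well (up to adjusting constants), so the number of such $g$ is at most $|B_n|$, which is at most exponential in $n$. This naive bound is too weak; instead I would use that the centralizer of a nontrivial element of a surface group is cyclic, so that the conjugates of $g_0$ of word length at most $n$ inject into $B_n / Z(g_0)$, and more carefully that only boundedly-many (polynomially-many in $n$) cosets can contain an element of length at most $n$ — this is where a quasiconvexity/thin-triangles argument in the hyperbolic group $S$ is needed.

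Putting the pieces together: $f_{\mathcal{C}(\Sigma)}(n) = |B_n \cap \mathcal{C}(\Sigma)|$ is at most the number of simple closed geodesics of length at most $Cn$ (bounded by $P_\Sigma(Cn)$ by Theorem \ref{GrowthGeodesicSCC}, a polynomial in $n$) times the maximum number of elements of $B_n$ in a single such conjugacy class (which I claim is polynomial in $n$ by the centralizer argument above). The product of two polynomials in $n$ is a polynomial in $n$, giving the result.

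The main obstacle is the second step: controlling the number of length-$\le n$ representatives within a fixed conjugacy class. The clean way to handle it is to observe that if $g$ and $g' = w g w^{-1}$ both have word length at most $n$ and generate the same maximal cyclic subgroup's conjugates, then in the hyperbolic group $S$ the element $w$ can be chosen of length $O(n)$, and moreover two choices of $w$ differ by an element of the cyclic centralizer $Z(g)$; the number of cosets of $Z(g)$ meeting the ball of radius $O(n)$ grows at most polynomially because $Z(g)$ is undistorted (a quasigeodesic) in $S$. One should double-check that the hypothesis that $\Sigma$ is hyperbolic (so $S$ is word-hyperbolic with cyclic centralizers and undistorted cyclic subgroups) is exactly what makes this work, and that the argument is uniform over all simple closed geodesics of bounded length.
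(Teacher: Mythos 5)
The first half of your argument is the same as the paper's: an element of word length at most $n$ moves a basepoint of the universal cover by at most $Cn$, which bounds its translation length, hence the length of the unique geodesic representative of its free homotopy class, by $Cn$, and Theorem \ref{GrowthGeodesicSCC} then bounds the number of simple closed geodesics arising from $B_n$ by a polynomial. The problem is your second step. The key claim --- that only polynomially many cosets of the cyclic centralizer $Z(g_0)$ meet the ball of radius $O(n)$ because $Z(g_0)$ is undistorted --- is false; undistortion gives the opposite conclusion. Each coset $hZ(g_0)$ contains at most $O(R)$ elements of $B_R$ (two such elements differ by an element of $Z(g_0)$ of length at most $2R$, and an undistorted cyclic subgroup has only $O(R)$ such elements), so at least $|B_R|/O(R)$ distinct cosets meet $B_R$, and $|B_R|$ grows exponentially since $S$ is a nonelementary hyperbolic group. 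Concretely, for any fixed $g_0 \neq 1$ and any $h$ with $|h| \leq (n-|g_0|)/2$ one has $|hg_0h^{-1}| \leq n$, and $hg_0h^{-1} = h'g_0h'^{-1}$ only when $h^{-1}h' \in Z(g_0)$; so a single conjugacy class already contributes at least $|B_{(n-|g_0|)/2}|/O(n)$, i.e. exponentially many, elements of $B_n$. Thus the second factor in your product (representatives per class) is exponential, not polynomial, and no quasiconvexity or thin-triangles refinement can repair this: in a hyperbolic group the number of elements of $B_n$ lying in one fixed nontrivial conjugacy class genuinely grows exponentially, with roughly half the growth exponent.

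You were right that this is the delicate point. The paper's one-sentence proof only invokes the geodesic count and the quasi-isometry, i.e. it bounds the number of free homotopy classes (equivalently, $S$-conjugacy classes) of simple closed curves met by $B_n$, and does not address the passage from conjugacy classes to elements. Since ${\cal C}(\Sigma)$ is conjugation-invariant, the computation above shows that an elementwise polynomial bound cannot be obtained along these lines (it shows $|B_n \cap {\cal C}(\Sigma)|$ is at least about $|B_{n/2}|/O(n)$ once ${\cal C}(\Sigma)$ is nonempty); what the geodesic count really yields is polynomial growth of the number of conjugacy classes represented in $B_n \cap {\cal C}(\Sigma)$. So your proposal identifies the right issue, but the proposed fix does not close the gap; any correct argument has to work at the level of conjugacy classes (and the later use of the corollary adjusted accordingly) rather than recover a polynomial elementwise count.
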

This follows from Theorem \ref{GrowthGeodesicSCC} using the fact that the isotopy class of a simple closed curve on $\Sigma$ contains exactly one geodesic representative, and the $S$-equivariant quasi-isometry between the Cayley graph of $S$ and the universal cover of $\Sigma$.

\begin{proof}[Proof of Proposition \ref{SCCNotAType}] Let $S_0$ be the fundamental group of a minor subsurface of $\Sigma$. We saw that any primitive element $u$ of $S_0$ is such that $\tp^S(u) = \tp^{\F}(a_1)$. But the set of primitive elements of $S_0$ grows exponentially in $S_0$, and thus in $S$ as well. The set ${\cal C}(\Sigma)$ grows at most polynomially in $S$ by Theorem \ref{GrowthOfSimpleClosedCurves}. This proves the result. 
\end{proof}

Note that Proposition \ref{SCCNotAType} also implies:
\begin{cor} Let $S$ be the fundamental group of a closed hyperbolic surface $\Sigma$ which is not the connected sum of three or four projective planes. The set ${\cal C}(\Sigma)$ is not definable over the empty set.
\end{cor}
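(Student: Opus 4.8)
The plan is to deduce this directly from Proposition \ref{SCCNotAType}, using the elementary fact that a set definable over the empty set cannot separate two elements that realize the same complete type over $\emptyset$. So I would argue by contradiction: suppose ${\cal C}(\Sigma)$ is defined in $S$ by some formula $\psi(x)$ without parameters, so that for all $g \in S$ one has $g \in {\cal C}(\Sigma)$ if and only if $S \models \psi(g)$. Since $\psi$ has no parameters, for any two elements $g, g'$ of $S$ with $\tp^S(g) = \tp^S(g')$ we have $S \models \psi(g) \Leftrightarrow S \models \psi(g')$, and hence $g \in {\cal C}(\Sigma) \Leftrightarrow g' \in {\cal C}(\Sigma)$.

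Next I would exhibit two elements of $S$ realizing the same type over $\emptyset$, one lying in ${\cal C}(\Sigma)$ and one not. For the first, take an element $v$ of $S$ representing a non-separating simple closed curve on $\Sigma$; as recalled in the discussion preceding Proposition \ref{SCCNotAType}, $v$ is primitive in the fundamental group $S_0$ of a suitable minor subsurface of $\Sigma$, and since $S_0$ is elementarily embedded in $S$ by Theorem \ref{ConverseSela}, we get $\tp^S(v) = \tp^{\F}(a_1) = p_0$; and of course $v \in {\cal C}(\Sigma)$. For the second, Proposition \ref{SCCNotAType} furnishes an element $u \in S - {\cal C}(\Sigma)$ with $\tp^S(u) = \tp^{\F}(a_1) = p_0$. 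Now $\tp^S(u) = \tp^S(v)$ but $v \in {\cal C}(\Sigma)$ and $u \notin {\cal C}(\Sigma)$, which contradicts the conclusion of the previous paragraph. Hence no such formula $\psi$ can exist, and ${\cal C}(\Sigma)$ is not definable over $\emptyset$.

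There is no real obstacle beyond Proposition \ref{SCCNotAType} itself: the argument is just that the type $p_0$ of a primitive element is realized in $S$ both by elements of ${\cal C}(\Sigma)$ and by elements outside it, while a $\emptyset$-definable set is a union of realization sets of complete types over $\emptyset$. The only point requiring a little care is to confirm that $\Sigma$ actually carries a non-separating simple closed curve whose representative is primitive in a minor subsurface — but this is exactly what the paragraph before Proposition \ref{SCCNotAType} provides, under the standing hypothesis that $\Sigma$ is not the connected sum of three or four projective planes.
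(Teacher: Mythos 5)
Your argument is correct and is exactly the one the paper intends: Proposition \ref{SCCNotAType} provides a non-simple-closed-curve element realizing $p_0$, the discussion preceding it shows elements representing non-separating simple closed curves also realize $p_0$, and a set definable over $\emptyset$ cannot separate two realizations of the same complete type over $\emptyset$. No gaps; this matches the paper's (implicit) proof.
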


We showed that two elements representing non separating simple closed curves on a surface have the same type, and they are in the same orbit if and only if the corresponding curves are both one-sided, or both two-sided. To complete the picture regarding the types of simple closed curves, we prove

\begin{prop} Let $S$ be the fundamental group of a closed hyperbolic surface $\Sigma$ which is not the connected sum of three or four projective planes. Elements representing separating simple closed curves have the same type if and only if they are in the same orbit under $\Aut(S)$. 
\end{prop}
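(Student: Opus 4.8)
The implication from right to left is immediate, since an automorphism of $S$ preserves $\tp^S(\cdot)$. For the converse, suppose $u$ and $u'$ represent separating simple closed curves $\gamma$ and $\gamma'$ with $\tp^S(u)=\tp^S(u')$. Cutting along $\gamma$ writes $\Sigma=\Sigma_1\cup_\gamma\Sigma_2$ and $S=\pi_1(\Sigma_1)*_{\langle u\rangle}\pi_1(\Sigma_2)$, with each $\Sigma_i$ carrying a single boundary component; similarly $\gamma'$ gives $\Sigma=\Sigma'_1\cup_{\gamma'}\Sigma'_2$. By the change of coordinates principle, together with the Dehn--Nielsen--Baer theorem identifying $\mathrm{Out}(S)$ with the (extended) mapping class group of $\Sigma$, the curves $\gamma$ and $\gamma'$ lie in the same mapping class orbit --- equivalently $u$ and $u'$ lie in the same $\Aut(S)$-orbit, after possibly replacing $u'$ by $(u')^{-1}$, which has the same type --- as soon as the unordered pairs of homeomorphism types $\{[\Sigma_1],[\Sigma_2]\}$ and $\{[\Sigma'_1],[\Sigma'_2]\}$ agree. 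So it suffices to prove that $\tp^S(u)$ determines this unordered pair.

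The homeomorphism type of a surface with one boundary component is determined by its orientability and its genus (orientable genus, resp.\ number of cross-caps), and since $\chi(\Sigma_1)+\chi(\Sigma_2)=\chi(\Sigma)$ and the orientability of $\Sigma$ restricts that of the pieces, the pair $\{[\Sigma_1],[\Sigma_2]\}$ is pinned down by finitely much numerical data. The plan is to recover this data from the type via first-order invariants of $u$. For each $k\ge 1$ the formula $\phi_k(x):\ \exists x_1y_1\cdots x_ky_k\ \big(x=[x_1,y_1]\cdots[x_k,y_k]\big)$ holds of $u$ in $S$ exactly when $u$ is a product of at most $k$ commutators; in the non-orientable case one adds the formulas $\psi_{\ell,k}(x):\ \exists z_1\cdots z_\ell x_1y_1\cdots x_ky_k\ \big(x=z_1^2\cdots z_\ell^2[x_1,y_1]\cdots[x_k,y_k]\big)$. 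Since $\gamma$ bounds each $\Sigma_i$, the element $u$ satisfies the appropriate such formula with parameters equal to the genus data of $\Sigma_i$; hence the least $k$ (resp.\ the least pair $(\ell,k)$) for which $S$ satisfies the formula at $u$ is at most the minimum over the two sides of the corresponding genus datum. I would then prove this minimum is attained --- so that the minimal $k$ (resp.\ $(\ell,k)$) is a genuine function of $\tp^S(u)$ --- and that the resulting number is a complete invariant of the $\Aut(S)$-orbit of a separating simple closed curve, which finishes the proof. For the lower bound one uses that an extremal singular surface spanning a simple closed curve may be taken embedded, so that the commutator (resp.\ commutator-and-square) length of $u$ in $\pi_1(\Sigma)$ is computed by an embedded subsurface with boundary $\gamma$, none of which has genus below $\min$ of the two sides; alternatively one can feed two separating curves with distinct invariants into Theorem \ref{MainResult2} (with trivial $B$), getting either the desired automorphism or a hyperbolic floor structure on $S$ over a subgroup containing $\langle u\rangle$ or $\langle u'\rangle$, and then apply the classification of hyperbolic tower structures on surface groups (the proposition preceding Proposition \ref{SCCNotAType}, from \cite{PerinElementary}) --- such a subgroup is a free factor of $\pi_1$ of a minor subsurface, and condition (2) of the definition of minor subsurface bounds its Euler characteristic, reducing matters to finitely many explicit configurations.

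The main obstacle is precisely this last step: making the topological bookkeeping match the first-order picture. One must determine which unordered pairs $\{[\Sigma_1],[\Sigma_2]\}$ actually occur for separating curves on $\Sigma$, which numerical quantities separate their mapping class orbits, and --- most delicately --- check that the ``hyperbolic floor'' alternative of Theorem \ref{MainResult2} never identifies two separating curves with non-homeomorphic complementary pairs. The non-orientable case is the hardest: there one must track how the cross-caps split between the two sides, decide when $u$ is conjugate to $u^{-1}$, and dispose of the small-complexity surfaces not already excluded by the standing hypothesis that $\Sigma$ is not a connected sum of three or four projective planes.
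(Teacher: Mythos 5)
Your reduction (change of coordinates plus Dehn--Nielsen--Baer, so that it suffices to show $\tp^S(u)$ determines the unordered pair of homeomorphism types of the complementary pieces) is reasonable, but the step that carries all the mathematical weight is exactly the one you leave open, and it is not a routine verification. Concretely: (i) your extraction of the genus data assumes that the least $k$ (resp.\ least $(\ell,k)$) realizing $u$ as a product of commutators (resp.\ squares and commutators) is computed by an \emph{embedded} subsurface bounded by $\gamma$ -- i.e.\ that singular surfaces cannot beat embedded ones for a separating simple closed curve. That is a genuine quantitative claim with no proof or reference here. (ii) In the non-orientable case the pairs $(\ell,k)$ are not linearly ordered and interconvert (e.g.\ $[x,y]=x^2(x^{-1}y)^2y^{-2}$ is a product of three squares), so "the least pair $(\ell,k)$" is not even well-defined, and it is unclear that the set of achievable $(\ell,k)$ distinguishes an orientable complementary piece from a non-orientable one of the same Euler characteristic -- precisely the cross-cap bookkeeping you flag as unresolved. (iii) Your fallback through Theorem \ref{MainResult2} does not close the gap: the hyperbolic-floor alternative is not a contradiction for separating curves, since $\langle u\rangle$ can perfectly well lie in a free factor of the fundamental group of a minor subsurface (any separating curve of small genus does), so the classification of tower structures does not by itself reduce you to "finitely many explicit configurations".

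For comparison, the paper's proof avoids quantitative genus invariants altogether. From $\tp^S(c)=\tp^S(d)$ it transports only the single defining relation of the smaller-complexity piece, obtaining a morphism $f:S^1_c\to S$ with $f(c)=d$; it then plays $f$ against the $1$-acylindrical splitting $S=S^1_d*_{\langle d\rangle}S^2_d$ dual to the other curve, using Morgan--Shalen, pinching, and the complexity and boundary lemmas of \cite{PerinElementary} (Lemmas 3.10, 3.12, 5.22) to force $f$, and then an analogous map on the second piece, to be isomorphisms onto the corresponding pieces sending $c$ to $d$; the orientability mismatch you worry about is excluded there by Lemma 5.22 rather than by numerical invariants, and the automorphism is obtained by gluing the two isomorphisms over the amalgam. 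To salvage your route you would need to actually prove the embedded-extremal-surface statement and the completeness of the resulting invariant, which is substantial new work rather than bookkeeping.
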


\begin{proof}Let $c$ and $d$ be two elements of $S$ representing separating simple closed curves $\gamma$ and $\delta$ on $\Sigma$. Suppose that $\tp^S(c)=\tp^S(d)$. Denote by $\Sigma^1_c$ and $\Sigma^2_c$ (respectively $\Sigma^1_d$ and $\Sigma^2_d$) the connected components of $\Sigma - \gamma$ (respectively $\Sigma - \delta$). We denote the fundamental group of $\Sigma^i_{\epsilon}$ by $S^i_{\epsilon}$ for $i \in \{1,2\}$ and $\epsilon \in \{c,d\}$. We define the complexity of a surface with boundary $\Theta$ to be $k(\Theta)=(r, -n)$, where $r$ is the rank of its fundamental group as a free group and $n$ is the number of its boundary components, and we order complexities lexicographically.

Assume without loss of generality that $k(\Sigma^1_c) = \min_{i, \epsilon} \{k(\Sigma^i_{\epsilon})\}$. If $\Sigma^1_c$ is orientable, $S^1_c$ admits a presentation of the form $\langle c, a_1, \ldots a_{2r} \mid c= \prod_j [a_{2j-1}, a_{2j}] \rangle$. The formula $ \exists x_1, \ldots x_{2r} \; \; c= \prod_j [x_{2j}, x_{2j+1}] $ is in the type of $c$ in $S$, hence it is also satisfied by $d$. This gives us a morphism $f:S^1_c \to S$ sending $c$ to $d$. We can proceed in the same way for the non orientable case. 

Denote by $\Lambda_d$ the graph of group decomposition of $S$ dual to $\delta$, it corresponds to an amalgamated product of the form $S^1_d *_{\langle d \rangle} S^2_d$. Note that since the centralizer of $d$ in $S$ is exactly $\langle d \rangle$, the tree corresponding to $\Lambda_d$ is $1$-acylindrical, that is, distinct edges have stabilizers which intersect trivally. 

Suppose first that the kernel of the map $f$ does not contain any elements which represent simple closed curves on $\Sigma^1_c$. The surface group with boundary $S^1_c$ acts on the tree corresponding to $\Lambda_d$ via the map $f$, and boundary elements of $S^1_c$ are elliptic in this action. By Theorem III.2.6 of \cite{MorganShalen}, the splitting induced on $S^1_c$ by this action is dual to a non empty set of disjoint simple closed curves on $\Sigma^1_c$, and its vertex groups are fundamental groups of proper subsurfaces of $\Sigma^1_c$. Denote by $\hat{S}^1_c$ the fundamental group of one of these subsurface $\hat{\Sigma}^1_c$, and suppose without loss of generality that $f(\hat{S}^1_c) \leq S^1_d$. The map $f$ sends boundary elements of $\hat{S}^1_c$ to boundary elements of $S^1_d$, so by Lemma 3.10 in \cite{PerinElementary}, $f(\hat{S}^1_c)$ has finite index in $S^1_d$. By Lemma 3.12 in \cite{PerinElementary}, this implies that the complexity of $\hat{\Sigma}^1_c$, and thus that of $\Sigma^1_c$, is at least that of $\Sigma^1_d$, and if we have equality $f$ is an isomorphism $S^1_c \to S^1_d$. By minimality of $k(\Sigma^1_c)$, we do have equality. 

If the kernel of the map $f$ does contain elements representing simple closed curve on $\Sigma^1_c$, we proceed as in Section 5.7 of \cite{PerinElementary}. We pick an essential set of curves killed by $f$, and define the pinching map $\rho_{\cal C}$ 
and the graph of groups $\Gamma(S^1_c, {\cal C})$. 
The map $f$ then factors as $f' \circ \rho_{\cal C}$, and the kernel of $f'$ does not contain elements which represent simple closed curves on the surfaces of $\Gamma(S^1_c, {\cal C})$, including on the unique surface $\hat{\Sigma}^1_c$ whose fundamental group contains $\rho_{\cal C}(d)$. By repeating the argument above for the map $f'$ and the surface $\hat{\Sigma}^1_c$, we see that the complexity of $\hat{\Sigma}^1_c$ is at least that of $\Sigma^1_d$. But $k(\hat{\Sigma}^1_c) < k(\Sigma^1_c)$, this is a contradiction.  

Thus we have an isomorphism $f: S^1_c \to S^1_d$ sending $c$ to $d$. To finish the proof, it is enough to find an isomorphism $h: S^2_c \to S^2_d$ sending $c$ to $d$. Since $\chi(\Sigma^1_c) + \chi(\Sigma^2_c) = \chi(\Sigma)= \chi(\Sigma^1_d) + \chi(\Sigma^2_d)$, and all the surfaces involved have one boundary component, we see that we must have $k(\Sigma^2_c) = k(\Sigma^2_d)$. If $\Sigma^2_c$ and $\Sigma^2_d$ have the same orientability, we are done. Thus let us assume that $\Sigma^2_c$ is orientable and $\Sigma^2_c$ isn't. In this case, note that $\Sigma$ is non orientable so $\Sigma^1_c$ and $\Sigma^1_d$ must be non orientable.

As we did for $S^1_c$, we can find (using equality of the types of $c$ and $d$) a morphism $h: S^2_d \to S$ sending $c$ to $d$. Again we start by assuming no elements representing simple closed curves on $\Sigma^2_c$ are in the kernel of $h$. If $h(S^2_c)$ is not elliptic, by acylindricity of $\Lambda_d$ we can find a proper subsurface of $\Sigma^2_c$ whose fundamental group is sent by $h$ to a conjugate of $S^2_d$. But then its complexity must be at least that of $\Sigma^2_d$, while being strictly smaller than that of $\Sigma^2_c$, this is a contradiction. Thus $h(S^2_c)$ is elliptic in $\Lambda_d$, i.e. $h(S^2_c) \leq S^j_d$ for $j \in \{1,2\}$. If $\Ker(h)$ does contain elements representing simple closed curves, as before we can apply this argument to a surface obtained by pinching to get a contradiction.

The morphism $h$ sends non conjugate maximal boundary elements of $S^2_c$ to non conjugate maximal boundary elements of $S^j_d$, so by Lemma 5.22 of \cite{PerinElementary}, it is an isomorphism. But both $\Sigma^1_d$ and $\Sigma^2_d$ are non orientable, which contradicts orientability of $\Sigma^2_c$. 
\end{proof}

%
%


%

\bibliography{biblio}

\providecommand{\bysame}{\leavevmode\hbox to3em{\hrulefill}\thinspace}
\providecommand{\MR}{\relax\ifhmode\unskip\space\fi MR }
\providecommand{\MRhref}[2]{%
  \href{http://www.ams.org/mathscinet-getitem?mr=#1}{#2}
}
\providecommand{\href}[2]{#2}
\begin{thebibliography}{GL09b}

\bibitem[BF92]{BFCombinationTheorem}
Mladen Bestvina and Mark Feighn, \emph{{A combination theorem for negatively
  curved groups}}, J. Differential Geom. \textbf{35} (1992), 85--101.

\bibitem[Bow98]{Bowditch}
Brian Bowditch, \emph{{Cut points and canonical splittings of hyperbolic
  groups}}, Acta Math. \textbf{180} (1998), 145--186.

\bibitem[BS85]{BirmanSeriesSCC}
Joan Birman and Caroline Series, \emph{Geodesics with bounded intersection
  number on surfaces are sparsely distributed.}, Topology \textbf{24} (1985),
  217--225.

\bibitem[Cha]{Chatzidakis}
Zoé Chatzidakis, \emph{{Introduction~to~model~theory}}, Notes, available at
  \texttt{http://www.logique.jussieu.fr/~zoe/index.html}.

\bibitem[CK90]{ChangKiesler}
C.C. Chang and H.J. Kiesler, \emph{Model theory}, North-Holland Publishing Co.,
  1990.

\bibitem[DS99]{DunwoodySageev}
Martin Dunwoody and Michah Sageev, \emph{{JSJ-splittings for finitely presented
  groups over slender groups}}, Invent. Math. \textbf{135} (1999), 25--44.

\bibitem[FP06]{FujiwaraPapasoglu}
Koji Fujiwara and Panos Papasoglu, \emph{{JSJ-decompositions of finitely
  presented groups and complexes of groups}}, Geom. Funt. Anal \textbf{16}
  (2006), 70--125.

\bibitem[GL08]{GuirardelLevittTreeOfCylinders}
Vincent Guirardel and Gilbert Levitt, \emph{{Tree of cylinders and canonical
  splittings}}, 2008.

\bibitem[GL09a]{GuirardelLevittJSJI}
\bysame, \emph{{JSJ decompositions: definitions, existence, uniqueness. I: The
  JSJ deformation space}}, 2009.

\bibitem[GL09b]{GuirardelLevittJSJII}
\bysame, \emph{{JSJ decompositions: definitions, existence, uniqueness. II:
  Compatibility and acylindricity}}, 2009.

\bibitem[Gui08]{GuirardelRTrees}
Vincent Guirardel, \emph{{Actions of finitely generated groups on $\R$-trees}},
  Ann. Inst. Fourier \textbf{58} (2008), 159--211.

\bibitem[KM06]{KharlampovichMyasnikov}
Olga Kharlampovich and Alexei Myasnikov, \emph{{Elementary theory of free
  nonabelian groups}}, J. Algebra \textbf{302} (2006), 451--552.

\bibitem[Mar02]{MarkerModelTheory}
David Marker, \emph{Model theory: an introduction}, Graduate Texts in
  Mathematics, vol. 217, Springer, 2002.

\bibitem[Mir08]{MirzakhaniGrowthSCC}
Maryam Mirzakhani, \emph{Growth of the number of simple closed geodesics on a
  hyperbolic surface}, Ann. of Math. \textbf{168} (2008), 97--125.

\bibitem[MS84]{MorganShalen}
John~W. Morgan and Peter~B. Shalen, \emph{{Valuations, trees, and degenerations
  of hyperbolic structures I}}, Ann. of Math. \textbf{120} (1984), 401--476.

\bibitem[Nie03]{NiesF2}
Andre Nies, \emph{Aspects of free groups}, J. Algebra \textbf{263} (2003),
  119--125.

\bibitem[NS90]{NadelStavi}
M.~Nadel and J.~Stavi, \emph{On models of the elementary theory of $(\z,+,1)$},
  Journal of Symbolic Logic (1990), 1--20.

\bibitem[Per08]{ThesisPerin}
Chlo\'e Perin, \emph{Elementary embeddings in torsion-free hyperbolic groups},
  Ph.D. thesis, Universit\'e de Caen Basse-Normandie, October 2008.

\bibitem[Per09]{PerinElementary}
Chloé Perin, \emph{Elementary embeddings in torsion free hyperbolic groups},
  arXiv:0903.0945v1 [math.GR], 2009.

\bibitem[Pil96]{PillayStability}
Anand Pillay, \emph{Geometric stability theory}, Oxford University Press, 1996.

\bibitem[Pil08]{PillayForking}
\bysame, \emph{Forking in the free group}, J. Inst. Math. Jussieu \textbf{7}
  (2008), 375--389.

\bibitem[Pil09]{PillayGenericity}
\bysame, \emph{On genericity and weight in the free group}, Proc. Amer. Math.
  Soc. \textbf{137} (2009), 3911--3917.

\bibitem[Poi01]{PoizatStableGroups}
Bruno Poizat, \emph{Stable groups}, Mathematical Surveys and Monographs,
  vol.~87, AMS, 2001.

\bibitem[Riv01]{RivinSCC}
Igor Rivin, \emph{Simple curves on surfaces}, Geom. Dedicata \textbf{87}
  (2001), 345--360.

\bibitem[RS94]{RipsSelaHypI}
Eliyahu Rips and Zlil Sela, \emph{{Structure and rigidity in hyperbolic groups
  I}}, Geom. Funct. Anal. \textbf{4} (1994), 337--371.

\bibitem[RS97]{RipsSelaJSJ}
\bysame, \emph{{Cyclic splittings of finitely presented groups and the
  canonical JSJ decomposition}}, Ann. of Math. \textbf{146} (1997), 53--104.

\bibitem[Sela]{SelaPrivate}
Zlil Sela, {Private communication}.

\bibitem[Selb]{Sel7}
\bysame, \emph{{Diophantine geometry over groups VII: The elementary theory of
  a hyperbolic group}}, Proceedings of the LMS, {to appear}.

\bibitem[Selc]{SelaStability}
\bysame, \emph{{Diophantine geometry over groups VIII: Stability}}, {preprint,
  available at \texttt{http://www.ma.huji.ac.il/~zlil/}}.

\bibitem[Sel97]{SelaHypII}
\bysame, \emph{{Structure and rigidity in (Gromov) hyperbolic groups and
  discrete groups in rank 1 Lie groups II}}, Geom. Funct. Anal. \textbf{7}
  (1997), 561--593.

\bibitem[Sel01]{Sel1}
\bysame, \emph{{Diophantine geometry over groups {$\rm I$}. Makanin-Razborov
  diagrams}}, Publ. Math. Inst. Hautes Études Sci. \textbf{93} (2001), 31--105.

\bibitem[Sel06]{Sel6}
\bysame, \emph{{Diophantine geometry over groups VI: The elementary theory of
  free groups}}, Geom. Funct. Anal. \textbf{16} (2006), 707--730.

\bibitem[Ser83]{SerreTrees}
Jean-Pierre Serre, \emph{{Arbres, amalgames, $SL_2$}}, Astérisque \textbf{46}
  (1983).

\bibitem[Skl]{SklinosGenericType}
Rizos Sklinos, \emph{On the generic type of the free group}, to appear.

\bibitem[Wil06]{WiltonThesis}
Henry Wilton, \emph{Subgroup separability of limit groups}, Ph.D. thesis,
  Imperial College, London, 2006, available at
  \texttt{http://www.math.utexas.edu/users/henry.wilton/thesis.pdf}.

\end{thebibliography}

\end{document}